\documentclass[12pt]{amsart}
\usepackage{mathrsfs}
\usepackage{eucal}
\usepackage{xcolor}
\usepackage[all]{xy}
\usepackage{amssymb,amsmath}
\usepackage{enumerate}
\usepackage{relsize}
\usepackage{comment}
\usepackage{listings}
\usepackage{mathtools}

\newdir^{ (}{{}*!/-8pt/@^{(}}% % A new tip style for right hook arrows.
\usepackage{hyperref}
\hypersetup{
	colorlinks=true,
	linkcolor=blue,
	filecolor=blue,
	citecolor = blue,
	urlcolor=blue,
}

\date{February 3, 2026}
\calclayout
\newtheorem{dummy}{anything}[section]
\newtheorem{theorem}[dummy]{Theorem}
\newtheorem*{thma}{Theorem A}
\newtheorem*{thmb}{Theorem B}
\newtheorem*{thmc}{Theorem C}
\newtheorem*{thme}{Theorem E}

\newtheorem*{cord}{Corollary D}

\newtheorem{lemma}[dummy]{Lemma}
\newtheorem{proposition}[dummy]{Proposition}
\newtheorem{corollary}[dummy]{Corollary}
\newtheorem{conjecture}[dummy]{Conjecture}
\theoremstyle{definition}%%Change Theoremstyle
\newtheorem{definition}[dummy]{Definition}
  \newtheorem{example}[dummy]{Example}
  
  \newtheorem{remark}[dummy]{Remark}
   
    \newtheorem*{question}{Question}
    \newtheorem*{oozeh}{Oozing Conjecture}
    
  \newtheorem*{acknowledgement}{Acknowledgement}

\newcommand
{\eqncount}{\setcounter{equation}{\value{dummy}}%
\addtocounter{dummy}{1}}
\newcommand{\cB}{\mathcal B}

\newcommand{\cI}{\mathscr I}

\newcommand{\bF}{\mathbf F}

\newcommand{\bZ}{\mathbb Z}
\newcommand{\bQ}{\mathbf Q}

\newcommand{\bR}{\mathbb R}

\newcommand{\bbL}{\mathbb L}

\newcommand{\bbQ}{\mathbb Q}

\DeclareMathOperator{\Hom}{Hom}
\DeclareMathOperator{\Image}{Im}

\DeclareMathOperator{\invlim}{\lower 6pt\hbox{$\stackrel{\displaystyle{\lim}}{\leftarrow}$}}
\DeclareMathOperator{\dirlim}{\lower 6pt\hbox{$\stackrel{\displaystyle{\lim}}{\rightarrow}$}}

\newcommand{\mmatrix}[4]{\left (\vcenter
{\xymatrix@C-2pc@R-2pc{#1&#2\\#3&#4} }
\right )}

\newcommand{\cy}[1]{\bZ/{#1}}
\newcommand{\la}{\langle}
\newcommand{\ra}{\rangle}

\newcommand{\vv}{\, | \,}
\newcommand{\bd}{\partial}

\def\Ztwo{\bZ_{(2)}}
\def\Zhat{\widehat\bZ_2}

\DeclareMathOperator{\Mod}{(mod)}

\DeclareMathOperator{\Wh}{Wh}

\newcommand{\Zpi}{\bZ\pi}

\newcommand{\wkappa}{\widetilde\kappa}
\newcommand{\trf}{{tr{\hskip-2pt}f}}
\newcommand{\Rpi}{R\pi}
\newcommand{\Rhat}{\widehat R_2}
\newcommand{\wkappas}{\tilde\kappa^s}

\newcommand{\wx}{\widehat x}
\newcommand{\wz}{\widehat z}

\usepackage[symbol]{footmisc}

\newcommand{\wpi}{\widetilde\pi}
\newcommand{\ab}{\textup{ab}}

\begin{document}

%\title{Closed manifold surgery obstructions revisited}
\title{Closed manifold surgery obstructions and the Oozing Conjecture}

\author{Ian Hambleton}

\address{Department of Mathematics, McMaster University,  Hamilton, Ontario L8S 4K1, Canada}

\email{hambleton@mcmaster.ca }

\author{\"Ozg\"un \"Unl\"u}
\address{Department of Mathematics,
Bilkent University,
Ankara, Turkey 06800}
\email{unluo@fen.bilkent.edu.tr}

\thanks{Research partially supported by NSERC Discovery Grant A4000. The second author was supported by the Fields Institute for Research in Mathematical Sciences, under a Fields Research Fellowship held at McMaster in July 2024.
}

%s\subjclass[2020]{Primary 57R67, 20J06; Secondary 18F25}

\begin{abstract}
We  complete the description of surgery obstructions up to  homotopy equivalence for closed oriented manifolds with finite fundamental group. New examples are presented of non-trivial obstructions for Arf invariant product formulas in codimensions $\geq 4$, which give counterexamples to the well-known ``Oozing Conjecture'' from the 1980's. 
\end{abstract}

\maketitle

\section{Introduction}
\label{sect: introduction}
The surgery exact sequence \cite[Chapter 9]{Wall:1970} provides a framework for the classification of manifolds and their automorphisms. 
In this theory, the relationship between the topology of manifolds and homotopy theory is analysed by studying certain  \emph{assembly maps}  in order to compute the surgery obstructions of degree 1 normal maps $(f, b)\colon N^n \to M^n$, where the domain and range are \emph{closed manifolds} (see \cite{Quinn:1970}, \cite{Ranicki:1992}, \cite{Hambleton:2004}). These obstructions take values in algebraically defined \emph{surgery obstruction groups} $L_n(\Zpi,w)$, depending  only on the fundamental group $\pi:= \pi_1(M)$, the orientation character $w\colon \pi \to \{\pm 1\}$, and  $n  = \dim M$ (mod 4). The image of the $L$-theory assembly map is the subgroup of  $L_n(\Zpi,w)$ consisting of the closed manifold surgery obstructions (see \cite[Theorem 13B.3]{Wall:1970}, \cite[\S 2]{Hambleton:2012}).  

In this paper we will restrict to surgery problems for oriented manifolds with finite fundamental group. We note that the celebrated  Farrell-Jones conjectures \cite{Farrell:1993} provide the best information to date about
the $L$-theory assembly maps for infinite fundamental groups, but give no information for finite groups (see \cite{Luck:2020} for a recent survey on progress towards the Farrell-Jones conjectures, and \cite{Weinberger:1982} for a striking example).

\smallskip
The surgery obstructions up to simple homotopy equivalence for closed manifolds with finite fundamental group $\pi$ are determined by transfer to the $2$-Sylow subgroup (see \cite{Hambleton:1988}). The cohomological formula of Wall \cite{Wall:1976} and Taylor-Williams \cite{Taylor:1979} reduces the computation of the surgery obstruction map
$$[X, G/TOP] \to L^s_n(\bZ \pi)$$
to some characteristic class information and the determination of two families of universal homomorphisms:
$$\cI^s_j(\pi)\colon H_j(\pi; \Ztwo) \to L^s_j(\bZ \pi)_{(2)}$$
and
$$\kappa^s_j (\pi)\colon H_j(\pi;\cy 2) \to L^s_{j+2}(\bZ \pi)_{(2)}$$
defined for all $j \geq 0$. Note that the calculation of these homomorphisms is equivalent to determining the $L$-theory assembly map
$$ A_{\pi} \colon B\pi^+ \wedge \bbL_0(\bZ) \to \bbL_0^s(\Zpi)$$
for oriented manifolds with finite fundamental group (see \cite[\S 1]{Hambleton:1988}).

\smallskip
We recall that for oriented manifolds  $\cI^s_0$ may be identified with the ordinary signature, and $\kappa^s_0$ with the ordinary Arf-Kervaire invariant (defined by projection to $\pi=1$). From now on, we assume that all $L$-groups are equipped with the standard oriented involution ($g \mapsto g^{-1}$) and localized at 2. By \cite[Theorem 7.4]{Wall:1974} this is no loss of information,  since the torsion in $L$-theory is all $2$-primary for $\pi$ a finite group.

More generally, let $U \subseteq Wh(\Zpi)$ be an involution invariant subgroup. Then one can consider the obstructions to surgery up to homotopy equivalence with torsions lying in $U$. The most important examples are $U= \{0\}$ (simple), $U = SK_1(\Zpi)$ (weakly simple), and $U = Wh(\Zpi)$, with corresponding obstruction groups $L^s$, $L'$, and $L^h$ respectively. 

\smallskip
The closed manifold surgery obstructions in $L^U_*(\bZ \pi)$, for surgery up to \emph{weakly simple} homotopy equivalence, were substantially determined 
in \cite[Theorem A]{Hambleton:1988}, for any Whitehead torsion decoration
$$SK_1(\bZ \pi) \subseteq U \subseteq \Wh(\bZ \pi).$$
More precisely, all the weakly simple universal homomorphisms were computed except for $\kappa^U_4$. In particular, $\cI^{U}_j = 0$, for $j>0$ (see 
\cite[Theorem 2.1]{Hambleton:1988} for a slightly sharper result).

\smallskip
For surgery up to simple homotopy equivalence, we do not yet have a general computation of the universal homomorphisms. Taylor and Williams \cite[Theorem 4.1]{Taylor:1980} generalized work of Stein \cite{Stein:1976} on product formulas for surgery obstructions to study the universal homomorphisms for finite fundamental groups with \emph{special} $2$-Sylow subgroups (cyclic, dihedral, semi-dihedral and quaternion), and obtained most of the information for abelian $2$-Sylow subgroups. 

\smallskip
We  address some of the open questions concerning the universal homomorphisms. Our main results are:
\begin{enumerate}
\item A complete determination of the surgery assembly map in terms of group homology, for surgery up to homotopy equivalence (Theorem E).
\item A complete description of simple closed manifold surgery obstructions for abelian or basic  $2$-groups (Theorem A).
\item A description of the image of simple closed manifold surgery obstructions under the change of coefficients map $L^s_*(\bZ \pi) \to L^s_*(\Zhat \pi)$ (Theorem B and Theorem C).
\item Examples showing that $\ker \kappa^h_2 \neq \ker \kappa^s_2$ (Corollary D).
\item   An example showing that the homomorphism $\kappa^h_4$ can be non-zero (see Corollary \ref{cor:ooze_example}). 
\end{enumerate}

\begin{remark} These results are obtained by further exploiting a key ingredient in our joint paper \cite{Hambleton:1988}, namely a lifting of the $\{\kappa^s_*\}$ through 
the $L$-theory of a quadratic extension ring $R = \bZ[\varepsilon]$, with $\varepsilon = (1+ \sqrt{5})/2$. There is a natural factorization
$$\xymatrix@C-5pt{H_j (\pi;\cy 2) \ar[dr]_{\wkappas_j}\ar[rr]^{\kappa^s_j} && 
L_{j+2}^s(\Zpi)_{(2)}\\
& L_j^s(R\pi)\ar[ur]_{\trf}&
}$$
where $\trf$ is the induced transfer map on $L$-theory (see \cite[Theorem 1.16]{Hambleton:1988}). 
 \end{remark}
 
 Two important examples, due to Morgan-Pardon (unpublished, \cite[Theorem 4.6]{Milgram:1986}) and Cappell-Shaneson \cite{Cappell:1979}, showed the non-triviality of certain product formulas
$$ (f\times 1, b\times 1)\colon K^{4n+2} \times M^k \to S^{4n+2} \times M^k$$
for  the surgery obstructions in $L^h_{k+2}(\bZ \pi)$. In these products,  $(f,b)\colon K^{4n+2}  \to S^{4n+2}$ denotes a degree 1 normal map with domain the closed topological Kervaire manifold with Arf-Kervaire invariant 1, and $M^k$ is a closed, oriented manifold with $\pi_1(M, x_0) =\pi$. We recall that these obstructions vanish if $k \equiv 0 \pmod 4$ and the Euler characteristic of $M$ is even (see \cite[Theorem (0.2)(i)]{Hambleton:1988}). 

\smallskip
More generally, one can also  consider the oriented surgery obstructions of twisted products of  the Arf invariant normal map $(f,b)\colon K^{4n+2}  \to S^{4n+2}$ with  non-orientable manifolds    (see \cite[\S 4]{Hambleton:2012}).

 The surgery obstruction up to homotopy equivalence of such a (twisted) product normal map, modulo the image  of the ordinary Arf-Kervaire invariant, is sometimes called a \emph{codimension $k$ Arf invariant}. A well-known conjecture from the 1980's stated:
\begin{oozeh} The  \emph{codimension $k$} Arf invariants are trivial  for $k\geq 4$.
\end{oozeh}
The Morgan-Pardon example shows that $\kappa_2^h \neq 0$, and  the Cappell-Shaneson example shows that $\kappa_3^h \neq 0$. In general, 
the oozing conjecture is equivalent (by the cohomological formula for surgery obstructions) to the vanishing of all the homomorphisms $\kappa^h_j$,  for $j\geq 4$ (see \cite[Theorem A]{Hambleton:1988}).  However, our Theorem E below and the example in Corollary \ref{cor:ooze_example} shows that the oozing conjecture in this form is false.

\medskip
There is a sharper version of this conjecture  (also disproved by Corollary \ref{cor:ooze_example}), concerning the oriented surgery obstructions up to \emph{simple} homotopy equivalence of  twisted products with the Arf invariant normal map $(f,b)\colon K^{4n+2}  \to S^{4n+2}$.  The vanishing of these obstructions in $L_{k+2}^s(\Zpi)$ in codimension $k \geq 4$ is a stronger statement, equivalent to the vanishing of the homomorphisms $\kappa^s_j$,  for $j\geq 4$. Here is a natural question.

\begin{question}\label{question:kappas} Does there exist a finite $2$-group $\pi$ such that  $\kappa^s_4(\pi) \neq 0$, but  $\kappa^h_4(\pi)= 0$ ?
\end{question} 
In Section \ref{subsec:codim_two_ex}  we present an approach to answer this question positively through further GAP computations, and provide an example in Proposition \ref{prop:kappas_ex2}.

\subsection{Results for basic or abelian $2$-groups}
Here are the results for finite abelian $2$-groups and the \emph{basic} $2$-groups (cyclic, quaternion, dihedral, and semi-dihedral). These cases are important as building blocks for determining the surgery obstructions over more general finite $2$-groups (compare \cite[Theorem 0.3]{Hambleton:1988}).

\begin{thma}[\cite{Taylor:1980}, \cite{Hambleton:1988}]
Suppose that $\pi$ is an abelian or basic finite $2$-group. Then
\begin{enumerate}
\item $\cI^s_j = 0$, for $j >0$;
\item $\kappa^s_0$ and $\kappa^s_1$ are (split) injective;
\item $\kappa^s_2 = 0$ for the basic $2$-groups;
\item $\kappa^s_j = 0$, for $j >2$, unless $\pi$ is quaternion;
\item If $\pi$ is quaternion, $\kappa^s_3$ is injective and $\kappa_j^s = 0$, for $j \geq 4$.
\end{enumerate}
Moreover, if $E\subseteq \pi$ denotes the subgroup of an abelian $2$-group $\pi$ consisting of elements of order $\leq 2$, then the sequence
$$H_2(E;Z/2) \to H_2(\pi;\bZ/2) \xrightarrow{\kappa^s_2} L^s_0(\bZ \pi)$$
is exact. In addition, $\ker \kappa^s_{2}(\pi) = \ker \kappa^h_{2}(\pi)$ and their images are isomorphic.
\end{thma}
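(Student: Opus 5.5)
The plan is to assemble Theorem A from the factorization $\kappa^s_j = \trf\circ\wkappas_j$ through $L^s_j(R\pi)$, together with the product-formula computations of Taylor--Williams \cite[Theorem 4.1]{Taylor:1980} for special $2$-groups. Most of the statement is already in those sources. The work is to organize the citations and check the two points that are not stated there explicitly: the exactness statement for abelian $\pi$, and the comparison $\ker\kappa^s_2 = \ker\kappa^h_2$.

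\textbf{Parts (1), (2) and (5).} For (1), I would use \cite[Theorem 2.1]{Hambleton:1988}, which gives $\cI^U_j=0$ for $j>0$. For abelian or basic $\pi$ the group $SK_1(\Zpi)$ vanishes, or at least its involution-invariant contribution does (Oliver). Hence $L^s=L'$ in these cases, and the weakly simple result applies with $U=\{0\}$.

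For (2), $\kappa^s_0$ is split by projection to $\pi=1$ and the ordinary Arf invariant. $\kappa^s_1$ is detected by a codimension one (Browder--Livesay / determinant) invariant on $H_1(\pi;\cy2)=\pi^{ab}/2$. Concretely, for each cyclic quotient $\pi\to C_{2^k}$ I would compose with the transfer to the index-two subgroup, where the Arf invariant of the codimension one splitting detects the class. This gives a splitting map, since $H_1$ of the quotient is hit isomorphically.

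Parts (3) and (5), and the quaternion part of (4), I would quote directly from Taylor--Williams. There the $L^s$-groups of basic groups are explicit, and $\wkappas_j$ lands in $L^s_j(R\pi)$. For $j\ge 3$ the relevant $L^s_{j}(R\pi)$ summands receiving the image vanish, except in the quaternion case, where $\kappa^s_3$ is detected by a transfer to a quaternion subgroup of order $8$.

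\textbf{Part (4) for abelian $\pi$.} I would use naturality in $\pi$ and the Künneth decomposition of $H_j(\pi;\cy2)$ for $\pi=\prod C_{2^{k_i}}$. Each class is a sum of images of cross products from subgroups with at most $j$ cyclic factors. By the product formula, the image of such a cross product is the obstruction of a product of the Kervaire problem with a lens-space-type manifold. This obstruction vanishes when the manifold has even Euler characteristic or is a boundary in the appropriate sense. So the reduction is to showing that $\wkappas_j$ vanishes on cross products of $\ge 3$ one-dimensional classes, and on $\beta$-type classes of degree $\ge3$. For these I would check directly that the image in $L^s_j(R\pi)$ factors through $L_j(R)$ with trivial multisignature.

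\textbf{Exactness and the kernel comparison.} For the exactness of $H_2(E)\to H_2(\pi)\to L^s_0$, I would first split $H_2(\pi;\cy2)$ into two kinds of classes. The first are exterior products $x_i\times x_j$ of degree one classes. The second are the Bockstein-type classes coming from $\Lambda^2$ and from $\mathrm{Tor}$. The image of $H_2(E)$ is spanned by products of elements of order exactly $2$, that is, classes involving only generators of maximal divisibility in $\pi/\pi^2$. I would show these map to zero because they come from an elementary abelian group, where $\kappa^s_2$ vanishes on restricted products.

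The converse is the main obstacle. One must show that the remaining classes have independent images in $L^s_0(\Zpi)$. I would attempt this by computing $\wkappas_2$ into $L^s_0(R\pi)$ and then detecting with characters, using $L^s_0(\widehat R_2\pi)$ and the reduced Arf/discriminant invariants on cyclic quotients of order $\ge4$.

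Finally, for $\ker\kappa^s_2=\ker\kappa^h_2$, I would use the Rothenberg sequence. The map $L^s_0\to L^h_0$ has kernel the image of $\hat H^1(\Wh)$, and this is detected by torsion in units of $R\pi$. I would check that the images computed above avoid this kernel, which also gives the isomorphism of images.
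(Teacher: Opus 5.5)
The genuine gap is in the converse half of the exactness statement, and the detection scheme you propose cannot close it. Passing to cyclic quotients detects nothing in the image of $\kappa^s_2$. By naturality $p_*\kappa^s_2(x)=\kappa^s_2(p_*x)$, and $\kappa^s_2=0$ for cyclic groups by part (3) itself. The $2$-adic route fails as well. For abelian $\pi$ the Milgram--Oliver diagram gives $\rho_2\circ\kappa^s_2=0$ (the corollary following Theorem~\ref{thm:kappatwo}), so the entire image of $\kappa^s_2$ lies in $\ker\rho_2$. Whatever the $\Rhat\pi$ invariants see of $\wkappas_2$ therefore says nothing about $\trf\circ\wkappas_2$.

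You have also misplaced the Bockstein-type classes. The $\mathrm{Tor}$ summand of $H_2(\pi;\cy 2)$ is the $2$-torsion of $\pi$, and $C_2\subset C_{2^k}$ induces an isomorphism on $H_2(-;\cy 2)$. So these classes lie in the image of $H_2(E;\cy 2)$ and are killed by $\kappa^s_2$. Let $\alpha_i$ and $\beta_j$ denote the degree one classes of the cyclic factors of order $2$ and of order $\ge 4$ respectively. Modulo $\Image H_2(E;\cy 2)$, what remains is $x=\sum a_{ij}\,\alpha_i\otimes\beta_j+\sum b_{kl}\,\beta_k\otimes\beta_l$. These are reductions of integral classes, so they vanish in every cyclic quotient because $H_2(C;\bZ)=0$.

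The missing ingredient is an \emph{integral} non-vanishing input: the Morgan--Pardon example. It shows that $\kappa'_2(\cy 2\times\cy 4)$ is non-zero on the reduction of the generator of $H_2(\cy 2\times\cy 4;\bZ)\cong\cy 2$. By Wall's formula \eqref{eq:lgroup}, the torsion of $L'_0(\bZ[\cy 2\times\cy 4])$ is exactly $\cy 2$. Each coefficient $a_{ij}$, $b_{kl}$ is then detected by a projection $p_\gamma\colon\pi\to\cy 2\times\cy 4$ that kills the cyclic factors outside the chosen pair $\gamma$. This is how the paper proves $\ker\kappa'_2=\Image H_2(E;\cy 2)$.

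The remaining steps are asserted rather than argued. The inclusion $\Image H_2(E;\cy 2)\subseteq\ker\kappa^s_2$ needs $\kappa^s_2(E)=0$ for elementary abelian $E$. The paper gets this from three facts: $SK_1(\bZ E)=0$; $H_2(E;\cy 2)$ is generated by classes induced from subgroups of rank $\le 2$; and $L'_0$ is torsion free for cyclic groups and for $C_2\times C_2$.

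With that in hand, the kernel comparison is formal and needs no analysis of units of $R\pi$:
$$\ker\kappa^s_2\subseteq\ker\kappa'_2=\Image H_2(E;\cy 2)\subseteq\ker\kappa^s_2 .$$
Moreover $\ker\kappa'_2=\ker\kappa^h_2$ because $L'_0(\Zpi)\to L^h_0(\Zpi)$ is injective. The relevant kernel of $L^s_0(\Zpi)\to L^h_0(\Zpi)$ is the image of $H^1(SK_1(\Zpi))$. Routing through $E$ also avoids any reliance on $L^s=L'$ for general abelian $\pi$.

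Finally, your treatment of part (4) for abelian $\pi$ rests on a false principle. Even Euler characteristic forces vanishing only in codimension $\equiv 0\pmod 4$; the Cappell--Shaneson example is a product with an odd-dimensional manifold and has non-zero obstruction. Nor do the lifted $\wkappas_j$ vanish: for $\pi=C_2$, commutativity of diagram \eqref{diag:threenine} forces $\tau\circ\rho_2\circ\wkappa'_4\neq 0$. The vanishing holds only after applying $\trf$, which is what the paper extracts from \cite[Theorem 6.8]{Hambleton:1988}.
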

\noindent
{\bf Notation}:  we write $H^n(A): = \widehat H^n(\cy 2; A)$ for Tate cohomology with coefficients in a $\cy 2$-module $A$.
\begin{remark} These results refine those proved in Taylor-Williams \cite{Taylor:1980}: we will show how they can be derived from the information in \cite{Hambleton:1988}. The difference between $\kappa_*^s$ and $\kappa_*^h$ is partly measured by the intermediate comparison sequence
$$ \dots \to H^{n+1}(SK_1(\Zpi)) \xrightarrow{\ \bd \ } L^s_n(\Zpi) \to  L'_n(\Zpi)  \to H^{n}(SK_1(\Zpi))  \to \dots$$
where $L'_*(\Zpi)$ denotes the \emph{weakly simple} $L$-theory with torsion decorations in 
$\Wh'(\Zpi) := \Wh(\Zpi)/SK_1(\Zpi)$. 
The passage from $L_*'$ to $L^h_*$ is given by the exact sequence
$$0 \to L'_{2k}(\Zpi) \to L^h_{2k}(\Zpi) \to \Wh'(\Zpi)\otimes \cy 2 \to  L'_{2k-1}(\Zpi) \to  L^h_{2k-1}(\Zpi) \to 0$$
due to Wall \cite[p.~78]{Wall:1976a}. In particular, this shows that in even dimensions $\kappa_{2j}^h$ determines $\kappa'_{2j}$ for any finite group (meaning that $\ker \kappa'_{2j} = \ker \kappa^h_{2j}$ and their images are isomorphic). 

The last part of Theorem A implies that a stronger statement holds for $\pi$ a finite abelian $2$-group, namely that the image of $\kappa^s_2(\pi)$ has zero intersection with the image of the natural map
$\bd\colon H^1(SK_1(\Zpi)) \to L_0^s(\Zpi)$.
\end{remark}

\subsection{Results for the $2$-adic homomorphisms}

Next we concentrate on the compositions:
$$\bar\kappa^s_j\colon H_j(\pi;\cy 2) \to L^s_{j+2}(\bZ \pi) \xrightarrow{\ \rho_2\ } L^s_{j+2}(\Zhat \pi)$$
of the universal homomorphisms under the map induced by $\rho\colon \bZ \to \Zhat$ into the 2-adic $L$-groups. Recall that calculations of the surgery obstructions are based on separating the $2$-adic information from the integral information via an exact sequence of the form
$$ \dots \to L_{j+1}(\Zpi \to \Zhat\pi) \to  L_{j}(\Zpi) \to L_j(\Zhat\pi) \to \dots$$
with appropriate torsion decorations (see Wall \cite[1.2]{Wall:1976a}). The relative $L$-groups have better induction and detection properties (see \cite[2.1]{Wall:1976a}, \cite{Hambleton:1990}), which we can exploit to obtain information about the integral $\kappa_*$-homomorphisms from their $2$-adic images (see Lemma \ref{lem:threethree}).  In the next result let $s_r \colon H_{2^{r+2}}(\pi; \cy 2) \to H_4(\pi; \cy 2)$, for $r >0$, denote the Hom-dual of the iterated squaring maps in cohomology.

\begin{thmb} For any finite $2$-group $\pi$, the 2-adic universal homomorphisms 
$$\bar\kappa^s_j\colon H_j(\pi;\cy 2) \to L^s_{j+2}(\Zhat \pi)$$
for $j \neq 2$ are determined as follows:
\begin{enumerate}
\item $\bar\kappa^s_0$ and $\bar\kappa^s_1$ are split injective. 
\item If $j>0$, then $\bar\kappa_j^s = 0$ provided $j \neq 2^l$, for $l \geq 1$. 
\item $\bar\kappa^s_{2^{r+2}} = \bar\kappa^s_4\circ s_r$ for $r \geq 0$, and $\bar\kappa^s_i$ vanishes on the image of integral homology for $i \geq 4$.
\end{enumerate}
Moreover, the image of $\kappa'_{j}\colon H_{j}(\pi;\cy 2) \to L'_{j+2}(\Zpi)$, for $j \equiv 0\pmod 2$,  is detected by the natural map  
$L_{j+2}'(\Zpi) \to L_{j+2}'(\Zhat\pi)\oplus L_{j+2}'(\Zpi^{\ab})$.
\end{thmb}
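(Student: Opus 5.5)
Write $\bar\kappa^s_j=\rho_2\circ\kappa^s_j$. My plan is to use the factorization of the Remark, $\kappa^s_j=\trf\circ\wkappas_j$, and to pass everything to $2$-adic coefficients. For finite $2$-groups the ring $\Zhat\pi$ is local, and the $2$-adic ring $\Rhat$ (with $\varepsilon$ adjoined) is an unramified quadratic extension. The $2$-adic $L$-groups are then computable from the residue field $\bF_4$ and the trivial-group case: $L^s_*(\Zhat\pi)\cong L^s_*(\Zhat)$ up to the $SK_1$/torsion corrections, by Wall's reduction \cite{Wall:1976a}. Concretely, I would first show that
\[
\bar\kappa^s_j=\rho_2\circ\trf\circ\wkappas_j=\trf\circ\rho_2\circ\wkappas_j ,
\]
where the composite $H_j(\pi;\cy 2)\to L^s_j(\Rhat\pi)\to L^s_{j+2}(\Zhat\pi)$ is induced by a map of spectra $B\pi^+\wedge \bbL(\Rhat)\to\bbL(\Zhat\pi)$. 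Because of $2$-adic rigidity, that spectrum-level map should factor through a small number of characteristic classes.

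\emph{Part (1).} For $j=0,1$ I would project to the trivial group for $\bar\kappa^s_0$. For $\bar\kappa^s_1$ I would use the abelianization $\pi\to\pi^{\ab}\to\cy 2$ detecting $H_1(\pi;\cy2)=\pi^{\ab}\otimes\cy 2$, together with the known injectivity for cyclic groups from Theorem A(2) after $2$-adic completion. The cyclic case is where Theorem A's splitting was already detected $2$-adically in \cite{Hambleton:1988}. Naturality then gives a splitting.

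\emph{Parts (2) and (3).} The $2$-adic assembly map is a map of $\bbL(\bZ)$-module spectra into a spectrum whose homotopy is tiny; after localization it is essentially an Eilenberg--MacLane-type target. Such a map corresponds to a cohomology class in $H^*(B\pi;\cy2)$, and naturality in $\pi$ forces it to be a natural (stable or unstable) cohomology operation evaluated on classes pulled back from the universal case. The relevant classes come from the $\Rhat$-lifting; the $\varepsilon$-extension is what kills the degree-two part. I would then argue that the only operations compatible with the module structure and the multiplicativity of the transfer are the iterated squares $x\mapsto x^{2^r}$ applied to a degree-$4$ universal class. Dualizing this gives $\bar\kappa^s_{2^{r+2}}=\bar\kappa^s_4\circ s_r$ and vanishing in degrees not of the form $2^l$. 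Degree $2$ is excluded from the statement. The vanishing on integral homology for $i\ge4$ follows because the relevant class is a Bockstein-type (Sq$^1$-nonzero) operation. A mod~2 class that is the reduction of an integral class pairs trivially with such operations, which I would check via the Pontryagin square / $\bZ/4$ refinement in $L_0(\Zhat)$.

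\emph{Final clause.} For $L'$ in even degrees I would use the arithmetic square (Wall's sequence) $L'_{j+3}(\Zpi\to\Zhat\pi)\to L'_{j+2}(\Zpi)\to L'_{j+2}(\Zhat\pi)$. The relative groups satisfy induction and detection from hyperelementary, and hence for $2$-groups from abelian, subquotients (Lemma \ref{lem:threethree}). So the part of $\operatorname{Im}\kappa'_j$ lying in the image of the relative group is detected on abelian quotients, and this is seen through $\pi\to\pi^{\ab}$.

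\textbf{Main obstacle.} I expect the hardest step to be the operation-classification in (2)--(3): rigorously identifying the $2$-adic assembly with a natural transformation determined by a single degree-$4$ class and its iterated squares. My first attempt would be to compute $\bar\kappa^s$ on elementary abelian groups, where Theorem A applies. I would then use the fact that mod~2 homology of $2$-groups is detected by products and elementary abelian pieces after the Quillen-type reduction permitted by the relative $L$-theory detection.
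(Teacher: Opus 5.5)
Your opening factorization $\bar\kappa^s_j=\trf\circ\rho_2\circ\wkappas_j$ is the right starting point, and your part (1) and final clause are essentially the paper's. Part (1) reduces by naturality to cyclic $2$-groups, where the concrete fact needed is Wall's isomorphism $L'_3(\Zpi)\cong L'_3(\Zhat\pi)$. The final clause is immediate from Lemma~\ref{lem:threethree}, because the image of $\kappa'_j$ is $2$-torsion. Your gloss on that lemma is wrong, though. Every $2$-group is $2$-hyperelementary, so hyperelementary induction gives nothing here. The lemma instead uses Quadratic Generation down to \emph{basic} subquotients, torsion-freeness of $L'_0(\bZ D(2^r))$, and detection of $L'_2(\bZ Q(2^r))$ by the trivial group.

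The genuine gap is in parts (2) and (3), which are the substance of the theorem. Your ``operation classification'' is not an argument. The target $\bbL(\Zhat\pi)$ varies with $\pi$, and its homotopy is not tiny: $L'_*(\Zhat\pi)$ contains $H^1(\Wh'(\Zhat\pi))$, and $L^s_0(\Zhat\pi)$ receives $H^1(SK_1(\Zhat\pi))$. These are exactly where the non-zero values of $\bar\kappa'_4$ and $\bar\kappa^s_2$ live (Theorems E and C). Nothing says that naturality in $\pi$ forces such a map to be a universal cohomology operation. Your claim that only iterated squares of a degree-$4$ class are compatible with the module structure and the transfer is the conclusion restated, not derived. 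Also, the $\varepsilon$-extension does not kill degree two: $j=2$ is excluded precisely because $\bar\kappa^s_2$ can be non-zero through $SK_1$.

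What is missing are the three concrete inputs the paper uses:
\begin{enumerate}
\item the properties of the lifted homomorphisms $\wkappas_j$ established in \cite[Theorem 6.8]{Hambleton:1988};
\item the $2$-adic Detection Theorem \cite[Theorem 3.4]{Hambleton:1988}, which says that $L^{\tilde Y}_*(\Rhat\pi)$ is detected by the \emph{quotient} map $\pi\to\pi^{\ab}$ (indeed $L^{\tilde Y}_i(\Rhat\pi)\cong L^{\tilde Y}_i(\Rhat)\oplus H^0(\pi^{\ab})$);
\item the equality $L^s_*(\Rhat\pi)=L^{\tilde Y}_*(\Rhat\pi)$ for $\pi$ abelian, which holds because $\trf\colon SK_1(\Rhat\pi)\to SK_1(\Zhat\pi)$ is an isomorphism.
\end{enumerate}
With these, (2) and (3) are read off on the abelianization from the listed properties of $\wkappas_*$.

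Your fallback plan runs in the wrong direction and would in fact prove something false. Mod $2$ homology of a $2$-group is not generated by images from elementary abelian subgroups. Theorem A gives $\kappa^s_4=0$ for every elementary abelian group, so any reduction of that kind would force $\bar\kappa^s_4\equiv 0$. That contradicts the group $G(16384)$ of Corollary~\ref{cor:ooze_example}, where $\rho_2\circ\kappa'_4=\bd\circ\lambda_4\neq 0$ and hence $\bar\kappa^s_4\neq 0$.
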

\begin{remark} The homomorphism $\bar\kappa'_2\colon  H_2(\pi;\cy 2) \to L'_{0}(\Zhat\pi)$ is computed in Proposition \ref{prop:kappatwoprime}. In particular, $\kappa'_2(\pi)$ is non-zero if and only if $H^1(\Wh'(\Zhat\pi)) \neq 0$.
In Appendix \ref{sec:appone} we list the $2$-groups of order $\leq 128$ for which $H^1(\Wh'(\Zhat\pi)) \neq 0$  (a necessary condition is that $\pi$ is non-abelian). See Theorem C for more information about $\bar\kappa^s_2$. 
\end{remark}

\subsection{Results for surgery up to  simple homotopy equivalence}
We now settle some open questions from \cite{Hambleton:1988}. Recall that there is a boundary map $\bd\colon H^1(SK_1(\Zhat\pi)) \to L_0^s(\Zhat\pi)$ in the Ranicki-Rothenberg sequence comparing $L^s_*(\Zhat\pi))$ to $L'_*(\Zhat\pi)$ under the change of Whitehead torsion decorations.
\begin{thmc} Let $\pi$ be a finite 2-group. Suppose that the following conditions hold:
\begin{enumerate}
\item the boundary map $\bd\colon H^1(SK_1(\Zhat\pi)) \to L_0^s(\Zhat\pi)$ is non-zero.
\item $H^1(\Wh'(\Zhat\pi)) = 0$.
\end{enumerate}
Then $\kappa^s_2(\pi)$ is non-zero 
 and $\bar\kappa'_2(\pi) = 0$. If $H^1(SK_1(\Zhat\pi)) \to L_0^s(\Zhat\pi)$ is injective, then $\Image\{\bar\kappa^s_2(\pi)\colon H_2(\pi;\cy 2) \to L_0^s(\Zhat\pi)\} \cong H^1(SK_1(\Zhat\pi))$.
\end{thmc}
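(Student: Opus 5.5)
We work entirely at the $2$-adic level, using the factorization $\kappa^s_2 = \trf\circ\wkappas_2$ through $L^s_0(R\pi)$ and its $2$-adic analogue. The key input is the Ranicki--Rothenberg sequence for $\Zhat\pi$:
$$ L^s_1(\Zhat\pi)\to L'_1(\Zhat\pi) \to H^1(SK_1(\Zhat\pi)) \xrightarrow{\ \bd\ } L^s_0(\Zhat\pi) \to L'_0(\Zhat\pi) \to \dots$$
The plan is to show that the image of $\bar\kappa^s_2$ lies in $\Image \bd$ and in fact fills it out. Since $\rho_2\circ\kappa^s_2=\bar\kappa^s_2$, it then follows that $\kappa^s_2(\pi)\neq 0$ whenever $\bd\neq 0$.

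First, we prove $\bar\kappa'_2(\pi)=0$. By Proposition \ref{prop:kappatwoprime} (the computation of $\bar\kappa'_2$ stated in the remark after Theorem B), $\bar\kappa'_2\colon H_2(\pi;\cy 2)\to L'_0(\Zhat\pi)$ factors through a map into $H^1(\Wh'(\Zhat\pi))$, and is non-zero exactly when that group is non-zero. Hypothesis (2) then gives $\bar\kappa'_2=0$. Consequently the composite $H_2(\pi;\cy 2)\to L^s_0(\Zhat\pi)\to L'_0(\Zhat\pi)$ vanishes, so $\Image\bar\kappa^s_2\subseteq \Image\bd$.

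Second, and this is the main step, we show that $\Image\bar\kappa^s_2$ surjects onto $\Image \bd$. Here we would use the explicit description of the lifted classes $\wkappas_2$. A class in $H_2(\pi;\cy 2)$ is represented (after induction from subgroups, by a Dress-induction argument on the $2$-adic $L$-groups) by pairs of commuting elements $g,h$. For such a pair, $\wkappas_2$ is the image of a rank-two form over $R[\langle g,h\rangle]$ whose discriminant is a unit built from $g$, $h$ and $\varepsilon$. Its transfer to $\Zhat\pi$ has trivial weakly simple class. Its Whitehead torsion (its "simple" refinement) is the element of $SK_1(\Zhat\pi)$ given by the commutator-type symbol associated to $(g,h)$.

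To finish this step, we would invoke Oliver's description of $SK_1(\Zhat\pi)$ as a quotient of $H_2(\pi;\bZ)$ (equivalently, of $H_2(\pi;\cy 2)$ after taking Tate cohomology $H^1$). This identifies the torsion map $H_2(\pi;\cy 2)\to H^1(SK_1(\Zhat\pi))$ with a surjection, compatible with $\bd$, so that $\bar\kappa^s_2=\bd\circ\theta$ with $\theta$ onto. I expect this identification to be the main obstacle: one has to match the sign and unit conventions in the lifting through $R$ with Oliver's isomorphism, and to check that the $\varepsilon$-twist does not kill the symbol.

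Granting the second step, both conclusions follow. If $\bd\neq0$, then $\bar\kappa^s_2\neq 0$, and hence $\kappa^s_2(\pi)\neq0$. If $\bd$ is injective, then $\Image\bar\kappa^s_2=\bd(H^1(SK_1(\Zhat\pi)))\cong H^1(SK_1(\Zhat\pi))$.
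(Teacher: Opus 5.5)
Your outline has the right shape: first $\Image\bar\kappa^s_2\subseteq\Image\bd$, then the reverse inclusion. But the second step, which is the entire content of Theorem C, is neither proved nor correctly sketched.

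The paper does not derive this step; it quotes the Milgram--Oliver square (Theorem \ref{thm:kappatwo}), $\rho_2\circ\kappa^s_2=\bd\circ\theta$ on $H_2(\pi;\bZ)$. Here $\theta\colon H_2(\pi;\bZ)\to H^1(SK_1(\Zhat\pi))$ is onto for two reasons: Oliver's isomorphism $H_2(\pi;\bZ)/H_2^{\ab}(\pi)\cong SK_1(\Zhat\pi)$, and the fact that the involution acts by $-1$, so $H^1(SK_1(\Zhat\pi))\cong SK_1(\Zhat\pi)\otimes\cy 2$. Lemma \ref{lem:threetwo} gives $L'_0(\Zhat\pi)\cong H^1(\Wh'(\Zhat\pi))=0$, so $\bd$ is onto $L^s_0(\Zhat\pi)$; this also makes your Step 1 immediate. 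Together these give $\Image\bar\kappa^s_2=\Image\bd=L^s_0(\Zhat\pi)$, and both conclusions follow as in your last paragraph.

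The mechanism you propose for Step 2 would actually fail. A class coming from a commuting pair $g,h$ is induced from the abelian subgroup $\langle g,h\rangle$, so it lies in $H_2^{\ab}(\pi)$. That is exactly the subgroup killed in Oliver's description, so your ``commutator-type symbol'' would be trivial in $SK_1(\Zhat\pi)$. Equivalently, by naturality $\bar\kappa^s_2(\pi)$ vanishes on every class induced from an abelian subgroup $A$, since $\bar\kappa^s_2(A)=0$ (the corollary to Theorem \ref{thm:kappatwo}). A reduction to commuting pairs would therefore prove $\bar\kappa^s_2=0$. The classes that matter are precisely those \emph{not} induced from abelian subgroups.

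Dress induction cannot supply such a reduction either: for a $2$-group every subgroup, including $\pi$ itself, is $2$-hyperelementary, so nothing is reduced. To repair the proof, replace Step 2 by an appeal to Theorem \ref{thm:kappatwo}.

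A minor slip: $\wkappas_2$ takes values in $L^s_2(R\pi)$, not $L^s_0(R\pi)$.
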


\begin{cord} There exist finite $2$-groups $\pi$ such that $\ker \kappa^s_2(\pi) \subsetneq \ker\kappa^h_2(\pi)$.
\end{cord}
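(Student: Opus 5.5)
The plan is to derive Corollary D from Theorem C, combined with the relation between $\kappa'_2$ and $\kappa^h_2$ recorded in the remark after Theorem A. Since $L'_{0}(\Zpi) \to L^h_{0}(\Zpi)$ is injective by Wall's sequence, $\ker\kappa'_2 = \ker\kappa^h_2$. It therefore suffices to find a finite $2$-group $\pi$ and a class $x\in H_2(\pi;\cy 2)$ with $\kappa'_2(\pi)(x)=0$ but $\kappa^s_2(\pi)(x)\neq 0$. The inclusion $\ker\kappa^s_2\subseteq\ker\kappa^h_2$ is automatic, because $\kappa^h_2$ factors through $\kappa^s_2$ via the forgetful map $L^s_0\to L^h_0$.

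The key step is to choose $\pi$ satisfying the hypotheses of Theorem C:
\begin{enumerate}
\item $\bd\colon H^1(SK_1(\Zhat\pi))\to L^s_0(\Zhat\pi)$ is non-zero;
\item $H^1(\Wh'(\Zhat\pi))=0$.
\end{enumerate}
By the remark after Theorem B (Proposition \ref{prop:kappatwoprime}), condition (2) gives $\kappa'_2(\pi)=0$ outright, not just its $2$-adic image. Hence $\kappa^h_2(\pi)=0$ and $\ker\kappa^h_2(\pi)=H_2(\pi;\cy 2)$. Theorem C then gives $\kappa^s_2(\pi)\neq 0$, so $\ker\kappa^s_2(\pi)$ is a proper subgroup, which proves the corollary.

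The main obstacle is exhibiting such a group concretely. Theorem A excludes abelian and basic $2$-groups, so we must look among non-abelian $2$-groups of small order (say $\le 128$) whose $H^1(\Wh'(\Zhat\pi))$ vanishes, i.e.\ groups not in the list of Appendix \ref{sec:appone}. Among these we need $SK_1(\Zhat\pi)\neq 0$ with non-trivial Tate cohomology $H^1$. Oliver's computation $SK_1(\Zhat\pi)\cong H_2(\pi)/H_2^{ab}(\pi)$ makes this a group-homology check, for example via GAP. The subtle part is verifying that $\bd$ is non-zero, equivalently that $H^1(SK_1(\Zhat\pi))$ is not entirely hit from $L^s_1(\Zhat\pi)$-type classes in the Ranicki--Rothenberg sequence
$$L'_1(\Zhat\pi)\to H^1(SK_1(\Zhat\pi))\xrightarrow{\ \bd\ } L^s_0(\Zhat\pi).$$
For this I would use the explicit description of $2$-adic $L$-groups: $L'_1(\Zhat\pi)$ is detected by small computable invariants (discriminant/determinant data) over the residue field $\bF_2\pi$. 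This allows a direct comparison with the map $H^1(SK_1)\to$ (image in $L'_1$ via the dual sequence). Alternatively, one can check that $H^1(SK_1(\Zhat\pi))\to H^1(\Wh(\Zhat\pi))$ is injective, which forces $\bd\neq0$ by exactness.
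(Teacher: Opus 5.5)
\textbf{There is a genuine gap.} Your reduction is fine: $\ker\kappa'_2=\ker\kappa^h_2$ by Wall's injection $L'_0\to L^h_0$, so it suffices to find $x$ with $\kappa^s_2(x)\neq 0$ and $\kappa'_2(x)=0$. The step that fails is your claim that $H^1(\Wh'(\Zhat\pi))=0$ gives $\kappa'_2(\pi)=0$ ``outright''.

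Proposition \ref{prop:kappatwoprime} only computes the $2$-adic image $\bar\kappa'_2=\rho_2\circ\kappa'_2$. The vanishing of $H^1(\Wh'(\Zhat\pi))$ therefore yields $\bar\kappa'_2(\pi)=0$ and nothing more; the remark after Theorem B has to be read with $\bar\kappa'_2$. The Morgan--Pardon group $\pi=\cy 2\times\cy 4$ is a counterexample to your claim. It is abelian, so $L'_0(\Zhat\pi)\cong H^1(\Wh'(\Zhat\pi))=0$, yet $\kappa'_2(\pi)\neq 0$. Torsion classes in $L'_0(\Zpi)$ can be invisible $2$-adically and still be detected on the abelianization. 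So a group satisfying only the hypotheses of Theorem C need not have $\kappa^h_2(\pi)=0$. For the groups in Example \ref{ex:thmEgroups} that are not in Example \ref{ex:corDgroups}, the paper explicitly leaves open whether $\kappa'_2(\pi)\neq 0$.

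The missing ingredient is Lemma \ref{lem:threethree}: the torsion of $L'_0(\Zpi)$ is detected by $L'_0(\Zhat\pi)\oplus L'_0(\bZ\pi^{\ab})$. Once the $2$-adic component vanishes, $\kappa'_2(\pi)$ is detected by its image in $L'_0(\bZ\pi^{\ab})$, which by naturality is $\kappa'_2(\pi^{\ab})\circ p_*$. To kill this you need an extra condition on $\pi$. The paper requires $\pi^{\ab}$ to be elementary abelian, so that $\kappa'_2(\pi^{\ab})=0$ by Theorem A. Your search criterion must include this condition, which is why the paper uses the groups of Example \ref{ex:corDgroups}, such as $SG[128,1377]$.

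Your proposal also never exhibits a group, and existence is the whole content of the corollary. The paper verifies that $\bd\neq 0$ for $SG[128,1377]$ using the central-extension criteria of Theorems \ref{thm:t1} and \ref{thm:t2}, applied to $SG[256,8177]\to SG[128,1377]$. It separately checks $H^1(\Wh'(\Zhat\pi))=0$ and that $\pi^{\ab}$ is elementary abelian.
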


\begin{remark} This answers a question of Kasprowski, Nicholson and Vesel\'a \cite[1.5]{Kasprowski:2024}, 
where it is shown in   \cite[Theorem C]{Kasprowski:2024} that the existence of such examples implies  the existence of closed smooth 4-manifolds which are homotopy equivalent but not simple homotopy equivalent (even up to stabilisations). 
\end{remark}

In Section \ref{sec:three} we give a number of examples of groups $\pi$ for which the conditions in Theorem C are satisfied (see Examples \ref{ex:thmEgroups}). Among these groups, we have examples for Corollary D  in which $\ker \kappa^s_2(\pi) \subsetneq \ker\kappa^h_2(\pi)$, contrasting with the Morgan-Pardon example:  $\pi = \cy 2 \times \cy 4$, where $\kappa^h_2(\pi) \neq 0$. In Section \ref{subsec:k2nonzero}  we give details for the group $\pi = SG[128, 1377]$ in the Small Groups Library, where the notation $SG[i,j]$ means the $j^{th}$ group of order $i$ (see the list in  Examples \ref{ex:corDgroups}).

\subsection{Results on the Oozing Conjecture}
Finally, we reduce the original ``oozing conjecture" for surgery up to homotopy equivalence to an explicit calculation in group homology,  and present a counter-example.

In the following statement, let $s_*\colon H_4(\pi;\cy 2) \to H_2(\pi; \cy 2)$ denote the dual homomorphism to $Sq^2 \colon H^2(\pi;\cy 2) \to H^4(\pi;\cy 2)$, and $\beta\colon H_2(\pi;\cy 2) \to H^0(\pi^{\ab})$ the map induced by the Bockstein. In addition, the short exact sequence
$$0 \to \Wh'(\Zhat\pi) \to \overline{I(\Zhat\pi)} \to \pi^{\ab} \to 0$$
induces a (surjective) coboundary map $\delta\colon H^0(\pi^{\ab}) \to  H^1(\Wh'(\Zhat\pi))$. For the notation see \cite[p.~163]{Oliver:1988a}.

\begin{thme} Let $\pi$ be a finite 2-group, and let 
$$\lambda_4(\pi) := \delta\circ \beta\circ s_* \colon  H_4(\pi;\cy 2) \to H^1(\Wh'(\Zhat\pi)).$$
Then the homomorphism $\kappa'_4(\pi)\neq 0$ if and only if $\lambda_4(\pi) \neq 0$. Moreover, $\kappa'_4(\pi)$ is zero on the image of the integral homology $H_4(\pi; \bZ) \to H_4(\pi; \cy 2)$.
\end{thme}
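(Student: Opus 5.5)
We combine the detection statement of Theorem B with the lifting $\kappa'_4 = \trf\circ\wkappa'_4$ through $L'_4(R\pi)$ from \cite[Theorem 1.16]{Hambleton:1988}. By the last part of Theorem B (with $j=4$), the image of $\kappa'_4$ is detected in $L'_6(\Zhat\pi)\oplus L'_6(\Zpi^{\ab})$. The abelian component vanishes: by naturality, $\kappa'_4(\pi)$ followed by projection equals $\kappa'_4(\pi^{\ab})$ composed with the induced map on homology. Theorem A gives $\kappa^s_4(\pi^{\ab})=0$, and hence $\kappa'_4(\pi^{\ab})=0$. So $\kappa'_4(\pi)\neq 0$ if and only if $\bar\kappa'_4(\pi)\colon H_4(\pi;\cy 2)\to L'_6(\Zhat\pi)$ is nonzero, and the problem becomes purely $2$-adic.

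Next we analyse $L'_6(\Zhat\pi)$. For a $2$-group, $\Zhat\pi$ is local with residue field $\bF_2$, so the projective/free $L$-theory of $\Zhat\pi$ reduces to that of $\bF_2$ plus a contribution from $K_1$. Concretely, the Ranicki--Rothenberg sequence comparing $L'_*(\Zhat\pi)$ with $L^p_*(\Zhat\pi)\cong L_*(\bF_2)$ (or with the $L$-theory decorated by $\pm\pi^{\ab}$) expresses the kernel of $L'_6(\Zhat\pi)\to L_6(\bF_2)$ as a quotient of a Tate group $H^{1}$ of $\Wh'(\Zhat\pi)$. The component in $L_6(\bF_2)=L_2(\bF_2)$, the Arf invariant, is the projection to the trivial group. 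It vanishes on $H_4$, because $\kappa_4(1)=0$ and the image of $\bar\kappa'_4$ is carried by the augmentation ideal. Hence $\bar\kappa'_4$ factors through a homomorphism $H_4(\pi;\cy 2)\to H^1(\Wh'(\Zhat\pi))$. We expect $\bd\colon H^1(\Wh'(\Zhat\pi))\to L'_6(\Zhat\pi)$ to be injective, and would verify this using the even-dimensional analogue of Wall's sequence quoted in the Remark after Theorem A, together with Oliver's computation $\Wh'(\Zhat\pi)\cong \overline{I(\Zhat\pi)}/\pi^{\ab}$.

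The remaining step, identifying this factorization with $\lambda_4=\delta\circ\beta\circ s_*$, is the main obstacle. The plan is to use part (3) of Theorem B type reasoning in reverse. We would compute $\wkappa'_4$ via the product formula: a class in $H_4$ is realised by a singular manifold, and its $\kappa$ is obtained by forming the product with the Kervaire problem. The Arf-type form on the middle dimension then reduces, after Wu's formula, to the evaluation of $Sq^2$ on $H^2$. This gives the factorization through $s_*\colon H_4\to H_2$. We would then use the known description of $\bar\kappa'_2$ from Proposition \ref{prop:kappatwoprime}, restricted to the Bockstein part, namely $\delta\circ\beta$ into $H^1(\Wh'(\Zhat\pi))$, together with a periodicity/transfer comparison $L'_6\leftrightarrow L'_0$ over $\Zhat\pi$ twisted by the involution on $R$. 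I expect checking the compatibility of these boundary maps, including the signs and the $\varepsilon$-twist, to be the delicate part.

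Finally, the integral statement follows from the formula. If $x$ lifts to $H_4(\pi;\bZ)$, then $s_*(x)$ lies in the image of integral homology, since $Sq^2$ is compatible with Bockstein structures via the dual of $Sq^2 Sq^1$ relations. Hence $\beta(s_*x)=0$, since $\beta$ kills the reductions of integral classes. Alternatively, this follows directly from Theorem B(3) together with the detection step above.
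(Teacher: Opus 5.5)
Your reduction steps are sound. Torsion in $L'_2(\Zpi)$ is detected by $\rho_2\oplus p_*$ (Lemma \ref{lem:threethree}, the last part of Theorem B). The $\pi^{\ab}$-component vanishes, and your appeal to Theorem A(iv) for $\pi^{\ab}$ is a legitimate shortcut for the paper's argument there. The map $\bd\colon H^1(\Wh'(\Zhat\pi))\to L'_2(\Zhat\pi)$ is injective by Lemma \ref{lem:threetwo}, since $L^h_{2k+1}(\Zhat\pi)=0$. Your second argument for the integral statement (Theorem B(iii) plus detection) is also correct and does not depend on $\lambda_4$.

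But these steps only give $\rho_2\circ\kappa'_4=\bd\circ\mu$ for \emph{some} homomorphism $\mu\colon H_4(\pi;\cy 2)\to H^1(\Wh'(\Zhat\pi))$. Both directions of the equivalence in Theorem E need $\mu=\delta\circ\beta\circ s_*$. That is the commutative diagram of Proposition \ref{prop:kappafourprime}, and it is exactly the step you leave open. Your sketch for it would not work, for three reasons:
\begin{enumerate}
\item A class in $H_4(\pi;\cy 2)$ carried by an \emph{oriented} singular $4$-manifold is the reduction of an integral class, where $\kappa'_4$ vanishes. So product formulas with oriented manifolds see nothing, while non-orientable representatives do not give oriented problems over $\pi$.
\item The component you must compute is not an Arf invariant; the Arf component is the one that vanishes. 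It is the Whitehead-torsion component $H^1(\Wh'(\Zhat\pi))$ from the Ranicki--Rothenberg sequence, so a Wu-formula computation of an Arf form cannot produce $\beta$ or $\delta$.
\item The ``periodicity comparison $L'_6\leftrightarrow L'_0$'' is never constructed.
\end{enumerate}

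The missing idea is to do the identification in $L^{\tilde Y}_0(\Rhat\pi)$, using the lift $\kappa'_4=\trf\circ\wkappa'_4$ that you mention but never use. There the discriminant $\tau\colon L^{\tilde Y}_0(\Rhat\pi)\to H^0(\pi^{\ab})$ is natural. The $2$-adic Detection Theorem for $L^{\tilde Y}_*(\Rhat\pi)$ then reduces the identity $\tau\circ\rho_2\circ\wkappa'_4=\beta\circ s_*$ to the single case $\pi=C_2$. In that case all groups are small $\cy 2$-vector spaces, one summand comes from the trivial group (where $H_4=0$), and $s_*$ and $\beta$ are isomorphisms, so it can be checked by hand.

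One then passes back to $\Zhat\pi$ through Oliver's sequences $0\to\Wh\to\overline{I}\to\pi^{\ab}\to 0$ for $\Rhat\pi$ and $\Zhat\pi$. Since $\varepsilon+\bar\varepsilon=1$, one has $H^*(\overline{I(\Rhat\pi)})=0$, so the coboundary $\tilde\delta\colon H^0(\pi^{\ab})\to H^1(\Wh^{\tilde Y}(\Rhat\pi))$ is an isomorphism. Naturality under $\trf$ then gives $\delta=\trf\circ\tilde\delta$. This is where $\delta$ enters, and it is what turns your abstract factorization $\mu$ into $\lambda_4$.
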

 An example showing that there exists a finite $2$-group of order $16384$ with $\kappa'_4(\pi) \neq 0$ (and hence $\kappa^s_4 (\pi) \neq 0$) is presented in Section \ref{subsec:fivethree}, Corollary \ref{cor:ooze_example}. Computer investigations suggest that the smallest such examples would have orders $\geq 512$.
 
\begin{corollary}\label{cor:ooze} The oozing conjecture is false for surgery up to simple homotopy equivalence.
There exist non-trivial codimension $k$ Arf invariant problems in codimensions $k = 2^l$, for $l\geq 2$.
\end{corollary}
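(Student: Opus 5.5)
The argument is a reduction to two inputs already available: the example $\kappa^s_4(\pi)\neq 0$ for a group of order $16384$, and the iteration formula $\bar\kappa^s_{2^{r+2}}=\bar\kappa^s_4\circ s_r$ of Theorem B(3).

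For the first sentence, I would invoke the equivalence recalled in the introduction. By the cohomological formula of Wall and Taylor--Williams, the oriented simple surgery obstructions of (twisted) products of the Arf invariant normal map $(f,b)\colon K^{4n+2}\to S^{4n+2}$ with closed manifolds $M^k$, $\pi_1 M=\pi$, are computed by $\kappa^s_j(\pi)$ evaluated on the images of the fundamental classes and their characteristic-class modifications. Vanishing of all codimension $k\geq 4$ Arf invariants is therefore equivalent to $\kappa^s_j=0$ for all $j\geq 4$.

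The group of order $16384$ in Corollary~\ref{cor:ooze_example} has $\lambda_4(\pi)\neq 0$. Theorem E then gives $\kappa'_4(\pi)\neq 0$. Since $L^s\to L'$ carries $\kappa^s_4$ to $\kappa'_4$, we also get $\kappa^s_4(\pi)\neq 0$. To realize this concretely, I would choose a class $x\in H_4(\pi;\cy 2)$ with $\kappa'_4(x)\neq0$. Because $\kappa'_4$ vanishes on integral classes, $x$ must be represented by a non-orientable or $\cy 2$-manifold type cycle. By the standard realization of mod~2 homology classes in $B\pi$ (Thom, in low degree), $x$ is represented by a closed 4-manifold $M\to B\pi$, possibly non-orientable, and the twisted product with $K^{4n+2}$ gives a problem with non-zero obstruction in $L_{6}^s(\Zpi)$ in codimension $4$. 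Here I would rely on the formula of \cite[\S 4]{Hambleton:2012} to identify the obstruction of the twisted product with $\kappa_4(x)$ plus lower terms. The lower terms lie in the image of $\kappa_0,\dots,\kappa_3$ applied to classes arising from characteristic classes. I would arrange for them to vanish, or at least not to cancel the contribution of $x$, by choosing $M$ with trivial relevant Stiefel--Whitney data or by projecting to $L'$ and modding out by the ordinary Arf invariant.

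For codimensions $k=2^l$ with $l\geq 3$, I would use Theorem B(3): $\bar\kappa^s_{2^{r+2}}=\bar\kappa^s_4\circ s_r$. First I need to check that the nontrivial value of $\kappa'_4$ in the example is detected $2$-adically. Theorem B's detection statement for $j$ even gives detection in $L'(\Zhat\pi)\oplus L'(\Zpi^{\ab})$. The abelianization part should vanish for $j=4$, by Theorem A applied to $\pi^{\ab}$ together with naturality, so $\bar\kappa'_4(x)\neq0$. It then suffices to find $y\in H_{2^{r+2}}(\pi;\cy 2)$ with $s_r(y)=x$. Dually, this requires the iterated squaring map $H^4\to H^{2^{r+2}}$ to be nonzero on a cohomology class pairing nontrivially with $x$.

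This surjectivity-type step is the main obstacle. I would first try the elementary fact that squaring $u\mapsto u^{2^r}$ on $H^*(\pi;\cy 2)$ is injective on classes restricting nontrivially to some elementary abelian subgroup, by Quillen detection. If the dual class $u$ to $x$ is not nilpotent, then some $y$ works. If $u$ is nilpotent this could fail. In that case I would instead replace $\pi$ by a product $\pi\times\cy 2$ or $\pi\times\cy 4$, as in the Morgan--Pardon construction, and take a cross product $x\times z$, using the product formula to push the example into every dimension $2^l$.
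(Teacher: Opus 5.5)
Your argument for the first sentence is the one the paper intends. Corollary \ref{cor:ooze_example} and Theorem E give $\kappa'_4(\pi)\neq 0$ for $\pi=G(16384)$, hence $\kappa^s_4(\pi)\neq 0$, and the conjecture for simple homotopy equivalence is equivalent to $\kappa^s_j=0$ for $j\geq 4$. Once that equivalence is quoted, your realization paragraph, with its unspecified handling of the lower terms, is not needed.

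The gap is in the codimension $2^l$, $l\geq 3$, part. You never exhibit $y$ with $\bar\kappa_{2^l}(y)\neq 0$, and both of your strategies provably fail.

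\emph{The non-nilpotent branch is vacuous.} Suppose $\lambda_4(s_r y)\neq 0$. Expand $\beta s_* s_r(y)$ in the basis of an adapted decomposition (Lemma \ref{lem:cyclic_basis}) and argue as in Lemma \ref{lem:fivetwo}. This gives a first-factor projection $p\colon\pi\to C_1$ with $\delta(v_1)\neq 0$ and $0\neq p_*(y)\in H_{2^{r+2}}(C_1;\cy 2)$. So for $u=p^*(w)$, with $w$ the generator of $H^4(C_1;\cy 2)$, you need $u^{2^r}\neq 0$. But $\delta(v_1)\neq 0$ rules out any involution $g\in\pi$ with $p(g)$ non-trivial. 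Such a $g$ would satisfy $\delta[g]=0$ and $[g]\equiv v_1$ modulo $\langle v_2,\dots,v_n\rangle$, contradicting the independence of $\delta(v_1),\dots,\delta(v_k)$. Hence $u$ restricts to zero on every elementary abelian subgroup, so $u$ is always nilpotent by Quillen.

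\emph{The given group fails for $l\geq 3$.} For $G(16384)$, the $\cy 4$-summands of $\pi^{\ab}\cong(\cy 2)^3\oplus(\cy 4)^4$ have $v_i\in 2\pi^{\ab}\subseteq K$. So $C_1\cong\cy 2$ and $u=\phi^4$ with $\phi\in H^1(\pi;\cy 2)$. Modulo the ideal generated by the $d_2$-image, which inflation kills, one has $X_1^2=X_2X_3$, $X_2^2=X_4X_5$, $X_3^2=X_6X_7$, and $X_i^2=0$ for $i\geq 4$. Hence $\phi^8=0$ for every $\phi$. Thus $\lambda_4\circ s_r=0$ for $r\geq 1$. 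Theorem B(iii), Proposition \ref{prop:kappafourprime}, Lemma \ref{lem:threethree} and Theorem A then give $\kappa'_{2^l}(\pi)=0$, hence $\kappa^h_{2^l}(\pi)=0$, for all $l\geq 3$.

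\emph{Products do not rescue this.} On $\pi\times G$ a degree-one class is $\phi_0\otimes 1+1\otimes\psi$, and its $2^l$-th power is $\phi_0^{2^l}\otimes 1+1\otimes\psi^{2^l}$. For $G=\cy 2$ the generator is an involution, so $\psi=0$ by the argument above. For $G=\cy 4$ one has $\psi^2=0$, and the new $\cy 4$-summand again has its $v$ in $K$, so $C_1\cong\cy 2$ still. Hence $\bar\kappa'_{2^l}(\pi\times G)=0$ for $l\geq 3$, and your cross product $x\times z$ gives nothing at the level of $\kappa'$, which is where your argument operates.

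\emph{What is missing.} You need a genuinely new family of groups in which the necessarily nilpotent class $u$ survives to the $2^l$-th power. One possible source is deeper iterations of the pattern $x_i^2=[x_j,x_k]$ that defines $G(16384)$. You would also need a degree-$2^l$ analogue of Lemma \ref{lem:fivefive} that controls the higher differentials, plus the check $\delta(v_1)\neq 0$. The paper works out only the codimension-$4$ case explicitly.

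If instead the second sentence only asks for (twisted) products $K^{4n+2}\times M^k$ whose obstruction is non-trivial modulo the ordinary Arf invariant, there is a much easier route. Cross the codimension-$4$ example with $(\mathbb{CP}^2)^m$, which acts as the periodicity isomorphism on $L$-groups. In that case the obstruction is still carried by $\kappa_4$ rather than $\kappa_{2^l}$, whereas the restriction $k=2^l$ points to $\kappa_{2^l}$.
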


\begin{remark} In Section \ref{sec:five}, we derive the  explicit formula in Theorem E via
the commutative diagram (see Proposition \ref{prop:kappafourprime}):
$$\xymatrix@C+7pt{
H_4(\pi;\cy 2) \ar[r]^(0.4){\lambda_4(\pi)} \ar[d]_{\ \kappa'_4(\pi)\ \ } & H^1(\Wh'(\Zhat\pi))\ar[d]^{\bd}\\
L'_2(\Zpi) \ar[r]^{\rho_2} & L'_2(\Zhat\pi)
}$$
in which the boundary map $\bd$ is injective. This reduces the original oozing conjecture that $\kappa^h_j = 0$, for $j \geq 4$, to an explicit group homology calculation, namely computing the map $\lambda_4(\pi)$.
Recall that there is an injection $L'_2(\Zpi)  \to L^h_2(\Zpi) $ and hence $\kappa^h_4 \neq 0$ if and only if $\kappa'_4\neq 0$, so it is enough to determine $\kappa'_4(\pi)$.
\end{remark}

\setcounter{tocdepth}{1}
 \tableofcontents
 \begin{acknowledgement}  We would like to thank Larry Taylor for recent helpful conversations, and for all our joint work over the years with our friend and collaborator, the late Bruce Williams\footnote[2]{August 17, 1945 - January 11, 2018}. We also thank David Green and Simon King for useful information about  the cohomology of finite $2$-groups, and Bob Oliver for help with $SK_1(\Zhat\pi)$.
\end{acknowledgement}
%\setcounter{tocdepth}{1}
% \tableofcontents
\section{Abelian or basic $2$-groups}\label{sec:one}
 We begin by summarizing some results about the $K$-theory of group rings (a convenient reference and guide to the literature is \cite{Oliver:1988a}).
 Recall that there is a short exact sequence 
$$0 \to SK_1(\Zpi) \to \Wh(\Zpi) \to \Wh'(\Zpi) \to 0$$
where $SK_1(\Zpi) = \ker\{K_1(\Zpi) \to K_1(\bbQ\pi)\}$, and 
$$\Wh(\Zpi) = K_1(\Zpi)/SK_1(\Zpi) \oplus \{\pm 1\} \oplus \pi^{\ab}.$$
 The quotient $\Wh'(\Zpi)$ is finitely generated and torsion free, of $\bZ$-rank equal to the difference between the number of irreducible real and rational representations of $\pi$.
In addition, we have a short exact sequence
$$0 \to Cl_1(\Zpi) \to SK_1(\Zpi) \to SK_1(\Zhat\pi) \to 0$$
where $SK_1(\Zhat\pi) = \ker\{K_1(\Zhat\pi) \to K_1(\hat\bbQ_2\pi)\}$.

\smallskip
The Tate cohomology of these various $K$-groups appear in Rothenberg sequences relating the variant forms of $L$-theory, namely $L^s$, $L'$ and $L^h$. 
Here are some of the facts we need (for $\pi$ of $2$-power order):
\begin{itemize}
\item \cite[14.2; 14.4 ]{Oliver:1988a} $SK_1(\Zpi) = 0$ if $\pi$ is basic or elementary abelian.
\item  \cite[8.6]{Oliver:1988a} There is a natural isomorphism $\Theta_\pi\colon SK_1(\Zhat\pi) \cong H_2(\pi; \bZ)/H_2^{\ab}(\pi)$. The involution $g \mapsto g^{-1}$ induces multiplication by $-1$ on $SK_1(\Zhat\pi)$. If $\pi$ is abelian, $SK_1(\Zpi) = Cl_1(\Zpi)$.
\item \cite[Prop.~13]{Oliver:1980a} $H^1(\Wh'(\Zhat\pi)) = 0$ if $\pi$ is basic or abelian.
\item \cite[8.7]{Oliver:1988a} The transfer $\trf\colon SK_1(\Rhat\pi) \to SK_1(\Zhat\pi)$ is an isomorphism.
\end{itemize}

\begin{proof}[The proof of Theorem A] Since $SK_1(\Zpi) = 0$ for any basic or abelian $2$-group, we have $L^s_*(\Zpi) = L'_*(\Zpi)$ and $\kappa^s_*(\pi) = \kappa'_*(\pi)$.
Then part (i) follows immediately from  \cite[Theorem 2.1]{Hambleton:1988}: this is essentially the argument in \cite[Theorems 1.2 \& 4.1]{Taylor:1980}.

 Part (ii) follows from naturality and \cite[Theorem 0.3(ii)]{Hambleton:1988}, and the vanishing results of
 parts (iv) and (v) follow from \cite[Theorem 6.8]{Hambleton:1988} by applying the $L$-theory transfer. For $\pi$ quaternion, the Cappell-Shaneson example shows that $\kappa^s_3(\pi)$ is injective. 
 
Next we consider  part (iii). From the calculations of Wall \cite[\S 5.2]{Wall:1976a}, $L_0'(\Zpi)$ is torsion free if $\pi$ is a basic $2$-group, since $H^1(\Wh'(\Zhat\pi)) = 0$. This gives  $\kappa^s_2(\pi)  = \kappa'_2(\pi) = 0$.

It remains to consider $\kappa^s_2(\pi)=\kappa'_2(\pi) $ for  $\pi$ abelian. 
We can
write $\pi = C_1 \times C_2 \times \dots \times C_r$, where $C_i \cong \cy{2^{k_i}}$. Then there is a surjection
$\bigoplus H_1(C_i; \cy 2) \to H_1(\pi; \cy 2)$, and 
\eqncount
\begin{equation}\label{eq:homtwo} H_2(\pi;\cy 2) \cong \bigoplus_{i = 1}^r H_2(C_i; \cy 2) \oplus \bigoplus_{i <j} H_1(C_i; \cy 2) \otimes H_1(C_j; \cy 2).
\end{equation}
In general, if $\pi$ is an abelian $2$-group of rank $r$ with $s$ summands of order two, then
\eqncount
\begin{equation}\label{eq:lgroup}
L'_0(\Zpi) = \Sigma \oplus \left (2^r -r -1 - {s\choose 2}\right )\cdot \cy 2
\end{equation}
by \cite[Theorem 3.3.2]{Wall:1976a}. In particular, $L'_0(\Zpi)$ is torsion free for $\pi $ cyclic or $\pi = C_2 \times C_2$. 

Suppose first that $\pi$ is elementary abelian of rank $\geq 2$, and let $\gamma  = C_2 \times C_2$ be a subgroup of $\pi$, with $i_*\colon \gamma  \to \pi$ the inclusion map. Then $$\kappa'_2(i_*(x)) = i_*(\kappa'_2(x)) \in \Image\{L'_0(\bZ\gamma ) \xrightarrow{\ i_*\ } L'_0(\Zpi)\}.$$
 But $\kappa'_2(\gamma ) = 0$ since $L'_0(\bZ\gamma )$ is torsion free, hence $\kappa'_2(i_*(x)) = 0$. Since every class in $H_2(\pi;\cy 2)$ is a sum of classes induced from subgroups of rank $\leq 2$, and $\kappa_2^s=0$ for cyclic groups, it follows that $\kappa^s_2(\pi) =0$ for elementary abelian $2$-groups.

For $\pi$ any abelian group, we can again restrict to elements $x \in H_2(\pi;\cy 2)$ with $\kappa_2'(x) = 0$ which are a sum of classes induced from rank 2 subgroups. If $\pi$ has rank $r$ and $s$ summands of order two, we can write $\pi = \pi_1 \times \pi_2$, where 
$\pi_1 = C_ \times \dots \times C_s$ is elementary abelian, and $\pi_2 = C'_1 \times \dots C'_{r-s}$ consists of the factors with order $\geq 4$.
We denote by $\{\alpha_i\}$ the generators of $H_1(C_i;\cy 2)$, and by $\{\beta_j\}$ those of  $H_1(C'_j;\cy 2)$.
By formula  \eqref{eq:homtwo} we may assume  that 
$$x = \sum a_{ij} (\alpha_i \otimes \beta_j) + \sum b_{kl} (\beta_k \otimes \beta_l) \in H_2(\pi; \cy 2)$$
modulo the image of $H_2(E;\cy 2)$.
Note that these classes are the reductions of integral homology classes in $H_2(\pi;\bZ)$.
For each subgroup $\gamma = C_i \times C'_j$ or $\gamma = C'_k \times C'_l$,  we have
$H_2(\gamma ;\bZ) = \cy 2$, and the non-triviality of the Morgan-Pardon example in $L_0'(\cy 2 \times \cy 4)\cong \cy 2$ shows by projection that $\kappa_2'(\gamma )$ is an isomorphism for all such groups.

  Since there exist independent projections $\{p_\gamma \colon \pi \to \cy 2 \times\cy{4}\}$,  such that $p_\gamma(\gamma') = 1$ for $\gamma' \neq \gamma$, it follows by naturality that $x \equiv 0 \pmod{H_2(E;\cy 2)}$ and exactness is proved.

\medskip
To establish the last part of Theorem A, consider the diagram
$$\xymatrix{H_2(E;Z/2)\ar[d]_{i_*}\ar[r]^{\kappa_2^s}&L^s_0(ZE)\ar[r]^{\cong}\ar[d]&L'_0(ZE)\ar[d]^{i_*}\cr
H_2(\pi;Z/2)\ar@/_2pc/[rr]^{\kappa_2'}\ar[r]^{\kappa_2^s}&L^s_0(\Zpi)\ar[r]&L'_0(\Zpi) }$$
 If $\kappa_2^s(x) \neq 0$ for some $x \in H_2(\pi;\cy 2)$,  but $\kappa_2'(x) = 0$, then there exists $y \in H_2(E;\cy 2)$ such that $i_*(y) = x \in H_2(\pi;\cy 2)$. But then $\kappa_2^s(x) = \kappa^s_2(i_*(y)) = i_*(\kappa^s_2(y)) = 0$, since $\kappa^s_2 = 0$ for elementary abelian $2$-groups. If $\kappa_2'(x) \neq 0$, then  the exact sequence
$$ \dots \to H^1(SK_1(\Zpi)) \to L^s_0(\Zpi)\to L'_0(\Zpi)\to \dots$$
shows that the image of $\kappa^s_2(\pi)$ has zero intersection with the image of  $ H^1(SK_1(\Zpi))$. This completes the proof. 
\end{proof}

\section{The 2-adic universal homomorphisms}\label{sec:two}
Information about the 2-adic $\kappa$-homomorphisms is provided by \cite[\S 7]{Hambleton:1988} and Milgram-Oliver \cite{Milgram:1990}. We will recall these results below.

\subsection{The $2$-adic $\kappa_2^s$ homomorphisms}

\begin{theorem}[Milgram-Oliver \cite{Milgram:1990}]\label{thm:kappatwo}
Let $\pi$ be a finite $2$-group.
There is a commutative diagram:
$$\xymatrix{ H_2(\pi; \bZ) \ar[d]_{\kappa^s_2}\ar[r]^(0.4){\theta}]& H^1(SK_1(\Zhat\pi))\ar[d]^{\bd}\\
L_0^s(\Zpi)\ar[r]^{\rho_2} & L_0^s(\Zhat\pi)
}$$
where $\rho_2$ is the natural map to the 2-adic $L$-group.
\end{theorem}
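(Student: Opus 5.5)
The plan is to realize both routes around the square by one explicit construction, a surgery problem or form, and compare the results in $L^s_0(\Zhat\pi)$. First, the left vertical arrow has to be made sense of on integral classes: $\kappa^s_2$ is defined on $H_2(\pi;\cy 2)$, so on $H_2(\pi;\bZ)$ it means the composite with reduction mod 2. Next, identify $\theta$. By Oliver's isomorphism $\Theta_\pi\colon SK_1(\Zhat\pi)\cong H_2(\pi;\bZ)/H_2^{\ab}(\pi)$, and because the involution acts by $-1$ on $SK_1(\Zhat\pi)$, every element is a cycle in $H^1(SK_1(\Zhat\pi))$, which equals $SK_1(\Zhat\pi)\otimes\cy 2$ modulo the image of $1+\ast$. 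So $\theta$ should be
$$H_2(\pi;\bZ)\to H_2(\pi;\bZ)/H_2^{\ab}(\pi)\xrightarrow{\Theta_\pi^{-1}} SK_1(\Zhat\pi)\to H^1(SK_1(\Zhat\pi)).$$

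Second, reduce to a universal case using naturality. Any class in $H_2(\pi;\bZ)$ is the image of the fundamental class of a map from a product of two circles or a surface, via Hopf's formula. By Oliver's description, the elements of $SK_1(\Zhat\pi)$ are generated by commutator-type symbols $\{g,h\}$ for commuting pairs modulo $H_2^{\ab}$. I would therefore try to check commutativity on classes pushed forward from subgroups, or from central extensions $\cy{2^k}\to\tilde\pi\to\pi$. Both $\kappa^s_2$ and $\theta$ are natural for inclusions, and $\rho_2$ and $\bd$ are natural as well.

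Third, and this is the main step, compute $\rho_2\kappa^s_2(x)$ via the lifting through $R=\bZ[\varepsilon]$ from the remark in the introduction: $\kappa^s_2=\trf\circ\wkappas_2$. Over $\Rhat$ the form representing $\wkappas_2(x)$, for $x$ the image of a torus class $T^2\to B\pi$, should be a hyperbolic-type form whose Whitehead torsion relative to the standard basis is the symbol determined by $x$. In $L^s$ this element is therefore $\bd$ applied to the class of that torsion in $H^1(SK_1(\Rhat\pi))$. Transferring with $\trf\colon SK_1(\Rhat\pi)\cong SK_1(\Zhat\pi)$ (Oliver 8.7), which is compatible with the Rothenberg boundary, gives $\bd\theta(x)$. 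The hard part is identifying the torsion of the $2$-adic form with $\Theta_\pi^{-1}$ of the class, and getting the normalization of $\Theta_\pi$ right.

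To do this I would follow Milgram–Oliver. The key observation is that $\rho_2\kappa^s_2(x)$ maps to zero in $L'_0(\Zhat\pi)$ for integral classes: for $\pi$ abelian this follows from Theorem A's detection argument, and in general from the vanishing on integral homology recorded in Theorem B. So $\rho_2\kappa^s_2(x)$ lies in the image of $\bd$, and it remains to identify its preimage. That identification can be checked on the universal examples $\cy{2^k}\times\cy{2^l}$ and on Oliver's generators, where both sides are computed explicitly.
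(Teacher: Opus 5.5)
The paper gives no proof of this statement; it quotes it from Milgram--Oliver. Your proposal has a genuine gap: it never carries out the identification of $\rho_2\kappa^s_2(x)$ with $\bd\theta(x)$ on any class where that identification says something.

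\textbf{The test cases are vacuous.} You only check torus classes of commuting pairs and the groups $\cy{2^k}\times\cy{2^l}$. These are exactly the classes on which both routes vanish for trivial reasons.
\begin{itemize}
\item The classes $g\wedge h$ with $gh=hg$ generate $H_2^{\ab}(\pi)$, which by Oliver's theorem is precisely the kernel of $\theta$. So your statement that $SK_1(\Zhat\pi)$ is generated by commuting-pair symbols modulo $H_2^{\ab}(\pi)$ is backwards: those symbols are what is divided out.
\item For $A$ abelian, $SK_1(\Zhat A)=0$ and $L^s_0(\Zhat A)\cong L'_0(\Zhat A)\cong H^1(\Wh'(\Zhat A))=0$ (Lemma~\ref{lem:threetwo} together with Oliver's vanishing result). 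Hence $\rho_2\kappa^s_2$ is zero on everything induced from abelian subgroups.
\end{itemize}
Naturality cannot rescue the reduction. For the groups of order $64$ with $SK_1(\Zhat\pi)\cong\cy 2$ (Remark~\ref{rem:fourtwelve}), every proper subgroup $H$ has $SK_1(\Zhat H)=0$. So no class with $\theta(x)\neq 0$ is induced from a proper subgroup, and pushing forward along central extensions $\wpi\to\pi$ only moves the problem to a larger group.

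The relevant classes are represented by surfaces of genus $\geq 2$. In the Hopf description $H_2(\pi;\bZ)=(R\cap[F,F])/[F,R]$, they are relations $\prod_i[\hat g_i,\hat h_i]$ that are not products of commuting-pair commutators. What is missing is a computation, for such a class, of the torsion in $SK_1(\Zhat\pi)$ (or in $SK_1(\Rhat\pi)$ before transfer) of the based $2$-adic form representing $\rho_2\kappa^s_2(x)$. That torsion must then be identified with $\Theta_\pi^{-1}[x]$ via Oliver's explicit description of $SK_1(\Zhat\pi)$. This is the actual content of the Milgram--Oliver theorem, and your third step only names it as ``the hard part''.

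\textbf{The ``key observation'' is true but misattributed, and it does not finish the argument.} Theorem B excludes $j=2$, and its vanishing on integral classes is stated only for $i\geq 4$. The correct source is Proposition~\ref{prop:kappatwoprime}: the Bockstein $\beta$ kills reductions of integral classes. For abelian $\pi$ you also cannot quote the corollary following the theorem, since that corollary is deduced from the theorem itself.

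More importantly, this observation only places $\rho_2\kappa^s_2(x)$ in $\Image\bd$. The map $\bd\colon H^1(SK_1(\Zhat\pi))\to L^s_0(\Zhat\pi)$ need not be injective: its kernel is the image of $L'_1(\Zhat\pi)$ (compare Theorem~\ref{thm:t2} and the injectivity hypothesis in Theorem C). So ``identifying its preimage'' is not a well-posed finite check. Asserting that $\rho_2\kappa^s_2(x)=\bd\theta(x)$ is the whole theorem, not a remaining detail.
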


\begin{corollary} If $\pi$ is abelian, then $\bar\kappa_2^s(\pi) \colon H_2(\pi;\cy 2) \to L_0^s(\Zhat\pi)$ is zero.\end{corollary}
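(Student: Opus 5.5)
We need to show that $\bar\kappa_2^s(\pi)=\rho_2\circ\kappa^s_2(\pi)$ vanishes on all of $H_2(\pi;\cy 2)$ when $\pi$ is abelian. The plan is to split $H_2(\pi;\cy 2)$ into the reduction of integral classes and a complementary part coming from the elementary abelian subgroup $E$. We then treat each part separately: Theorem \ref{thm:kappatwo} handles the integral classes, and naturality from $E$ handles the rest.

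\emph{Step 1: integral classes.} By Theorem \ref{thm:kappatwo}, on classes reduced from $H_2(\pi;\bZ)$ we have $\rho_2\circ\kappa^s_2 = \bd\circ\theta$, where $\theta$ lands in $H^1(SK_1(\Zhat\pi))$. For $\pi$ abelian, Oliver's isomorphism $\Theta_\pi\colon SK_1(\Zhat\pi)\cong H_2(\pi;\bZ)/H_2^{\ab}(\pi)$ applies. For abelian groups $H_2^{\ab}(\pi)=H_2(\pi;\bZ)$, since every class in $H_2$ of an abelian group is generated by commuting pairs, i.e.\ comes from $\bZ^2$ or from abelian subgroups. Hence $SK_1(\Zhat\pi)=0$, so $H^1(SK_1(\Zhat\pi))=0$. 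Therefore $\bar\kappa^s_2$ vanishes on the image of $H_2(\pi;\bZ)\to H_2(\pi;\cy 2)$.

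\emph{Step 2: the remaining classes.} Using the decomposition \eqref{eq:homtwo}, the cross terms $H_1(C_i;\cy2)\otimes H_1(C_j;\cy2)$ are reductions of integral classes, as noted in the proof of Theorem A. So are the summands $H_2(C_i;\cy 2)$ when $|C_i|\ge 4$: the Bockstein $H_2(C_i;\cy2)\to H_1(C_i;\bZ)$ has image $2^{k_i-1}\cy{2^{k_i}}$, which reduction to $\cy 2$ detects. The same argument handles the summands with $|C_i|=2$ as well, since there $H_2(C_i;\bZ)=0$ but the class still arises from $C_i$ itself.

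Rather than track these cases, it is cleaner to use the exact sequence of Theorem A. That sequence shows $H_2(\pi;\cy 2)$ is spanned by integral reductions together with the image of $H_2(E;\cy 2)$. For the latter classes, naturality gives $\kappa^s_2(i_*y)=i_*\kappa^s_2(y)=0$, because $\kappa^s_2(E)=0$ for elementary abelian $E$ (proof of Theorem A, using $SK_1(\bZ E)=0$). Combined with Step 1, $\bar\kappa^s_2(\pi)=0$.

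The main obstacle is the claim that $H_2(\pi;\cy2)$ is generated by integral reductions plus $i_*H_2(E;\cy2)$. This follows from \eqref{eq:homtwo} together with the fact that $H_2(C_i;\cy 2)$ for $|C_i|=2$ lies in $i_*H_2(E;\cy2)$.
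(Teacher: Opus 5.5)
Your overall strategy works, but the justification of the spanning claim, which you yourself flag as the main obstacle, contains a false step. In Step 2 you assert that the summands $H_2(C_i;\cy 2)$ with $|C_i|\ge 4$ are reductions of integral classes. They are not. The image of $H_2(\pi;\bZ)\to H_2(\pi;\cy 2)$ is exactly the kernel of the Bockstein $\beta\colon H_2(\pi;\cy 2)\to H_1(\pi;\bZ)=\pi$. By naturality, $\beta$ sends the class coming from $H_2(C_i;\cy 2)$ to the element of order two in $C_i$, which is non-zero (consistently, $H_2(C_i;\bZ)=0$).

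Citing the exact sequence of Theorem A does not help either. Exactness only identifies $\ker\kappa^s_2(\pi)$ with $i_*H_2(E;\cy 2)$; it says nothing about spanning.

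The spanning claim is nevertheless true, for a different reason. Every summand $H_2(C_i;\cy 2)$, whatever the order of $C_i$, lies in $i_*H_2(E;\cy 2)$. This is because the inclusion of the order-two subgroup into $C_i$ induces an isomorphism on $H_2(-;\cy 2)$: both Bocksteins are injective onto the order-two subgroup of $H_1(-;\bZ)$. More cleanly, $\beta$ has image $E$, and $\beta\circ i_*=i_*\circ\beta_E$ maps $H_2(E;\cy 2)$ onto $E$. Hence $H_2(\pi;\cy 2)=\ker\beta+i_*H_2(E;\cy 2)$.

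With this repair your proof is complete. Theorem \ref{thm:kappatwo} together with $SK_1(\Zhat\pi)=0$ kills $\ker\beta$; note that $H_2^{\ab}(\pi)=H_2(\pi;\bZ)$ simply because $\pi$ is itself abelian. Naturality and $\kappa^s_2(E)=0$ then kill $i_*H_2(E;\cy 2)$ even before applying $\rho_2$.

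Your route also differs from the one implicit in the paper. The paper presents the corollary as immediate from Theorem \ref{thm:kappatwo} and Oliver's results, and its framework handles the non-integral classes $2$-adically (compare the opening of Section \ref{sec:three}). For abelian $\pi$ one also has $H^1(\Wh'(\Zhat\pi))=0$. So $\bar\kappa'_2(\pi)=0$ by Proposition \ref{prop:kappatwoprime}, and the image of $\bar\kappa^s_2(\pi)$ lies in $\bd(H^1(SK_1(\Zhat\pi)))=0$. In fact the target itself vanishes: by Lemma \ref{lem:threetwo}, $L^s_0(\Zhat\pi)=L'_0(\Zhat\pi)\cong H^1(\Wh'(\Zhat\pi))=0$.

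That route is shorter, purely $2$-adic, and needs no decomposition of $H_2(\pi;\cy 2)$. Yours avoids Oliver's vanishing of $H^1(\Wh'(\Zhat\pi))$ for abelian groups. In exchange it depends on the integral input behind $\kappa^s_2(E)=0$ from the proof of Theorem A, namely $SK_1(\bZ E)=0$ and Wall's torsion-freeness of $L'_0(\bZ[\cy 2\times\cy 2])$.
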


Oliver showed in \cite[Theorem 3]{Oliver:1980a} that there is a natural isomorphism
$$\theta\colon H_2(\pi;\bZ)/H_2^{\ab}(\pi) \xrightarrow{\cong} SK_1(\Zhat\pi)$$
where $H_2^{\ab}(\pi) \subset H_2(\pi; \bZ)$ denotes the image under induction from abelian subgroups of $\pi$. Furthermore, Oliver proved that the standard involution  induced by $g\mapsto g^{-1}$ is multiplication by $-1$ on $SK_1(\Zhat\pi)$ (see \cite[Theorem 8.6]{Oliver:1988a}), hence 
$H^1(SK_1(\Zhat\pi)) \cong SK_1(\Zhat\pi)\otimes \cy 2$.
\begin{lemma}\label{lem:threetwo} $L'_0(\Zhat\pi) \cong H^1(\Wh'(\Zhat\pi))$ and $L'_2(\Zhat\pi)\cong \cy 2 \oplus H^1(\Wh'(\Zhat\pi))$. 
\end{lemma}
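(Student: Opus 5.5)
We would compute $L'_*(\Zhat\pi)$ from the Ranicki--Rothenberg sequence comparing $L'$ with $L^h$, together with Wall's calculation of $L^h_*(\Zhat\pi)$ for $\pi$ a finite $2$-group. Since $\pi$ is a $2$-group, $\Zhat\pi$ is a local ring whose reduction modulo its radical is $\bF_2$. Reduction therefore induces isomorphisms on $L$-theory, with the appropriate decoration. This is Wall's result \cite{Wall:1976a} that $L^h_n(\Zhat\pi)\cong L_n(\bF_2)$ for the $K_0$-decoration and the standard oriented involution. It gives $L^h_0(\Zhat\pi)\cong \cy 2$ (rank mod $2$, since the Arf invariant is killed), $L^h_1=L^h_3=0$, and $L^h_2(\Zhat\pi)\cong\cy 2$ (the Arf invariant). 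More precisely, the surviving invariants are detected by the augmentation $\Zhat\pi\to\Zhat$. The first step is to record these values, and in the $h$-decoration I would pin down which version of Wall's theorem applies.

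Next we write the Rothenberg sequence for the pair $\{\pm\pi^{\ab}\}\subset\{\text{all of } K_1\}$ modulo $SK_1$, namely
$$\dots\to L'_n(\Zhat\pi)\to L^h_n(\Zhat\pi)\to H^n(\Wh'(\Zhat\pi))\to L'_{n-1}(\Zhat\pi)\to L^h_{n-1}(\Zhat\pi)\to\dots$$
As in the integral case quoted in the remark after Theorem A (Wall \cite[p.~78]{Wall:1976a}), we use that $\Wh'(\Zhat\pi)$ is torsion-free with involution acting so that $H^{0}(\Wh'(\Zhat\pi))=0$. Concretely, the standard involution acts on $\Wh'$ through the log map by the "transpose", and Oliver's computation shows the relevant Tate group vanishes in even degree. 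So only $H^1$ appears, and it appears between $L'_{2k}$ and $L^h_{2k}$ shifted appropriately.

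With this in hand, we get exact sequences
$$0\to H^{1}(\Wh'(\Zhat\pi))\to L'_0(\Zhat\pi)\to L^h_0(\Zhat\pi)\to H^0(\Wh')=0$$
and
$$0\to H^1(\Wh'(\Zhat\pi))\to L'_2(\Zhat\pi)\to L^h_2(\Zhat\pi)\to 0,$$
where the boundary maps come from the odd terms $L^h_{1}=L^h_3=0$. Two further points need checking.

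First, for $n=0$, the map $L'_0\to L^h_0\cong\cy 2$ must be shown to be zero modulo the stated answer. In fact the rank mod $2$ invariant of $L^h_0$ lifts, so the correct bookkeeping must give $L'_0(\Zhat\pi)\cong H^1(\Wh')$. I would reconcile this by using the correct decoration convention: $L'$ includes $\pm\pi^{\ab}$ and the discriminant in $\Zhat^\times/\pm$, so that the $\cy 2$ of $L^h_0$ is absorbed, that is, it maps onto $H^0$ of the units. The main obstacle is precisely this careful matching of decorations and Tate degrees.

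Second, for $n=2$, the extension splits because the Arf invariant is detected by augmentation to $L_2(\Zhat)\cong\cy 2$, which is natural and split by inclusion of the trivial group. This yields $L'_2(\Zhat\pi)\cong\cy 2\oplus H^1(\Wh'(\Zhat\pi))$.
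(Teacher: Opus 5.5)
There is a genuine gap, and it sits at the step that carries the content of the lemma. Your framework matches the paper's: the Rothenberg sequence comparing $L'$ with $L^h$, together with $L^h_*(\Zhat\pi)\cong \cy 2,0,\cy 2,0$. But the step you rely on, $H^0(\Wh'(\Zhat\pi))=0$, is false. As the decoration is used here, $\Wh'(\Zhat\pi)$ contains $\Zhat^\times/\{\pm 1\}\cong 1+4\Zhat$ as an involution-invariant direct summand, split by the augmentation, and the involution acts trivially on it. So $H^0(\Wh'(\Zhat\pi))$ contains a copy of $\cy 2$ generated by the unit $5$. Indeed the paper's computation gives $0\to \cy 2\to H^0(\Wh'(\Zhat\pi))\to L'_3(\Zhat\pi)\to 0$ and $H^0(\Wh'(\Zhat\pi))\cong L'_1(\Zhat\pi)$.

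Worse, if $H^0$ did vanish, your own sequence would force $L'_0\to L^h_0\cong\cy 2$ to be onto. Then $L'_0(\Zhat\pi)$ would be an extension of $\cy 2$ by $H^1(\Wh'(\Zhat\pi))$, contradicting the statement. This is the inconsistency your ``First'' paragraph runs into, and the proposed ``reconciliation'' does not resolve it; it simultaneously asserts that the $L^h_0$ invariant lifts and that it is absorbed. The invariant is also misidentified. Nonsingular quadratic forms over $\Zhat\pi$ have even rank, since the reduction to $\bF_2$ has alternating bilinear form, so ``rank mod $2$'' detects nothing. In fact the generator of $L^h_0$ does \emph{not} lift to $L'_0$.

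What is missing is the identification of $L^h_0(\Zhat\pi)\cong\cy 2$ via the discriminant, which is exactly the Rothenberg map $L^h_0(\Zhat\pi)\to H^0(\Wh'(\Zhat\pi))$. By naturality for $\Zhat\to\Zhat\pi\to\Zhat$, it suffices to evaluate this map on the generator over $\Zhat$. That is the quadratic form with symmetrization $\left(\begin{smallmatrix}2&1\\1&2\end{smallmatrix}\right)$, whose reduction is $x^2+xy+y^2$ with Arf invariant $1$. Its determinant $3$ is not $\equiv\pm 1 \pmod 8$, so it is non-trivial in $H^0(\Zhat^\times/\{\pm1\})$. Hence the map is injective, $L'_0\to L^h_0$ is zero, and with $L^h_1=0$ you get $L'_0(\Zhat\pi)\cong H^1(\Wh'(\Zhat\pi))$.

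For $n=2$, surjectivity of $L'_2\to L^h_2$ comes not from vanishing of $H^0$. It comes from the fact that the Arf generator is defined over $\Zhat$ on a symplectic basis (a skew form has square discriminant), so $L^h_2\to H^0(\Wh'(\Zhat\pi))$ is zero. Then $L^h_3=0$ and your splitting via $L'_2(\Zhat)\cong\cy 2$ and the augmentation complete the argument, as in the paper.
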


\begin{proof}
We have the Ranicki-Rothenberg sequence
$$ \dots \to  L^h_{n+1} (\Zhat\pi) \to H^n(\Wh'(\Zhat\pi)) \to L'_n(\Zhat\pi) \to L^h_n(\Zhat\pi)\to \dots$$
and the calculations $L^h_n(\Zhat\pi) = \cy 2$, for $n$ even, detected by the discriminant or the Arf invariant, and $L^h_n(\Zhat\pi) =0$ for $n$ odd  (see 
\cite[\S 1.2]{Wall:1976a} and \cite[\S 2, Theorem 3.1]{Hambleton:2000} for details, background on the ``round" $L$-groups, and their relation to the surgery obstruction groups). Hence 
we obtain an isomorphism $H^1(\Wh'(\Zhat\pi)) \to L'_0(\Zhat\pi)$ and a split injection $H^1(\Wh'(\Zhat\pi)) \to L'_0(\Zhat\pi)$,  with cokernel $\cy 2$ detected by the Arf invariant. 

Similarly, we have an exact sequence
$$0 \to \cy 2 \to H^0(\Wh'(\Zhat\pi)) \to  L'_3(\Zhat\pi)\to 0$$
and  $H^0(\Wh'(\Zhat\pi)) \xrightarrow{\cong}  L'_1(\Zhat\pi)$.
\end{proof}

\begin{proof}[The proof of Theorem C] 
By assumption, we have $L'_0(\Zhat\pi)\cong H^1(\Wh'(\Zhat\pi)) =0$ and the boundary map $\bd$ in the sequence
$$ \dots \to L_1'(\Zhat\pi) \to H^1(SK_1(\Zhat\pi)) \xrightarrow{\bd} L_0^s(\Zhat\pi) \to L'_0(\Zhat\pi) \to \dots $$
 is non-zero (and surjective). Since $\bar\kappa_2^s(\pi) = \rho_2\circ \kappa_2^s(\pi)$ and $\bar\kappa_2^h(\pi)$ factors through $\bar\kappa'_2(\pi) = 0$, the conclusions of Theorem C follow directly from Theorem \ref{thm:kappatwo}.
\end{proof}

The next observation about the torsion in $L$-theory will be used in the proof of Theorem  B, Corollary D and Theorem E. Note that the map  $L_*(\Zpi) \to L_*(\bR \pi)$ has finite kernel and cokernel, and  $ L_*(\bR\pi)$ is detected by the multi-signature (see \cite[\S 7]{Wall:1974}, \cite[\S 2.2]{Wall:1976a}).
\begin{lemma}\label{lem:threethree} For $\pi$ a finite $2$-group, the torsion subgroup of $L_{2k}'(\Zpi)$ is detected by the natural map  
$L_{2k}'(\Zpi) \to L_{2k}'(\Zhat\pi)\oplus L_{2k}'(\Zpi^{\ab})$. 
\end{lemma}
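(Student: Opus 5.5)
The detection will come from Wall's arithmetic square relating $\Zpi$ to its $2$-adic completion, together with induction and detection properties of the relative $L$-groups for $2$-groups. Start with the exact sequence
$$\dots \to L'_{2k+1}(\Zhat\pi) \to L'_{2k+1}(\Zpi\to\Zhat\pi) \to L'_{2k}(\Zpi) \xrightarrow{\rho_2} L'_{2k}(\Zhat\pi)\to \dots$$
with weakly simple decorations, as in \cite[1.2]{Wall:1976a}. Suppose $x$ is a torsion element of $L'_{2k}(\Zpi)$ with $\rho_2(x)=0$. Then $x$ is the image of some $y \in L'_{2k+1}(\Zpi\to\Zhat\pi)$. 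The aim is to show that $x$ is detected by its image in $L'_{2k}(\bZ\pi^{\ab})$, i.e.\ that such an $x$ maps non-trivially there unless it is zero.

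By \cite[\S 2, 5]{Wall:1976a}, the relative group is computed from $L$-theory of $\widehat{\bbQ}_2\pi$ and $\bbQ\pi$. It splits as a sum over the simple components of $\bbQ\pi$, that is, over the irreducible rational representations of $\pi$. The key input is the following consequence of Wall's computations and the detection theorems of \cite{Hambleton:1990} for $2$-groups. For $2k \equiv 0,2 \pmod 4$, the summands of $L'_{2k+1}(\Zpi\to\Zhat\pi)$ that can contribute torsion to $L'_{2k}(\Zpi)$ come only from the components of type $\bbQ$, $\bbQ(\zeta_{2^r})$, etc. I need to pin down which ones. The expectation is that the torsion contributed by non-abelian (real or quaternionic type, or unitary type of dimension $>1$) components is already seen by the $2$-adic group, while the remaining torsion comes from one-dimensional characters. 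Those characters factor through $\pi^{\ab}$, so the corresponding part of the relative group is natural under the projection $\pi\to\pi^{\ab}$, and the map $L'_{2k+1}(\Zpi\to\Zhat\pi)\to L'_{2k+1}(\bZ\pi^{\ab}\to\Zhat\pi^{\ab})$ is an isomorphism on those summands.

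Second, I will compare the two sequences for $\pi$ and $\pi^{\ab}$ via naturality. The image of $y$ in the relative group of $\pi^{\ab}$ maps to the image of $x$ in $L'_{2k}(\bZ\pi^{\ab})$. Since $L'_{2k}(\bR\pi)$ is torsion free and detected by multisignatures, torsion elements lie in the kernel to $L'_{2k}(\bR\pi)$. So I can restrict attention to the torsion part coming from the cokernel of $L'_{2k+1}(\Zhat\pi)\oplus L'_{2k+1}(\bR\pi) \to$ relative group. Here Lemma \ref{lem:threetwo} controls $L'_{odd}(\Zhat\pi)\cong H^0(\Wh'(\Zhat\pi))$ modulo $\cy 2$, which keeps the bookkeeping finite.

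The main obstacle is the representation-theoretic step: showing that every non-abelian simple component contributes nothing to the torsion kernel of $\rho_2$. Concretely, I need that, for each such component $A$ with maximal order $\Lambda$, the map $L'_{2k}(\Lambda)_{tors}\to L'_{2k}(\widehat\Lambda_2)$ is injective. For $2$-groups, $\bbQ\pi$ has components that are matrix algebras over $\bbQ(\zeta_{2^r}+\zeta_{2^r}^{-1})$, $\bbQ(\zeta_{2^r})$ or quaternion algebras over these fields. Their rings of integers have a unique dyadic prime and odd class number (Weber). I would first try to use these facts, which make the Hasse principle reduce the torsion to the single $2$-adic place. This argument fails for the rational $\bbQ$ components, which are exactly the ones through $\pi^{\ab}$.
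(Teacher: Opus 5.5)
\textbf{There is a genuine gap.} The step that carries the lemma is left as an ``expectation'', and the tool you propose for it does not address the real issue.

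First, you never identify the torsion of the relative groups. By Wall \cite[5.2.2]{Wall:1976a}, the torsion of $L'_1(\Zpi\to\Zhat\pi)$ is a sum of copies of $\cy 2$, one for each simple factor of $\bbQ\pi$ of type $O$. The torsion of $L'_3(\Zpi\to\Zhat\pi)$ has one $\cy 2$ for each factor of type $Sp$. So for $L'_0$ the relevant factors are the linear characters $\pi\to\{\pm1\}$ and the dihedral-type factors. For $L'_2$ only quaternionic-type factors occur, and $\pi^{\ab}$ plays no role at all. This is not the picture in your second paragraph, and the cyclotomic-centre components you list do not enter.

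Two side remarks. The lift $y$ of $x$ is automatically torsion, because $L'_{2k+1}(\Zhat\pi)$ is finite. Also, $L'_{2k+1}(\bR\pi)$ does not map to the relative group, so the ``cokernel'' in your third paragraph is not meaningful.

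Second, and more seriously, a torsion class $y$ in the relative group has $\bd(y)=0$ exactly when $y\in\Image\{\psi\colon L'_{2k+1}(\Zhat\pi)\to L'_{2k+1}(\Zpi\to\Zhat\pi)\}$. The group $L'_{2k+1}(\Zhat\pi)$ does not decompose over the simple factors of $\bbQ\pi$, and $\Zpi$ is not a product of maximal orders. So even if you had injectivity of $L'_{2k}(\Lambda)_{\mathrm{tors}}\to L'_{2k}(\widehat\Lambda_2)$ for each maximal order $\Lambda$ (via Weber's odd class number and a Hasse principle), it would only control the image of $x$ in $\bigoplus L'_{2k}(\Lambda)$. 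Nothing in your argument shows that a torsion class in $\ker\rho_2$ is determined by that image.

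The missing idea is an induction argument. The paper uses the Quadratic Generation Theorem \cite[1.B.7, 1.B.8(iii)]{Hambleton:1990}: the torsion of the relative group is generated by images of torsion from relative groups of basic subquotients of $\pi$. By naturality of the exact sequence, this reduces everything to explicit computations for basic groups:
\begin{itemize}
\item $L'_0(\bZ D(2^r))$, $r\geq 3$, is torsion free. Hence dihedral-type relative torsion lies in $\Image\psi_1$ and has $\bd=0$.
\item $L'_2(\bZ Q(2^r))\cong\cy 2$ is detected by the trivial group, i.e.\ by the Arf invariant, which survives $2$-adically. Hence all type $Sp$ relative torsion lies in $\Image\psi_3$.
\item The only remaining classes, for $L'_0$, come from the linear type $O$ characters, and these are seen through $\pi\to\pi^{\ab}$.
\end{itemize}
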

\begin{proof} We first consider the case $k$ even. Suppose that $x \in \ker\{L_0'(\Zpi) \to  L_0'(\Zhat\pi)\oplus L_0'(\Zpi^{\ab})$ is a torsion element. 
In  the long exact sequence
$$ \dots \to L_1'(\Zhat\pi) \xrightarrow{\psi_1} L_1'(\Zpi \to \Zhat\pi) \xrightarrow{\bd} L_0'(\Zpi) \to L_0'(\Zhat\pi)\to \dots$$
a torsion element $x \in \ker\{ L_0'(\Zpi) \to L_0'(\Zhat\pi)\}$ is the image $\bd(y) = x$ for some torsion element $y \in L_1'(\Zpi \to \Zhat\pi)$. By Wall
\cite[5.2.2]{Wall:1976a}, the torsion subgroup of the relative $L$-group is a direct sum of copies of $\cy 2$, one for each type $O$ simple factor of the rational group algebra $\bQ\pi$. We can apply the Quadratic Generation Theorem \cite[Theorem 1.B.7 \& Example 1.B.8 (iii)]{Hambleton:1990} to show that the torsion subgroup of $ L_1'(\Zpi \to \Zhat\pi)$ is generated by the images of torsion elements in the relative $L$-groups of basic subquotients of $\pi$. 

Since the linear type $O$ representations are determined by the map $\pi \to \pi^{\ab}$, we only need to consider the (non-abelian) dihedral subquotients. However,  the Wall groups $L_0'(\bZ D(2^{r}))$, $r \geq 3$, are torsion free
 \cite[Theorem 5.2.3]{Wall:1976a}. It follows that $y \in L_1'(\Zpi \to \Zhat\pi)$ lies in the image of $\psi_1$ and hence $\bd(y) = x = 0$.

For $k$ odd, a similar argument works for $L'_2(\Zpi)$ since the  relative group $L_3'(\Zpi \to \Zhat\pi)$ is a sum of copies of $\cy 2$ over representations of type $Sp$, and hence is generated by quaternion subquotients. However, $L_2'(\bZ Q(2^{r})) \cong \cy 2$, for $r \geq 3$, is detected by the trivial group  \cite[Theorem 5.2.3]{Wall:1976a} and the result follows.
\end{proof}

\subsection{The $2$-adic $\kappa'_2$ homomorphisms}

\begin{proposition}[{\cite[7.2]{Hambleton:1988}}]\label{prop:kappatwoprime}
Let $\pi$ be a finite $2$-group.
There is a commutative diagram:
$$
\vcenter{\xymatrix{ H_2(\pi;\cy 2) \ar[d]_{\kappa'_2}\ar[r]^{\beta} & 
H^0(\pi^{\ab}) \ar[r]^(0.4)\delta & H^1(\Wh'(\Zhat\pi)) \ar[d]^\bd\\
L'_0(\Zpi) \ar[rr]^{\rho_2}&& L'_0(\Zhat\pi)
}}$$
\end{proposition}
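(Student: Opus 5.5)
Since $\rho_2\circ\kappa'_2 = \bar\kappa'_2$, the plan is to compute this composition through the $2$-adic Ranicki--Rothenberg sequence, which identifies $L'_0(\Zhat\pi)$ with $H^1(\Wh'(\Zhat\pi))$ via $\bd$ (Lemma \ref{lem:threetwo}). The question then becomes which element of $H^1(\Wh'(\Zhat\pi))$ the $2$-adic surgery obstruction of a characteristic problem representing $x\in H_2(\pi;\cy 2)$ determines. I would first reduce to $L^h$ and $L^s$ information. The image of $\bar\kappa'_2(x)$ in $L^h_0(\Zhat\pi)=\cy 2$ is the discriminant. This vanishes, because $\kappa^h_2$ factors through the transfer from $L^h_0(\Rhat\pi)$ (the factorization $\kappa^s_j=\trf\circ\wkappas_j$ of \cite[Theorem 1.16]{Hambleton:1988}), and that transfer is zero on the discriminant. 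So $\bar\kappa'_2(x)=\bd(z)$ for a unique $z\in H^1(\Wh'(\Zhat\pi))$, and the task is to identify $z$.

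To identify $z$, I would lift via $R=\bZ[\varepsilon]$. The class $\wkappas_2(x)\in L_0(\Rhat\pi)$ is a form whose $2$-adic discriminant, once the decoration is relaxed, lies in $K_1(\Rhat\pi)$. The transfer to $\Zhat\pi$ then converts this torsion into an element of $\overline{I(\Zhat\pi)}$, or of $\Wh(\Zhat\pi)$, that is self-conjugate modulo norms. Concretely, I would compute with the standard generators. By naturality in $\pi$, every class in $H_2(\pi;\cy 2)$ is detected through maps from products of cyclic groups (or via transfers to subgroups), so it suffices to treat $\pi=\cy{2^a}\times\cy{2^b}$ and the explicit Morgan--Pardon type form. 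There the discriminant of the lifted form is a unit of the form $\pm g$ with $g\in\pi^{\ab}$, and $g$ is exactly the Bockstein image $\beta(x)\in H^0(\pi^{\ab})=\pi^{\ab}/(\pi^{\ab})^2$. In the exact sequence $0\to\Wh'(\Zhat\pi)\to\overline{I(\Zhat\pi)}\to\pi^{\ab}\to 0$, a self-dual element of $\overline{I}$ lying over $g$ determines an obstruction class in $H^1(\Wh')$, and by definition this class is $\delta(\beta(x))$.

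Finally I would check naturality. Both composites $\bd\circ\delta\circ\beta$ and $\rho_2\circ\kappa'_2$ are natural with respect to group homomorphisms, since the Rothenberg sequences and the coboundary $\delta$ are functorial. Agreement on the generating classes therefore gives agreement for all $\pi$. The main obstacle is the middle step: showing that the $2$-adic torsion of the transferred form from $\Rhat\pi$ lands in $\overline{I(\Zhat\pi)}$ over the specific element $\beta(x)$, with no further correction from $\Wh'(\Zhat\pi)$. My first approach would be to compute the determinant of the explicit rank-two $R$-form $\begin{pmatrix}\varepsilon & g\\ \bar g & \varepsilon'\end{pmatrix}$ and use Oliver's logarithm description of $\overline{I(\Zhat\pi)}$ to pin down its class.
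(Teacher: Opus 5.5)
\textbf{There is a genuine gap in your reduction step.} You use naturality to reduce to $\pi=\cy{2^a}\times\cy{2^b}$, claiming that classes in $H_2(\pi;\cy 2)$ are detected through maps from products of cyclic groups.

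This claim is false in general. Oliver's isomorphism $H_2(\pi;\bZ)/H_2^{\ab}(\pi)\cong SK_1(\Zhat\pi)$ shows that $H_2$ is not generated by abelian subgroups.

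The more serious problem is that such a check can carry no information at all. For abelian $A$ one has $H^1(\Wh'(\Zhat A))=0$, hence $L'_0(\Zhat A)=0$ by Lemma \ref{lem:threetwo}. Since $\beta$, $\delta$, $\bd$, $\rho_2$ and $\kappa'_2$ are all natural, both composites in the diagram vanish on every class induced from an abelian subgroup. Likewise, restriction to abelian subgroups lands in zero groups. In fact $\delta(\beta(x))\neq 0$ forces $x$ not to come from any abelian subgroup.

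So verifying the diagram on $\cy{2^a}\times\cy{2^b}$ only confirms it where it is trivially true. The Morgan--Pardon form, for instance, has zero image in $L'_0(\Zhat\pi)=0$. Such a check says nothing about the non-abelian groups with $H^1(\Wh'(\Zhat\pi))\neq 0$, which are the whole content of the statement. Your remaining step is to identify the transferred $\Rhat$-discriminant with $\delta(\beta(x))$ ``with no further correction'' by an explicit determinant computation. You leave this as an open obstacle, and for the relevant non-abelian classes there is no explicit form to compute with.

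\textbf{How the paper closes the gap.} The reduction has to happen on the \emph{target} side, and at the level of $\Rhat\pi$.
\begin{enumerate}
\item By the $2$-adic Detection Theorem, $L^{\tilde Y}_2(\Rhat\pi)\cong H^0(\pi^{\ab})$ via the discriminant $\tau$, and $H^0(\pi^{\ab})$ is detected by quotient maps $\pi\to C_2$. So $\tau\circ\rho_2\circ\wkappa'_2=\beta$ only needs checking for $\pi=C_2$, where every map is an isomorphism.
\item Because $\varepsilon+\bar\varepsilon=1$, the Tate cohomology of $\overline{I(\Rhat\pi)}$ vanishes. Hence the coboundary $\tilde\delta\colon H^0(\pi^{\ab})\to H^1(\Wh^{\tilde Y}(\Rhat\pi))$ and the Rothenberg boundary $\bd$ over $\Rhat\pi$ are isomorphisms.
\item The transfer gives a map of extensions from $\overline{I(\Rhat\pi)}$ to $\overline{I(\Zhat\pi)}$ over the identity of $\pi^{\ab}$, so $\delta=\trf\circ\tilde\delta$. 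Naturality of the Rothenberg sequences under $\trf$ then finishes the proof.
\end{enumerate}
This structural argument replaces your proposed determinant computation entirely.

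Two smaller points. First, $\wkappa'_2(x)$ lies in $L_2(R\pi)$, not $L_0$, because the transfer raises degree by two; so the form you would compute with is skew-symmetric. Second, your first-paragraph discriminant argument is unnecessary: Lemma \ref{lem:threetwo} already makes $\bd\colon H^1(\Wh'(\Zhat\pi))\to L'_0(\Zhat\pi)$ an isomorphism.
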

\begin{remark}
We will lift this diagram to $R\pi$ and use the factorization $\kappa_j^s = \trf\circ \wkappa_j^s$, where
 $\wkappa^s_j \colon H_j (\pi;\cy 2) \to L_j^s(R\pi)$ and $\trf\colon L_j^s(R\pi) \to L_{j+2}^s(\Zpi)$. The  quadratic extension ring $R = \bZ[\varepsilon]$, with $\varepsilon = (1+ \sqrt{5})/2$ has the important properties that $\varepsilon + \bar\varepsilon = 1$ and $\varepsilon\bar\varepsilon = -1$ (see \cite[Theorem 1.16]{Hambleton:1988}).
 \end{remark}
 
The following calculations will be needed (particularly for  $\pi = C_2$ of order two). The details can be found in \cite[3.3, 3.11, 4.5, 4.11]{Hambleton:1988}:
\begin{enumerate}
\item  $L^{\tilde Y}_i(\Rhat\pi)  \cong  L^{\tilde Y}_i(\Rhat) \oplus H^0(\pi^{\ab})$;
\item   $L^{\tilde Y}_i(\Rhat)  = \cy 2, 0, 0, \cy 2$;
\item  $L^{\tilde Y}_3(\Rpi \to \Rhat\pi) = (\cy 2)^{s_2}$, where $s_2$ counts the type $O$ factors in $\bQ\pi$;
\item  $L^{\tilde Y}_1(\Rpi \to \Rhat\pi) = (\cy 2)^{s_0}$, where $s_0$ counts the type $Sp$ factors in $\bQ\pi$;
\item $L^{\tilde Y}_i(R)  = \cy 2, \cy 2, 0, \cy 2 $;
\item  $L^{\tilde Y}_i(R[C_2])  = (\cy 2)^2, (\cy 2)^3,  \cy 2, 0 $.
\end{enumerate}

The round torsion decoration $Y = R^\times \oplus \pi^{\ab} \oplus SK_1(A\pi) \subseteq K_1(A\pi)$, for $A = R$ or $\Rhat$, and $\widetilde Y$ denotes the quotient dividing out $K_1(\bZ) = \{\pm 1\}$. In the proof of Theorem B, we will also need the round \emph{simple} torsion decoration $V = \Rhat^\times \oplus \pi^{\ab}\subseteq K_1(\Rhat\pi)$ and the surgery groups $L^s_*(\Rhat\pi) := L^{\tilde V}_*(\Rhat\pi)$.

\begin{proof}[The proof of Proposition \ref{prop:kappatwoprime}]
The argument will be divided into two parts. 
First we claim that the following diagram
\eqncount
\begin{equation}\label{diag:threefive}
\vcenter{\xymatrix{H_2(\pi;\cy 2) \ar[d]_{\wkappa'_2}\ar[r]^{\beta} & 
H^0(\pi^{\ab}) \\
L^{\tilde Y}_2(\Rpi) \ar[r]^{\rho_2}& L^{\tilde Y}_2(\Rhat\pi) \ar[u]_\tau
}}\end{equation}
 commutes, where $\beta\colon H_2(\pi;\cy 2) \to H^0(\pi^{\ab}) = \{ [g] \in \pi^{\ab}:  [g^2] = 1\}$ is induced by the Bockstein and $\tau$ by the discriminant.
   By the 2-adic Detection Theorem \cite[Theorem 3.4]{Hambleton:1988} and naturality, it is enough to show that the diagram commutes for $\pi = C_2$ of order two. In that case,  $L^{\tilde Y}_2(\Rpi) \cong \cy 2$  maps isomorphically onto 
  $L^{\tilde Y}_2(\Rhat\pi) = \cy 2$, since $\tilde\psi_3 \colon L^{\tilde Y}_3(\Rhat\pi) \to L^{\tilde Y}_3(\Rpi \to \Rhat\pi)$ is an isomorphism  (see \cite[Theorem 4.10]{Hambleton:1988}) and $\pi = C_2$ has two type $O$ representations.  The map $\beta$ is an isomorphism, and the discriminant $\tau\colon L^{\tilde Y}_2(\Rhat\pi)  \to H^0(\pi^{\ab})$ is also an isomorphism. Therefore the diagram commutes for all finite $2$-groups.
  
Next we consider the following diagram (for any finite $2$-group):
\eqncount
\begin{equation}\label{diag:threesix}
\vcenter{\xymatrix{  
H^0(\pi^{\ab})\ar@/^1pc/[dr]^{\tilde\delta}_\cong& \\
 L^{\tilde Y}_2(\Rhat\pi) \ar[u]_\tau^\cong \ar[d]^{\trf}& H^1(\Wh^{\tilde Y}(\Rhat \pi)) \ar[l]^(0.6)\bd_(0.6)\cong \ar[d]^{\trf}\\
 L'_0(\Zhat\pi) & H^1(\Wh'(\Zhat \pi)) \ar[l]^(0.6)\bd
}}
\end{equation}
where the map $\tilde \delta$ is the coboundary induced in Tate cohomology by the upper sequence in the diagram (see \cite[Theorem 2]{Oliver:1980a}):

\eqncount
\begin{equation}\label{diag:threeseven}
\vcenter{\xymatrix{ 0 \ar[r]& \Wh^{\tilde Y}(\Rhat \pi)\ar[r]\ar[d]& \overline{I(\Rhat\pi)} \ar[r]\ar[d]& \pi^{\ab}\ar[r]\ar@{=}[d]& 0\\
0 \ar[r]& \Wh'(\Zhat \pi)\ar[r]& \overline{I(\Zhat\pi)} \ar[r]& \pi^{\ab}\ar[r]& 0}}
\end{equation}
Since $\varepsilon + \bar\varepsilon = 1$, we have $H^*(\overline{I(\Rhat\pi)}) = 0$ and hence $\tilde\delta$ is an isomorphism. By naturality, 
$\delta = \trf\circ \tilde \delta \colon H^0(\pi^{\ab})\to  H^1(\Wh'(\Zhat \pi)) $. By combining diagrams \eqref{diag:threefive} amd \eqref{diag:threesix} above, the proof is complete.
\end{proof}

  \begin{proof}[The proof of Theorem B]
 The parts (ii) and (iii) follow directly from the properties of the lifted $\wkappas_*$ listed in \cite[Theorem 6.8]{Hambleton:1988}, since $L^{\tilde Y}_*(\Rhat\pi)$ is detected by $\pi \to \pi^{\ab}$ (by \cite[Theorem 3.4]{Hambleton:1988}) and 
 $L^s_*(\Rhat\pi) = L^{\tilde Y}_*(\Rhat\pi)$ for $\pi$ abelian (since the transfer $\trf\colon SK_1(\Rhat\pi) \to SK_1(\Zhat\pi)$ is an isomorphism by \cite[8.7]{Oliver:1988a} ). For part (i) note that $\kappa^s_1$ is generated/detected by cyclic groups, and \cite[3.3]{Wall:1976a} shows that for cyclic $2$-groups $L'_3(\Zpi) \cong L'_3(\Zhat\pi)$.
 The last statement follows from Lemma \ref{lem:threethree}. Some examples of groups $\pi$ satisfying all the required conditions are given in Corollary \ref{cor:Ethree}.
  \end{proof}
  
  \begin{proof}[The proof of Corollary D] The first step is to find a group $\pi$ satisfying the conditions listed in Theorem C. This will show that $\kappa^s_2(\pi) \neq 0$ and that $\bar\kappa_2^h(\pi) = 0$. By Lemma \ref{lem:threethree},  the homomorphism $\kappa'_2(\pi)$ is detected by the map $L'_0(\Zpi) \to L'_0(\Zpi^{\ab})$. However, by naturality the image of $\kappa'_2(\pi)$ in $L'_0(\Zpi^{\ab})$ equals the image of  $\kappa'_2(\pi^{\ab})$. By Theorem A, we can conclude that $\kappa'_2(\pi^{\ab}) = \kappa^h_2(\pi^{\ab}) = 0$ whenever $\pi^{\ab}$ is an elementary abelian group, and the result follows. See Examples \ref{ex:corDgroups} for a list of groups of order 128 satisfying Corollary D. 
  
  \end{proof}  
\subsection{The $\kappa'_4$ homomorphisms}
In the following diagram, $s_*\colon H_4(\pi;\cy 2) \to H_2(\pi;\cy 2)$ denotes the Kronecker dual of the map $Sq^2$ in mod 2 cohomology.
\begin{proposition}[{\cite[7.3]{Hambleton:1988}}]\label{prop:kappafourprime}
Let $\pi$ be a finite $2$-group.
There is a commutative diagram:
\eqncount
\begin{equation}
\vcenter{\xymatrix{ H_4(\pi;\cy 2) \ar[dr]_{\kappa'_4} \ar[r]^{s_*} &H_2(\pi;\cy 2) \ar[r]^{\beta} & 
H^0(\pi^{\ab}) \ar[r]^(0.4)\delta & H^1(\Wh'(\Zhat\pi) \ar[dl]^\bd\\
& L'_2(\Zpi) \ar[r]^{\rho_2}& L'_2(\Zhat\pi)&
}}\end{equation}
\end{proposition}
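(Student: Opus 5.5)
We will follow the two-step pattern used in the proof of Proposition \ref{prop:kappatwoprime}: lift $\kappa'_4$ through the quadratic extension ring $R$, and then compare the $2$-adic images by means of the discriminant. By the factorization $\kappa'_4 = \trf\circ\wkappa'_4$, with $\wkappa'_4\colon H_4(\pi;\cy 2)\to L^{\tilde Y}_4(\Rpi) = L^{\tilde Y}_0(\Rpi)$ and $\trf\colon L^{\tilde Y}_0(\Rpi)\to L'_2(\Zpi)$, it suffices to prove two things. The first is that the square
$$\xymatrix{H_4(\pi;\cy 2)\ar[r]^{\beta\circ s_*}\ar[d]_{\wkappa'_4} & H^0(\pi^{\ab})\\
L^{\tilde Y}_0(\Rpi)\ar[r]^{\rho_2} & L^{\tilde Y}_0(\Rhat\pi)\ar[u]_{\tau}}$$
commutes, where $\tau$ is the discriminant. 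The second is that the transfer $\trf\colon L^{\tilde Y}_0(\Rhat\pi)\to L'_2(\Zhat\pi)$, restricted to the $H^0(\pi^{\ab})$ summand, equals $\bd\circ\delta\circ\tau$.

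For the first square we will use the $2$-adic Detection Theorem \cite[Theorem 3.4]{Hambleton:1988}. Since $L^{\tilde Y}_*(\Rhat\pi)\cong L^{\tilde Y}_*(\Rhat)\oplus H^0(\pi^{\ab})$ is detected by the maps $\pi\to\pi^{\ab}\to C_2$, naturality of $\wkappa'_4$, $s_*$ and $\beta$ reduces the claim to $\pi=C_2$. For $\pi = C_2$ we have $H_4(C_2;\cy 2)=\cy 2$, and $s_*$ is dual to $Sq^2(x^2)=x^4$, so $s_*$ is an isomorphism onto $H_2(C_2;\cy 2)$; moreover $\beta$ is an isomorphism there. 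By \cite[Theorem 6.8]{Hambleton:1988} the iterated-squaring compatibility $\wkappas_{4}=\wkappas_2\circ s_*$ holds after passing to the $2$-adic/abelianized groups; this is exactly the statement used for Theorem B(iii). Granting it, the square for $C_2$ follows from diagram \eqref{diag:threefive}, already verified in the proof of Proposition \ref{prop:kappatwoprime}, since $L^{\tilde Y}_0(\Rhat C_2)$ and $L^{\tilde Y}_2(\Rhat C_2)$ both contain the $H^0$ summand detected by $\tau$.

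For the second part we repeat diagram \eqref{diag:threesix} in the shifted degree. The Ranicki--Rothenberg boundary $\bd\colon H^1(\Wh^{\tilde Y}(\Rhat\pi))\to L^{\tilde Y}_0(\Rhat\pi)$ is again an isomorphism onto the $H^0(\pi^{\ab})$ summand, because the ``round'' groups of $\Rhat$ are as listed above. The coboundary $\tilde\delta$ is an isomorphism since $H^*(\overline{I(\Rhat\pi)})=0$. Naturality of the Rothenberg sequences under the transfer $\Rhat\pi\to\Zhat\pi$, which shifts $L$-degree by $2$, then gives $\trf\circ\bd=\bd\circ\trf$ and $\trf\circ\tilde\delta=\delta$. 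Combining this with the first square, and with $\rho_2\circ\trf=\trf\circ\rho_2$, yields $\rho_2\circ\kappa'_4=\bd\circ\delta\circ\beta\circ s_*$, which is the stated diagram.

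The main obstacle is the reduction to $C_2$ for the degree-$4$ lift: one must check that $\tau\circ\rho_2\circ\wkappa'_4$ really factors through $s_*$. Detection only sees the images of $H_4(\pi)$ in $H_4(C_2)$, and the functional $H_4(\pi;\cy 2)\to\cy 2$ obtained by pulling back $x^4$ along a character $\pi \to C_2$ is exactly $\langle (x^2)^2,-\rangle=\langle Sq^2 x^2,-\rangle$. So the identification amounts to the formula $x^4 = Sq^2(x^2)$, and the reduction works provided the $C_2$ computation of $\wkappa'_4$ from \cite[6.8]{Hambleton:1988} is nonzero. We expect to extract that nonvanishing from \cite[4.11]{Hambleton:1988} by periodicity in $L^{\tilde Y}_*(R[C_2])$.
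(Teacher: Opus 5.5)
Your overall architecture matches the paper's: lift through $R$, prove $\tau\circ\rho_2\circ\wkappa'_4=\beta\circ s_*$ by reducing to $C_2$, then push forward with the degree-shifted transfer diagram and $\tilde\delta$. But the one non-formal step, the case $\pi=C_2$, is not established.

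\textbf{The $C_2$ case.} The ``compatibility'' $\wkappas_4=\wkappas_2\circ s_*$ compares elements of the different groups $L^{\tilde Y}_0(\Rpi)$ and $L^{\tilde Y}_2(\Rpi)$. It only makes sense after applying $\tau\circ\rho_2$. Read that way, and combined with diagram \eqref{diag:threefive}, it is equivalent to the square you want to prove, so ``granting it'' is circular. It is also not Theorem B(iii), which relates $\bar\kappa^s_{2^{r+2}}$ to $\bar\kappa^s_4$ for $r\ge 0$, all with values in $L_2$, and never links degree $4$ to degree $2$.

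Your fallback cannot work either. $L^{\tilde Y}_*(R[C_2])=(\cy 2)^2,(\cy 2)^3,\cy 2,0$ is not $2$-periodic. The $4$-periodicity of the $L$-groups gives no relation between the homomorphisms $\wkappa'_4$ and $\wkappa'_0$, and $\wkappa'_0$ lands in the trivial-group summand anyway.

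The paper instead checks $C_2$ directly in degree $0$:
\begin{itemize}
\item $\rho_2\colon L^{\tilde Y}_0(R[C_2])\cong(\cy 2)^2\to L^{\tilde Y}_0(\Rhat[C_2])$ is an isomorphism, because $\bQ C_2$ has no type $Sp$ factors, so $L^{\tilde Y}_1(R[C_2]\to\Rhat[C_2])=0$.
\item One summand is $L^{\tilde Y}_0(R)$, coming from the trivial group, and $\wkappa'_4$ has zero component there since $H_4(1;\cy 2)=0$.
\item Hence $\rho_2\wkappa'_4(C_2)$ lies in the $H^0(C_2)$ summand, where $\tau$ is the identity. Since $s_*$ and $\beta$ are isomorphisms, the square commutes.
\end{itemize}
The nonvanishing of $\wkappa'_4(C_2)$ is used implicitly there as input from the lifted computations of \cite{Hambleton:1988}, not deduced from degree $2$.

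The same trivial-group observation is missing from your final assembly. You describe $\trf$ only on the $H^0(\pi^{\ab})$ summand of $L^{\tilde Y}_0(\Rhat\pi)$, but never show that $\rho_2\wkappa'_4$ lands in that summand.

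\textbf{The reduction step.} Characters $\pi\to\pi^{\ab}\to C_2$ do not detect $H^0(\pi^{\ab})=\pi^{\ab}[2]$. For $\pi=\cy 4$, the only nontrivial character induces the zero map on $H^0(\cy 4)=\{0,2\}$. The pullback of the class $x^4\in H^4(C_2;\cy 2)$ is zero in $H^4(\cy 4;\cy 2)$, because the degree-one class squares to zero there. Yet $\beta\circ s_*$ is an isomorphism for $\cy 4$.

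So your identification via $x^4=Sq^2(x^2)$ misses exactly the order-two classes of $\pi^{\ab}$ that are divisible by $2$. These can survive in $H^1(\Wh'(\Zhat\pi))$; compare the subgroup $T/C$ in Section~\ref{sec:six}, which is nonzero for $SG[32,4]$.

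The correct reduction uses cyclic quotients and inclusions:
\begin{itemize}
\item Both composites in the square are natural in $\pi$.
\item $\pi^{\ab}[2]$ is detected by cyclic quotients $\pi\to\cy{2^k}$.
\item The inclusion $C_2\subset\cy{2^k}$ induces isomorphisms on $H_{2m}(-;\cy 2)$ and on $H^0$.
\end{itemize}
Together these reduce everything to $C_2$.
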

\begin{proof} The argument is similar to that for the $\kappa'_2$ diagram. We first lift to a diagram over $R$, where it is enough to check commutativity for $\pi = C_2$.
\eqncount
\begin{equation}\label{diag:threenine}
\vcenter{\xymatrix{H_4(\pi; \cy 2) \ar[r]^{s_*} \ar[dr]_{\kappa'_4} &H_2(\pi;\cy 2) \ar[r]^{\beta} & 
H^0(\pi^{\ab}) \\
& L^{\tilde Y}_0(\Rpi) \ar[r]^{\rho_2}& L^{\tilde Y}_0(\Rhat\pi) \ar[u]_\tau
}}\end{equation}
In this diagram for $\pi = C_2$, the map  $L^{\tilde Y}_0(\Rpi) \cong (\cy 2)^2$ to 
  $L^{\tilde Y}_0(\Rhat\pi) = (\cy 2)^2$ is an isomorphism (since $\bQ\pi$ has no type $Sp$ factors),  with one summand detected by the the trivial group $L^{\tilde Y}_0(R) = \cy 2$. Note that the discriminant map $\tau\colon L^{\tilde Y}_0(\Rhat\pi)  \to H^0(\pi^{\ab})$ is induced by the projection $\pi \to \pi^{\ab}$ (just the identity for $\pi = C_2$). Since the image of $\kappa'_4(\pi)$ is zero for the trivial group, and both maps $s_*$ and $\beta$ are isomorphisms, the diagram commutes.
  
  The remaining part of the proof uses a similar diagram to \eqref{diag:threesix}:
\eqncount
\begin{equation}\label{diag:three10}
\vcenter{\xymatrix{  
H^0(\pi^{\ab})\ar@/^1pc/[dr]^{\tilde\delta}_\cong& \\
 L^{\tilde Y}_0(\Rhat\pi) \ar@{->>}[u]_\tau \ar[d]^{\trf}& H^1(\Wh^{\tilde Y}(\Rhat \pi)) \ar@{->>}[l]^(0.6)\bd \ar[d]^{\trf}\\
 L'_2(\Zhat\pi) & H^1(\Wh'(\Zhat \pi)) \ar[l]^(0.6)\bd
}}
\end{equation}
   and diagram \eqref{diag:threeseven} together with naturality under $\trf$ to complete the argument. 
\end{proof}

\section{Non-vanishing examples for $\kappa_2$}\label{sec:three}
In this section we establish some conditions for verifying the statements of Theorem C and Corollary D. We should point out that when $H^1(\Wh'(\Zhat\pi)) = 0$, 
we get $\bar\kappa'_2(\pi) = 0$ and so  the image of $\bar\kappa^s_2(\pi)$ lies in the image of $\bd\colon H^1(SK_1(\Zhat\pi)) \to L_0^s(\Zhat\pi)$. Since $\theta\colon H_2(\pi;\bZ) \to   H^1(SK_1(\Zhat\pi))$ is surjective, we get $\bar\kappa^s_2(\pi) \neq 0$ if and only if $\Image \bd \neq 0$.

\subsection{Group theoretical conditions to construct examples}

Here we define a set $\Lambda(\alpha)$ and two maps $u_{\alpha}$ and
$\delta^{\alpha}$ for any central extension $$ \sigma { \rightarrowtail
} \wpi  \overset{ \alpha }{ \twoheadrightarrow } \pi   $$
where $\wpi $ is a $2$-group. Given such an extension, take
any surjective group homomorphism $f\colon F\rightarrow \wpi$ where
$F$ is a free group. Let $R$ be the kernel of the composition $\alpha
\circ f$. Since $f(R)\subseteq \sigma \subseteq Z(\wpi)$, we have
$ f([F,R])=1$ and
therefore we get a
well-defined homomorphism from $F/[F,R]$ to $\wpi$. Let
$u_{\alpha}$ denote the restriction of this map to  $\pi\wedge \pi$
where $\pi\wedge \pi=[F,F]/[F,R]$ (see proof of Statement 11.4.16 in
\cite{Robinson:1993} ). The map $$u_{\alpha}\colon \pi \wedge \pi
\rightarrow \wpi$$ restricts to a group homomorphism
$$\delta^{\alpha}\colon H_2(\pi;\bZ)=\frac{R\cap
[F,F]}{[F,R]}\rightarrow \sigma \, .$$
Let $q_1,q_2$ be in $\pi$ and $\widehat{q}_1,\widehat{q}_2$ be their
lifts in $F$. Then  $[\widehat{q}_1,\widehat{q}_2]$ is in $[F,F]$ and
defines an element $q_1\wedge q_2$ in $\pi\wedge \pi$ that only depends
on $q_1,q_2$. If we further assume that $q_1q_2=q_2q_1$ then we have
$q_1\wedge q_2$ in $H_2(\pi;\bZ)$. Now we define a subset
$\Lambda(\alpha)$ of $H_2(\pi;\bZ)$ as follows:
$$\Lambda(\alpha)=\left\{ \, q_1\wedge q_2 \,\left|\,  q_1,q_2\in \pi
\textup{ and }q_1q_2=q_2q_1 \right. \right\}.$$
We have
$$SK_1(\Zhat\wpi)\cong \frac{\ker (\delta
^{\alpha})}{\left\langle \ker (\delta ^{\alpha}) \cap \Lambda(\alpha)
\right\rangle}$$
by Part (i) of Lemma 22 in \cite{Oliver:1980a}. The following is a special case of a result of Oliver.

\begin{theorem}[{\cite[Proposition 16]{Oliver:1980a}}]\label{thm:t1}
Let $\wpi$ be a $2$-group and $ \sigma { \rightarrowtail  }
\wpi  \overset{ \alpha }{ \twoheadrightarrow } \pi   $ a
central group extension. Assume that $1\neq \sigma\subseteq
[\wpi,\wpi]$ and $\sigma\cap \Omega (\alpha) = 1$
where $$\Omega (\alpha)  = \left\{
\,[\widetilde{g}_1,\widetilde{g}_2]\,\, \left| \,\,
\widetilde{g}_1,\widetilde{g}_2\in \wpi \textup{ and }
\alpha([\widetilde{g}_1,\widetilde{g}_2])=1\, \right. \right\}.$$ Then
$SK_1(\Zhat\wpi)\overset{ \alpha_* }{ \rightarrow
}SK_1(\Zhat\pi)$ is not surjective. In particular this means
$SK_1(\Zhat\pi)\neq 0$.
\end{theorem}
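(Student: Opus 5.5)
We derive Theorem \ref{thm:t1} from Oliver's description of $SK_1$ for a central extension. Write $\ker(\delta^{\alpha})\subseteq H_2(\pi;\bZ)$ as above and compare it with the analogous description of $SK_1(\Zhat\pi)$ itself. Apply the formula
$$SK_1(\Zhat\wpi)\cong \ker(\delta^\alpha)/\langle \ker(\delta^\alpha)\cap\Lambda(\alpha)\rangle$$
quoted from \cite[Lemma 22(i)]{Oliver:1980a}. Combine it with Oliver's isomorphism
$$\theta\colon H_2(\pi;\bZ)/H_2^{\ab}(\pi)\cong SK_1(\Zhat\pi),$$
noting that $H_2^{\ab}(\pi)$ is generated by the commuting-pair classes $\Lambda(\alpha)$, since $H_2$ of an abelian group is generated by such wedges. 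By naturality of $\theta$ (Oliver's identification is natural in the group), the map $\alpha_*\colon SK_1(\Zhat\wpi)\to SK_1(\Zhat\pi)$ becomes the map
$$\ker(\delta^\alpha)/\langle\ker\delta^\alpha\cap\Lambda(\alpha)\rangle \longrightarrow H_2(\pi;\bZ)/\langle\Lambda(\alpha)\rangle$$
induced by inclusion. I expect this identification to be the technically delicate step, and would check it directly from Oliver's construction.

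Granting this, surjectivity of $\alpha_*$ is equivalent to $H_2(\pi;\bZ)=\ker(\delta^\alpha)+\langle\Lambda(\alpha)\rangle$. So it suffices to show $\delta^\alpha$ does not vanish on all of $H_2(\pi;\bZ)$ modulo $\langle\Lambda(\alpha)\rangle$. Equivalently, it suffices to show that $\delta^\alpha(H_2(\pi;\bZ))\neq 1$ while $\delta^\alpha(\Lambda(\alpha))=1$.

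First, $\delta^\alpha$ is onto $\sigma$. By the five-term (Hopf) sequence for the central extension, the image of $\delta^\alpha$ is $\sigma\cap[\wpi,\wpi]$, which is $\sigma$ by hypothesis and is nontrivial since $\sigma\neq 1$.

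Second, take $q_1\wedge q_2\in\Lambda(\alpha)$ with $q_1q_2=q_2q_1$. By construction of $u_\alpha$, $\delta^\alpha(q_1\wedge q_2)=[\tilde g_1,\tilde g_2]$ for lifts $\tilde g_i\in\wpi$. Since $q_1$ and $q_2$ commute, $\alpha([\tilde g_1,\tilde g_2])=1$, so this commutator lies in $\Omega(\alpha)\cap\sigma=1$. Hence $\delta^\alpha$ kills $\langle\Lambda(\alpha)\rangle$ but is nonzero on $H_2(\pi;\bZ)$. Therefore $\ker\delta^\alpha+\langle\Lambda(\alpha)\rangle\subseteq\ker\delta^\alpha\neq H_2(\pi;\bZ)$, and $\alpha_*$ is not surjective.

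In particular, $\delta^\alpha$ induces a surjection $H_2(\pi;\bZ)/\langle\Lambda(\alpha)\rangle\to\sigma\neq1$. So $SK_1(\Zhat\pi)\cong H_2(\pi;\bZ)/H_2^{\ab}(\pi)\neq0$.
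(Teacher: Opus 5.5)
Your proof is correct and is essentially the paper's argument. The paper also uses Lemma 22(i), gets $SK_1(\Zhat\pi)\cong H_2(\pi;\bZ)/\langle\Lambda\rangle$ by applying that lemma to the identity extension $\iota$ (this matches your use of $H_2^{\ab}(\pi)=\langle\Lambda(\alpha)\rangle$), and concludes from $\langle\Lambda(\alpha)\rangle\subseteq\ker\delta^\alpha\subsetneq H_2(\pi;\bZ)$. The identification of $\alpha_*$ that you flag as delicate is simply asserted in the paper; what you actually need is that the image of $\alpha_*$ corresponds to $(\ker\delta^\alpha+\langle\Lambda(\alpha)\rangle)/\langle\Lambda(\alpha)\rangle$, and this follows from naturality of Oliver's isomorphism together with exactness of $H_2(\wpi;\bZ)\to H_2(\pi;\bZ)\xrightarrow{\delta^\alpha}\sigma$.
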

\begin{proof}
Let $\iota \colon \pi \to \pi$ denote the identity map. Then $ 1 {
\rightarrowtail  } \pi \overset{ \iota }{ \twoheadrightarrow } \pi   $
is also a central group extension. Take any surjective group
homomorphism $f\colon F\rightarrow \wpi$ where $F$ is a free
group. Then the map $\alpha \circ f\colon F\rightarrow \pi$ is also
surjective. Hence, by definition, we have
$\Lambda(\iota)=\Lambda(\alpha)$ as subsets of $\pi\wedge\pi$. We also
have  $\alpha \circ u_{\alpha}=u_{\iota}$ and hence $\alpha \circ \delta
^{\alpha}=\delta ^{\iota}$. Let $q_1$, $q_2$ be two elements in $\pi $
such that $q_1q_2=q_2q_1$ and let $\widehat{q}_1,\widehat{q}_2$ be their
lifts in $F$. Then we have $u_{\alpha}(q_1\wedge
q_2)=u_{\alpha}([\widehat{q}_1,\widehat{q}_2])=[u_{\alpha}(\widehat{q}_1),u_{\alpha}(\widehat{q}_2)]\in
\Omega(\alpha ). $ Hence $\Lambda(\alpha )\subseteq
u_{\alpha}^{-1}(\Omega (\alpha))$. Moreover, $\sigma\subset
[\wpi,\wpi]$, Hence $\sigma$ is in the image of
$\delta^{\alpha}$. We also have  $\sigma \neq 1$ and therefore
$\ker(\delta^{\alpha})\subsetneqq \ker(\delta^{\iota}) $. This means
$$\Lambda(\iota )\cap \ker(\delta^{\iota })\subseteq
u_{\alpha}^{-1}(\Omega (\alpha))\cap u_{\widetilde{\alpha}}^{-1}(\sigma
)\subseteq
u_{\widetilde{\alpha}}^{-1}(\Omega _p\cap \sigma)=
u_{\widetilde{\alpha}}^{-1}(1)=\ker(\delta^{\alpha})\subsetneqq
\ker(\delta^{\iota}) $$
Hence the map $\alpha_*$ given by the composition
$$SK_1(\Zhat\wpi)\cong\frac{\ker (\delta
^{\alpha})}{\left\langle \ker (\delta ^{\alpha}) \cap \Lambda(\alpha)
\right\rangle}\rightarrow \frac{\ker (\delta ^{\iota})}{\left\langle
\ker (\delta ^{\alpha}) \cap \Lambda(\alpha) \right\rangle}\rightarrow
\frac{\ker (\delta ^{\iota})}{\left\langle \ker (\delta ^{\iota}) \cap
\Lambda(\iota) \right\rangle} \cong SK_1(\Zhat\pi) $$
is not surjective.
\end{proof}
\begin{remark}\label{rem:r1} 
      The ``converse'' of the last part of Theorem \ref{thm:t1} is true:  if $SK_1(\Zhat\pi)\neq 0$, then  there exists a suitable central extension of $\pi$ so that the $SK_1$ is not surjective.  To check this, 
 let  $ H_2(\pi) { \rightarrowtail}
      \bar{\pi}  \overset{ \alpha }{ \twoheadrightarrow } \pi   $ be a Schur
cover of $\pi$. Notice that $SK_1(\Zhat\bar{\pi})= 0$. Let $\sigma _0  =
H_2(\pi) \cap [\bar{\pi},\bar{\pi}]$ and
      $\sigma _1 = \langle\, c \in  H_2(\pi) \,|\, c \text{ is a commutator
in } \bar{\pi} \,\rangle$. Notice we have $\sigma _0 =  H_2(\pi)$ and
by Proposition 16 in \cite{Oliver:1980a},  $\sigma_0/\sigma _1$ is
nontrivial.  Since $\sigma_0$ is an abelian group,  there exists an
element $z$ in $\sigma _0 $ and there exists a subgroup $T$ in
$\sigma_0$ such that $z$ is not in $T$,  $\sigma _1$ is a subgroup of
$T$ and $\sigma_0/T$ is $\cy 2$. Now define $\sigma = H_2(\pi)/T=\cy 2$
and $\wpi=\bar{\pi}/T$. Then $ \sigma { \rightarrowtail  }
      \wpi  \overset{ \alpha }{ \twoheadrightarrow } \pi   $
satisfies the hypothesis of the Theorem \ref{thm:t1}.
\end{remark}
\begin{theorem}\label{thm:t2}
Let $\wpi$ be a $2$-group and $ \sigma { \rightarrowtail  }
\wpi  \overset{ \alpha }{ \twoheadrightarrow } \pi   $ a
central group extension such that $1\neq \sigma\subseteq
[\wpi,\wpi]$.  Assume that every element in $\pi $
which is conjugate to its inverse lifts to some element in
$\wpi $ with the same property and
$H^1(SK_1(\Zhat\wpi))\overset{ \alpha_* }{ \rightarrow
}H^1(SK_1(\Zhat\pi))$ is not surjective. Then the map
$H^1(SK_1(\Zhat\pi)) \to L_0^s(\Zhat\pi)$ is not trivial.
\end{theorem}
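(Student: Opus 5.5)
The plan is to use the Ranicki–Rothenberg sequence
$$\dots \to L'_1(\Zhat\pi) \to H^1(SK_1(\Zhat\pi)) \xrightarrow{\ \bd\ } L^s_0(\Zhat\pi) \to L'_0(\Zhat\pi) \to \dots$$
together with its naturality under $\alpha\colon\wpi\to\pi$. By exactness, $\bd$ is non-trivial exactly when the map $L'_1(\Zhat\pi) \to H^1(SK_1(\Zhat\pi))$ fails to be surjective. So it suffices to show that the image of $L'_1(\Zhat\pi)$ in $H^1(SK_1(\Zhat\pi))$ lies inside the image of $\alpha_*\colon H^1(SK_1(\Zhat\wpi)) \to H^1(SK_1(\Zhat\pi))$. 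Since $\alpha_*$ is assumed not surjective, this forces $\bd \neq 0$.

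\textbf{Step 1: describe $L'_1(\Zhat\pi)$.} By the proof of Lemma \ref{lem:threetwo} there is an isomorphism $H^0(\Wh'(\Zhat\pi)) \cong L'_1(\Zhat\pi)$. Hence $L'_1(\Zhat\pi)$ is generated by classes of self-conjugate units in $K_1(\Zhat\pi)$, that is, by automorphisms of the rank-one form.

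\textbf{Step 2: identify the composite.} The composite $L'_1(\Zhat\pi)\to H^1(SK_1(\Zhat\pi))$ is the Tate coboundary for the extension
$$0\to SK_1(\Zhat\pi)\to \Wh(\Zhat\pi)\to \Wh'(\Zhat\pi)\to 0 .$$
It sends a class $u$ with $\bar u = \pm u$ in $\Wh'$ to $u^{-1}\bar u \in SK_1$, read modulo norms.

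\textbf{Step 3: generators come from elements conjugate to their inverses.} The key input is Oliver's description of $H^0(\Wh'(\Zhat\pi))$, or equivalently of the involution-invariant part of $\Wh'(\Zhat\pi)$ modulo norms, through the integral logarithm $\Gamma$ and the sequence $0\to \Wh'\to \overline{I(\Zhat\pi)}\to\pi^{\ab}\to 0$. I expect this to show that $H^0(\Wh'(\Zhat\pi))$ is generated by the images of units attached to elements $g\in\pi$ conjugate to their inverses. These are $g$ itself, or units built from $1\pm g$ in the group ring of the subgroup $\langle g, h\rangle$, where $hgh^{-1}=g^{-1}$. Under the hypothesis, each such $g$ lifts to $\tilde g\in\wpi$ with $\tilde h\tilde g\tilde h^{-1}=\tilde g^{-1}$. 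The corresponding unit over $\Zhat\wpi$ is then defined and maps to the given one under $\alpha$.

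\textbf{Step 4: conclude by naturality.} By naturality of the Rothenberg coboundary, the image of each generator of $L'_1(\Zhat\pi)$ in $H^1(SK_1(\Zhat\pi))$ is $\alpha_*$ of the image of the lifted generator of $L'_1(\Zhat\wpi)$ in $H^1(SK_1(\Zhat\wpi))$. Hence
$$\Image\{L'_1(\Zhat\pi)\to H^1(SK_1(\Zhat\pi))\}\subseteq \Image \alpha_* \subsetneq H^1(SK_1(\Zhat\pi)).$$
Exactness then gives $\bd\ne 0$. The hypothesis $1\ne\sigma\subseteq[\wpi,\wpi]$ is what makes $\alpha_*$ potentially non-surjective, as in Theorem \ref{thm:t1}; here we only use non-surjectivity on $H^1$.

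\textbf{Main obstacle.} Step 3 is the hard part: proving that $H^0(\Wh'(\Zhat\pi))$ is generated by elements coming from pairs $(g,h)$ with $hgh^{-1}=g^{-1}$, and that these generators lift compatibly. I would first try to use Oliver's theorem that $\Gamma$ identifies $\Wh'(\Zhat\pi)$ with a lattice in $H_0(\pi;\Zhat\pi)$ spanned by conjugacy classes. With that identification, the $\pm$-invariant classes modulo norms are spanned by conjugacy classes $[g]$ with $[g]=[g^{-1}]$. Lifting such a class to $\wpi$ with the same property then gives a surjection
$$H^0(\Wh'(\Zhat\wpi))\to H^0(\Wh'(\Zhat\pi)).$$
I would have to handle the kernel $\pi^{\ab}$ correction term separately, possibly via diagram \eqref{diag:threeseven}.
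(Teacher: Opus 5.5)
Your reduction is sound, and it is in substance the paper's argument. The paper simply cites the proof of Proposition 3.1 of Milgram--Oliver for the inclusion $\ker\{\bd\colon H^1(SK_1(\Zhat\pi))\to L^s_0(\Zhat\pi)\}\subseteq \Image\alpha_*$. That is exactly what you prove, via exactness at $H^1(SK_1(\Zhat\pi))$ and naturality in $\pi$. Your Step 2 is not needed: since $H^0(\Wh'(\Zhat\pi))\cong L'_1(\Zhat\pi)$ naturally, it suffices to show that $\alpha_*\colon H^0(\Wh'(\Zhat\wpi))\to H^0(\Wh'(\Zhat\pi))$ is onto.

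The piece you leave open, Step 3, closes quickly along the route you name; drop the speculation about units built from $1\pm g$.
\begin{itemize}
\item The lattice $\overline{I(\Zhat\pi)}$ has $\Zhat$-basis $\{[g]-1\}$ indexed by the nontrivial conjugacy classes, and the involution permutes this basis. Hence $H^1(\overline{I(\Zhat\pi)})=0$, and $H^0(\overline{I(\Zhat\pi)})$ is the $\cy 2$-vector space on the classes with $g\sim g^{-1}$.
\item The Tate sequence of $0\to\Wh'(\Zhat\pi)\to\overline{I(\Zhat\pi)}\to\pi^{\ab}\to 0$ then gives a natural short exact sequence
$$0\to \pi^{\ab}/(\pi^{\ab})^2\to H^0(\Wh'(\Zhat\pi))\to \ker\{H^0(\overline{I(\Zhat\pi)})\to H^0(\pi^{\ab})\}\to 0 .$$
\item The left-hand term is hit from $\wpi$, because $\wpi^{\ab}\to\pi^{\ab}$ is onto.
\item For the right-hand term, take a kernel element $\sum([g_i]-1)$. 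By hypothesis it lifts to $\sum([\tilde g_i]-1)$ with each $\tilde g_i\sim\tilde g_i^{-1}$; you do not need to lift the conjugating element $h$.
\item This lift lies in the kernel for $\wpi$: $\prod[\tilde g_i]$ maps to $\prod[g_i]=1$, and $\wpi^{\ab}\cong\pi^{\ab}$.
\item That isomorphism is where $\sigma\subseteq[\wpi,\wpi]$ is used. So your remark that only non-surjectivity on $H^1$ is used is not right.
\end{itemize}
The five lemma (surjectivity version) then gives surjectivity on $H^0(\Wh')$, and your Step 4 finishes the proof.
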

\begin{proof}
In the proof of Proposition 3.1 in \cite{Milgram:1990}, it is shown that under the assumption above
any element in the kernel of the map $H^1(SK_1(\Zhat\pi)) \to
L_0^s(\Zhat\pi)$ is in the image of the map
$H^1(SK_1(\Zhat\wpi))\overset{ \alpha_* }{ \rightarrow
}H^1(SK_1(\Zhat\pi))$. Hence the map $H^1(SK_1(\Zhat\pi)) \to
L_0^s(\Zhat\pi)$ is non-trivial when $\alpha_*$ is not surjective.
\end{proof}

\subsection{A non-vanishing $\kappa^s _2$ example}
\label{subsec:k2nonzero} All through this subsection, let $\wpi$ be  the $8177^{th}$ group of order $256$ and
$\pi$ is the $1377^{th}$ group of order $128$ in the Small Groups
Library. 
The group $\widetilde \pi$ of order
$256$ given by the following presentation
\begin{equation*}
    \wpi= \left\langle
    x_1,x_2,x_3,x_4,x_5,x_6,x_7,x_8 \vv r\in R \right\rangle
\end{equation*}
where $R$ contains the relations
$x_1^2 = x_5x_6x_8$,
$x_2^2 = x_5x_6$,
$x_3^2 = x_5$,
$x_4^2 = x_8$,
$x_i^2 = 1$ for $i\in \{5,6,7,8\}$,
$[x_1,x_2] = x_5$,
$[x_1,x_3] = x_6$,
$[x_1,x_4]= x_8$,
$[x_2,x_3]= x_7$,
$[x_2,x_4]=1$, and
$[x_i,x_j]=1$ when $i,j$ are both in $\{3,4,5,6,7,8\}$ or $i$ in
$\{1,2\}$ and $j$ in $\{5,6,7,8\}$. Then the center $Z(\wpi)$ of
$\wpi$  is equal to the commutator group
$[\wpi,\wpi]$ and generated by $x_5,x_6,x_7,x_8$.
Define $$\sigma = \left\langle x_7x_8 \right\rangle\cong \cy 2 \textup{
    and }\pi=\wpi/\sigma$$
Let $\alpha $ be the natural quotient from $\wpi$ to $\pi$:
Then we have a central extension $$ \sigma { \rightarrowtail  }
\wpi  \overset{ \alpha }{ \twoheadrightarrow } \pi   $$ such
that $\sigma\subseteq [\wpi,\wpi]$.  Let $\Omega
(\alpha)$ be the set
$$\Omega (\alpha)  = \left\{  \,[\widetilde{g}_1,\widetilde{g}_2]\,\,
\left| \,\, \widetilde{g}_1,\widetilde{g}_2\in \wpi \textup{
    and } \alpha([\widetilde{g}_1,\widetilde{g}_2])=1\, \right. \right\}$$
Then the following lemma shows that $\Omega(\alpha )$ does not contain
$x_7x_8$.
\begin{lemma}
    $\sigma \cap \Omega(\alpha )=1$
\end{lemma}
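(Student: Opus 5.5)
The plan is to show the stronger statement that $x_7x_8$ is not a commutator in $\wpi$ at all. This is enough: $\alpha(x_7x_8)=1$ holds automatically, so $\sigma\cap\Omega(\alpha)=1$ says exactly that $x_7x_8\notin\Omega(\alpha)$, i.e.\ that $x_7x_8$ is not a commutator. The argument uses that $\wpi$ has nilpotency class $2$, so the commutator map becomes an alternating bilinear form over $\bF_2$. The Plücker relation for decomposable $2$-vectors then rules out the required pattern of coefficients.

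First I record the structural facts from the presentation. We have $[\wpi,\wpi]=Z(\wpi)=\langle x_5,x_6,x_7,x_8\rangle$. This group is elementary abelian of order $16$, since the $x_i$ for $i\ge 5$ are central, commute with each other, and square to $1$. Because the commutator subgroup is central, the identities $[gh,k]=[g,k][h,k]$ and $[g,hk]=[g,h][g,k]$ hold. Since squares of commutators are trivial, $[g^2,k]=[g,k]^2=1$. Each $x_i^2$ lies in the center. Hence the commutator descends to an alternating bilinear map
$$\wpi/Z(\wpi)\times \wpi/Z(\wpi)\to Z(\wpi).$$
Here $\wpi/Z(\wpi)\cong(\cy 2)^4$ has basis the images of $x_1,x_2,x_3,x_4$; this is consistent with the order count $256=16\cdot 16$.

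Next, I write arbitrary elements as $g_1=x_1^{a_1}x_2^{a_2}x_3^{a_3}x_4^{a_4}z$ and $g_2=x_1^{b_1}x_2^{b_2}x_3^{b_3}x_4^{b_4}z'$, with $z,z'$ central and $a_i,b_i\in\bF_2$. Bilinearity gives
$$[g_1,g_2]=\prod_{i<j}[x_i,x_j]^{c_{ij}}, \qquad c_{ij}=a_ib_j+a_jb_i \in \bF_2 .$$
Substituting the defining relations $[x_1,x_2]=x_5$, $[x_1,x_3]=x_6$, $[x_1,x_4]=x_8$, $[x_2,x_3]=x_7$ and $[x_2,x_4]=[x_3,x_4]=1$ yields
$$[g_1,g_2]=x_5^{c_{12}}x_6^{c_{13}}x_7^{c_{23}}x_8^{c_{14}}.$$
Since $x_5,x_6,x_7,x_8$ form a basis of $Z(\wpi)$, the equation $[g_1,g_2]=x_7x_8$ forces $c_{23}=c_{14}=1$ and $c_{12}=c_{13}=0$.

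The key step is the Plücker identity
$$c_{12}c_{34}+c_{13}c_{24}+c_{14}c_{23}=0,$$
valid for the coordinates of any decomposable vector $a\wedge b\in\Lambda^2\bF_2^4$. I will verify it by direct expansion in characteristic $2$; equivalently, it is the statement $(a\wedge b)\wedge(a\wedge b)=0$. Substituting $c_{12}=c_{13}=0$ leaves $c_{14}c_{23}=0$, which contradicts $c_{14}=c_{23}=1$. So $x_7x_8$ is not a commutator, and $\sigma\cap\Omega(\alpha)=1$.

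The only step requiring care is confirming that the presentation really gives $Z(\wpi)$ elementary abelian of rank $4$ with basis $x_5,\dots,x_8$. I will take this from the stated structure of $\wpi$ (the paper asserts $Z(\wpi)=[\wpi,\wpi]$), or check it in GAP. In particular, the relations $x_1^2=x_5x_6x_8$ and the like do not collapse the center.
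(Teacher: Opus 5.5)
Your proof is correct. The first half matches the paper's argument. Both expand an arbitrary commutator using the class-two structure and reach $[g_1,g_2]=x_5^{c_{12}}x_6^{c_{13}}x_7^{c_{23}}x_8^{c_{14}}$. The paper takes exponents in $\{0,1,2,3\}$ and reduces mod $2$, while you first pass to $\wpi/Z(\wpi)$; for that step you only need that every element has the form $x_1^{a_1}x_2^{a_2}x_3^{a_3}x_4^{a_4}z$, not the full basis claim.

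The real difference is how you show that $c_{12}=c_{13}=0$, $c_{14}=c_{23}=1$ has no solution. The paper does a parity case analysis on $n_1$ and $m_1$. You instead use the Plücker identity $c_{12}c_{34}+c_{13}c_{24}+c_{14}c_{23}=0$, which does hold identically over $\bF_2$ for $c_{ij}=a_ib_j+a_jb_i$ (every monomial appears twice).

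Your route explains why the lemma holds. Commutators are images of decomposable vectors $a\wedge b\in\Lambda^2\bF_2^4$. Every preimage of $x_7x_8$, namely $e_1\wedge e_4+e_2\wedge e_3+s\,e_2\wedge e_4+t\,e_3\wedge e_4$, has Plücker invariant $1$. Your argument correctly handles the unconstrained coordinates $c_{24},c_{34}$, because they are multiplied by the vanishing $c_{13},c_{12}$. The method also transfers directly to checking $\sigma\cap\Omega(\alpha)=1$ for other class-two extensions, such as those in the paper's lists. The paper's case analysis is fully elementary but tied to this particular presentation.

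One wording point: showing that $x_7x_8$ is not a commutator is equivalent to the lemma, not stronger, as your own next sentence observes. Both proofs rely equally on $x_5,\dots,x_8$ being independent in $Z(\wpi)$, which the paper takes as part of its description of $\wpi$.
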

\begin{proof}
    Let $n_1,m_1,n_2,m_2,n_3,m_3,n_4,m_4\in \{0,1,2,3\}$ Then we have
    $$\left[\, x_1^{n_1}x_2^{n_2}x_3^{n_3}x_4^{n_4}\, ,\,
    x_1^{m_1}x_2^{m_2}x_3^{m_3}x_4^{m_4}\,
    \right]=x_5^{q_5}x_6^{q_6}x_7^{q_7}x_8^{q_8}$$
    where
    $$ \begin{aligned}
        q_5  & \equiv  n_1 m_2 + n_2 m_1 & \pmod 2, \\
        q_6  & \equiv  n_1 m_3 + n_3 m_1 & \pmod 2, \\
        q_7  & \equiv  n_2 m_3 + n_3 m_2 & \pmod 2, \\        
        q_8  & \equiv  n_1 m_4 + n_4 m_1 & \pmod 2.
    \end{aligned} $$
    We just need to show that there exists no  $n_1,m_1,n_2,m_2,n_3,m_3,n_4,m_4\in \{0,1,2,3\}$ such that $q_5\equiv 0 \Mod 4$, $q_6\equiv 0 \pmod 2$, $q_7\equiv 1 \pmod 2$, and
    $q_8\equiv  1 \pmod 2$. Suppose otherwise. Assume $n_1$ is even. Then $m_1$ is odd since $q_8$ is odd. So $n_2$ is even since $q_5$ is even. Therefore $n_3$ is odd, since $q_7$ is odd. This is a contradiction because $q_6$ is even. Hence $n_1$ is odd. Similarly $m_1$ is odd. Then $m_2 \equiv n_2 \pmod 2$ because $q_5$ is even. We also have $m_3 \equiv n_3 \pmod 2$ because $q_6$ is even. Then $q_7 \equiv 2n_2m_3 \pmod 2$. This is a contradiction. 
\end{proof}
Therefore by Theorem \ref{thm:t1}, we know that the map
$SK_1(\Zhat\wpi)\overset{ \alpha_* }{ \rightarrow
}SK_1(\Zhat\pi)$ is not surjective.
\begin{lemma}\label{lem:fourfour}
    $H^1(SK_1(\Zhat\wpi))\overset{ \alpha_* }{ \rightarrow
    }H^1(SK_1(\Zhat\pi))$ is not surjective.
\end{lemma}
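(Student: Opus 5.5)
We need to pass from non-surjectivity on $SK_1(\Zhat\wpi)\to SK_1(\Zhat\pi)$ (already established via Theorem \ref{thm:t1}) to non-surjectivity after applying $H^1 = \widehat H^1(\cy 2;-)$. By Oliver's result recalled above, the standard involution acts by $-1$ on $SK_1(\Zhat\pi)$, so $H^1(SK_1(\Zhat\pi))\cong SK_1(\Zhat\pi)\otimes\cy 2$, and likewise for $\wpi$. The functor $-\otimes \cy 2$ is right exact, so if $\alpha_*\colon SK_1(\Zhat\wpi)\to SK_1(\Zhat\pi)$ has nonzero cokernel $C$, then $\alpha_*\otimes\cy 2$ has cokernel $C\otimes\cy 2$. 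Since $C$ is a nonzero finite abelian $2$-group (both $SK_1$ groups are $2$-groups, quotients of $H_2(-;\bZ)$), we get $C\otimes \cy 2\neq 0$ by Nakayama's lemma. Hence the induced map on $H^1$ is not surjective.

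The steps I would carry out are as follows.

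\begin{enumerate}
\item Invoke \cite[8.6]{Oliver:1988a} for both $\wpi$ and $\pi$. This gives that the involution is $-1$ on $SK_1$, so $H^1(SK_1)=\ker(1+t)/\mathrm{im}(1-t)=A/2A$ for $A=SK_1$.
\item Check naturality: the isomorphism $H^1(A)\cong A\otimes \cy 2$ is compatible with $\alpha_*$, because $\alpha_*$ commutes with the involution.
\item Apply right exactness of $\otimes\cy 2$ together with the fact that $C$ is a nonzero finite $2$-group.
\end{enumerate}

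The only subtle point is that the cokernel is a $2$-group and not merely nonzero. This holds because $SK_1(\Zhat\pi)\cong H_2(\pi;\bZ)/H_2^{\ab}(\pi)$, and $\pi$ is a $2$-group. For completeness, I would note that the explicit computation for these groups of orders $256$ and $128$ could be confirmed in GAP. The argument above, however, already gives the statement in general: non-surjectivity on $SK_1$ for $2$-groups always implies non-surjectivity on $H^1(SK_1)$. This is what is needed for Theorem \ref{thm:t2}.
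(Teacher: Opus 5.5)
Your argument is correct, but it takes a different route from the paper's.

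\textbf{The paper's route.} The paper proves the lemma by a computation specific to this group. Using the central extension $Z(\pi)\rightarrowtail\pi\twoheadrightarrow\pi/Z(\pi)$, it observes that $SK_1(\Zhat\pi)$ is a quotient of $\ker(\delta^p)\subseteq H_2(\pi/Z(\pi);\bZ)\cong(\cy 2)^4$, so $SK_1(\Zhat\pi)$ has exponent $2$. Hence $H^1(SK_1(\Zhat\pi))=SK_1(\Zhat\pi)$. The image of the map on $H^1$ therefore lies in $\alpha_*(SK_1(\Zhat\wpi))$, which is a proper subgroup by Theorem \ref{thm:t1}.

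\textbf{Your route.} You replace the exponent-$2$ computation with two general facts: right exactness of $-\otimes\cy 2$, and $C/2C\neq 0$ for any nonzero finite abelian $2$-group $C$. This buys generality. For any central extension of finite $2$-groups, non-surjectivity on $SK_1(\Zhat -)$, which is the conclusion of Theorem \ref{thm:t1}, automatically gives non-surjectivity on $H^1(SK_1(\Zhat -))$. So the $H^1$ hypothesis in Theorem \ref{thm:t2} is redundant once the hypotheses of Theorem \ref{thm:t1} hold. The appendix search producing the list L2 effectively relies on this, since it only tests the Theorem \ref{thm:t1} condition and the lifting condition.

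\textbf{What the paper's route gives in exchange.} It yields the extra identification $H^1(SK_1(\Zhat\pi))\cong SK_1(\Zhat\pi)$ for this particular $\pi$.

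\textbf{A small economy in your argument.} You only need Oliver's sign result for $SK_1(\Zhat\pi)$, not for $SK_1(\Zhat\wpi)$. Every class in $H^1(SK_1(\Zhat\wpi))$ is represented by an element of $SK_1(\Zhat\wpi)$. So the image in $H^1(SK_1(\Zhat\pi))\cong SK_1(\Zhat\pi)\otimes\cy 2$ always lies in the image of $SK_1(\Zhat\wpi)$, whatever the involution on $SK_1(\Zhat\wpi)$ is.
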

\begin{proof}
    Considering the central extension $ Z(\pi) { \rightarrowtail  } \pi
    \overset{ p }{ \twoheadrightarrow }  \pi/Z(\pi)  $. We know that
    $SK_1(\Zhat\pi)$ is a quotient of $\ker (\delta^{p})$. Since $\ker
    (\delta^{p})\subseteq H_2(\pi/Z(\pi);\bZ)\cong (\cy 2)^4$, we know that
    $SK_1(\Zhat\pi)$ has exponent $2$. Hence
    $H^1(SK_1(\Zhat\wpi))\overset{ \alpha_* }{ \rightarrow
    }H^1(SK_1(\Zhat\pi))$ is not surjective.
\end{proof}
Hence by the following lemma we know that the central extension $\sigma
{ \rightarrowtail  } \wpi  \overset{ \alpha }{
    \twoheadrightarrow } \pi $ satisfies all the hypothesis in Theorem
\ref{thm:t2}.
\begin{lemma}
    All elements in $\pi $ that are conjugate to their inverses have lifts
    in $\wpi$  that have the same property.
\end{lemma}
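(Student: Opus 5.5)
The plan is to reduce the lemma to a finite linear-algebra check over $\bF_2$ inside the centre $Z(\wpi)=\langle x_5,x_6,x_7,x_8\rangle\cong(\cy 2)^4$.

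\emph{Reduction.} Since $\wpi/Z(\wpi)\cong(\cy 2)^4$ and $[\wpi,\wpi]\subseteq Z(\wpi)$, the group $\wpi$ has nilpotency class $2$. So for fixed $\widetilde g$ the map $\widetilde h\mapsto[\widetilde h,\widetilde g]$ is a homomorphism $\wpi\to Z(\wpi)$. Its image $I(\widetilde g)$ is a subgroup, and it depends only on the exponent parities $(n_1,\dots,n_4)$ of $\widetilde g=x_1^{n_1}x_2^{n_2}x_3^{n_3}x_4^{n_4}\cdot(\text{central})$. Reading off the commutator formula from the previous lemma, $I(\widetilde g)$ is the span of
$$[x_1,\widetilde g],\ [x_2,\widetilde g],\ [x_3,\widetilde g],\ [x_4,\widetilde g],$$
which are explicit vectors in the basis $x_5,\dots,x_8$. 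Now $\widetilde h\widetilde g\widetilde h^{-1}=[\widetilde h,\widetilde g]\,\widetilde g$, so $\widetilde g$ is conjugate to its inverse in $\wpi$ if and only if $\widetilde g^{-2}\in I(\widetilde g)$. Likewise $g=\alpha(\widetilde g)$ is conjugate to its inverse in $\pi$ if and only if $\widetilde g^{-2}\in I(\widetilde g)\cdot\sigma$. Multiplying $\widetilde g$ by an element $c\in\sigma$ changes neither $I(\widetilde g)$ nor $\widetilde g^{-2}$, because $c^2=1$. So every lift of $g$ behaves the same way. Since $Z(\wpi)$ is elementary abelian, $\widetilde g^{-2}=\widetilde g^{2}$. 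The lemma is therefore equivalent to the following statement:
$$\text{for every parity vector } (n_1,\dots,n_4),\qquad \widetilde g^{2}\,x_7x_8\in I(\widetilde g)\ \Longrightarrow\ \widetilde g^{2}\in I(\widetilde g).$$
Note that this implication holds automatically whenever $x_7x_8\in I(\widetilde g)$, and also whenever $\widetilde g^2\in I(\widetilde g)$.

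\emph{Computing the squares.} In class $2$ we have $(ab)^2=a^2b^2[b,a]$. From the relations $x_1^2=x_5x_6x_8$, $x_2^2=x_5x_6$, $x_3^2=x_5$, $x_4^2=x_8$, together with the listed commutators, I will write $\widetilde g^2$ as an explicit vector $v(n)\in\bF_2^4$. This vector is a quadratic function of the parities $n_i$. Likewise $I(\widetilde g)$ is the column span of the $4\times4$ matrix whose columns are the bilinear forms $q_5,\dots,q_8$ evaluated against $e_1,\dots,e_4$.

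\emph{Case check.} There are $16$ parity classes. For $n=0$ the element $\widetilde g$ is central, and both $I$ and $v$ vanish; the element $\widetilde g$ is then an involution, so it is its own inverse and the claim is trivial. For the remaining $15$ classes I will list $I(\widetilde g)$ and check in each case that either $x_7x_8\in I(\widetilde g)$ or $v(n)\in I(\widetilde g)$. These are the two ways the implication above is satisfied. Many cases should be disposed of quickly:
\begin{itemize}
\item if $n_1=1$, then $[x_4,\widetilde g]$ contributes $x_8$ and $[x_2,\widetilde g]$ contributes $x_5+x_7n_3$, so $I(\widetilde g)$ is large;
\item if $n_1=0$, then $x_8\notin I(\widetilde g)$ unless $n_4=1$, and the check there reduces to comparing $v(n)$ with a small subgroup.
\end{itemize}
The main obstacle is simply carrying out these $15$ cases without error, in particular getting the commutator correction terms in $v(n)$ right. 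If some case fails, I would instead search for a better conjugator. This amounts to the same membership test, so a failure would genuinely contradict the lemma; I would then recheck the relations, or verify the claim by GAP, as the paper does elsewhere.
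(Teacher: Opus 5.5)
Your reduction is correct: all lifts of $g=\alpha(\widetilde g)$ have the same square and the same commutator subgroup $I(\widetilde g)$. So the lemma is equivalent to the implication $\widetilde g^{2}x_7x_8\in I(\widetilde g)\Rightarrow \widetilde g^{2}\in I(\widetilde g)$ for each parity class.

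\textbf{The gap is in the case check.} You assert that the implication can only hold in two ways, $x_7x_8\in I(\widetilde g)$ or $\widetilde g^2\in I(\widetilde g)$. You have omitted the vacuous way, $\widetilde g^{2}x_7x_8\notin I(\widetilde g)$, and that is the one that matters.

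\emph{The first disjunct never holds.} The preceding lemma, $\sigma\cap\Omega(\alpha)=1$, says exactly that $x_7x_8$ is not a commutator in $\wpi$. Hence $x_7x_8\notin I(\widetilde g)$ for every $\widetilde g$.

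\emph{So your check would require every element of $\wpi$ to be conjugate to its inverse, which is false.} For $\widetilde g=x_3$ one has $I(x_3)=\langle x_6,x_7\rangle$ and $x_3^2=x_5\notin I(x_3)$. The same failure occurs in all six parity classes with $n_1=0$ and $(n_2,n_3)\neq(0,0)$.

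\emph{Your fallback is also wrong.} You say such a failure ``would genuinely contradict the lemma.'' But in these six classes $\widetilde g^2x_7x_8\notin I(\widetilde g)$ as well. So $\alpha(\widetilde g)$ is simply not conjugate to its inverse in $\pi$, and the lemma holds vacuously there. Carried out as written, your plan would wrongly conclude that the lemma is false.

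\textbf{The missing step is to show that the antecedent always fails.} The paper does this uniformly, without splitting on whether $\widetilde g\sim\widetilde g^{-1}$. It shows that $ghg^{-1}=x_7x_8h^{-1}$, i.e.\ $[g,h]h^2=x_7x_8$, has no solutions $g,h\in\wpi$, by a parity argument on the exponents $q_5,\dots,q_8$. Hence any conjugation in $\pi$ sending $\alpha(h)$ to $\alpha(h)^{-1}$ sends $h$ to $h^{-1}$ on the nose, for every lift $h$.

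\textbf{How to repair your argument.} Replace your criterion by the correct disjunction: ``$\widetilde g^2\in I(\widetilde g)$ or $\widetilde g^2x_7x_8\notin I(\widetilde g)$.'' Your $15$-case computation then goes through. In fact the second condition holds in every case, which is precisely the paper's statement. The first condition holds only when $n_1=1$, or when $\widetilde g\equiv x_4$ modulo the centre.
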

\begin{proof}
    Suppose $ghg^{-1}=x_7x_8h^{-1}$ for some $g$, $h$ in $\wpi$. Then
    $[g,h]h^2=x_7x_8$. To show that this is impossible, it is enough to consider the case where $g= x_1^{n_1}x_2^{n_2}x_3^{n_3}x_4^{n_4}$ and $h=x_1^{m_1}x_2^{m_2}x_3^{m_3}x_4^{m_4}$    for some $n_1,m_1,n_2,m_2,n_3,m_3,n_4,m_4\in \{0,1,2,3\}$. In this case, we have
    $$[g,h]h^2=\left[\, x_1^{n_1}x_2^{n_2}x_3^{n_3}x_4^{n_4}\, ,\,
    x_1^{m_1}x_2^{m_2}x_3^{m_3}x_4^{m_4}\,
    \right](x_1^{m_1}x_2^{m_2}x_3^{m_3}x_4^{m_4})^2=x_5^{q_5}x_6^{q_6}x_7^{q_7}x_8^{q_8}$$
    where
    $$ \begin{aligned}
        q_5  & \equiv  n_1 m_2 + n_2 m_1 + m_1m_2+m_1+m_2+m_3 & \pmod 2, \\
        q_6  & \equiv  n_1 m_3 + n_3 m_1 + m_1m_3+m_1+m_2 & \pmod 2, \\
        q_7  & \equiv  n_2 m_3 + n_3 m_2 + m_2m_3 & \pmod 2, \\        
        q_8  & \equiv  n_1 m_4 + n_4 m_1 + m_1m_4+m_1+m_4  & \pmod 2.
    \end{aligned} $$
    Hence it is enough to show that there exists no  $n_1,m_1,n_2,m_2,n_3,m_3,n_4,m_4\in \{0,1,2,3\}$ such that $q_5\equiv 0 \Mod 4$, $q_6\equiv 0 \pmod 2$, $q_7\equiv 1 \pmod 2$, and
    $q_8\equiv  1 \pmod 2$. Suppose otherwise. Assume $m_1$ is even. Then $m_4$ is odd and $n_1$ is even odd since $q_8$ is odd. So $m_2$ is even since $q_6$ is even.Therefore $m_3$ is even since $q_5$ is even. This is a contradiction to $q_7$ being odd. Hence $m_1$ is odd. Assume $n_1$ is even. Then $n_2$, $m_3$  must have distinct parity since $q_5$ is even. So $n_2m_3$ is even. This means $m_2$ and $n_3+m_3$ are odd. This is a contradiction since $q_6$ is even. Hence $n_1$ is odd. Then $n_3$ and $m_2$ have distinct parity since $q_6$ is even. So $n_3m_2$ is even. Therefore $n_2+m_2$ and $m_3$ are odd since $q_7$ is odd. This contradicts $q_5$ being even. Hence we are done. 
\end{proof}
Hence by Theorem \ref{thm:t2}
the map $H^1(SK_1(\Zhat\pi)) \to L_0^s(\Zhat\pi)$ is not trivial. Now to
finish the example we prove the following lemma.
\begin{lemma}\label{lem:foursix}
    $H^1(\Wh'(\Zhat \pi))=0$ 
\end{lemma}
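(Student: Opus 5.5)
We must show $H^1(\Wh'(\Zhat \pi))=0$ for $\pi = SG[128,1377]$. We use the short exact sequence
$$0 \to \Wh'(\Zhat\pi) \to \overline{I(\Zhat\pi)} \to \pi^{\ab} \to 0$$
and Oliver's description of the Tate cohomology of $\Wh'(\Zhat\pi)$ from \cite{Oliver:1980a} and \cite[p.~163]{Oliver:1988a}. The coboundary $\delta\colon H^0(\pi^{\ab}) \to H^1(\Wh'(\Zhat\pi))$ is surjective, as noted before Theorem E. Hence it suffices to show that $\delta = 0$. Equivalently, every element of order $\leq 2$ in $\pi^{\ab}$ must lift to an involution-invariant class in $\overline{I(\Zhat\pi)}$ (modulo norms).

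Oliver's result (\cite[Theorem 2/Prop.~13]{Oliver:1980a}) identifies $H^1(\Wh'(\Zhat\pi))$ group-theoretically. It is the quotient of $H^0(\pi^{\ab})=\{[g]\in\pi^{\ab} : [g^2]=1\}$ by the subgroup generated by the classes $[g]$ of elements $g\in\pi$ that are conjugate to their inverses (together with the image of central/abelian contributions). I will take this description as the working criterion. So the plan is to show that the subgroup of $\pi^{\ab}$ generated by images of elements $g$ with $g \sim g^{-1}$ contains all of $\Omega_1(\pi^{\ab})$.

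Concretely, $\pi = \wpi/\langle x_7x_8\rangle$. Its abelianization is $\wpi/[\wpi,\wpi]$, since $\sigma \subseteq [\wpi,\wpi]$. This is generated by the images of $x_1,x_2,x_3,x_4$. Using the relations $x_1^2=x_5x_6x_8$, $x_2^2 = x_5x_6$, $x_3^2=x_5$, $x_4^2=x_8$, all of which are commutators, we get $\pi^{\ab}\cong(\cy 2)^4$. Thus $H^0(\pi^{\ab})=\pi^{\ab}$, and we need four elements conjugate to their inverses in $\pi$ whose images span $(\cy 2)^4$.

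For $g=x_1^{a_1}x_2^{a_2}x_3^{a_3}x_4^{a_4}$ we need some $h$ with $[h,g]g^2 \in \sigma$ in $\wpi$. The commutator formulas from the previous lemmas give this as parity equations, now allowing the value $x_7x_8$ as well as $1$. We check the candidates $x_1,x_2,x_3,x_4$ and sums such as $x_1x_2$ as needed. For example:
\begin{itemize}
\item $x_3$: we have $x_3^2=x_5=[x_1,x_2]$, but we need a commutator with $x_3$ equal to $x_5$. Since $[x_1,x_3]=x_6$ and $[x_2,x_3]=x_7$, this fails, so $x_3$ may need replacing by a product.
\item $x_4$: $[x_1,x_4]=x_8=x_4^2$ works directly.
\end{itemize}
We solve the resulting small linear systems over $\cy 2$, exactly as in the preceding lemma but looking for solutions rather than proving none exist. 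The target is the value $1$ or $x_7x_8$ for $(q_5,q_6,q_7,q_8)$, i.e. $(0,0,0,0)$ or $(0,0,1,1)$.

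The main obstacle is the correct identification of $H^1(\Wh'(\Zhat\pi))$ in terms of elements conjugate to their inverses, i.e. the exact form of Oliver's criterion. If it is stated via real versus rational representations instead, I would switch to that formulation. In that version I would show that every irreducible $\bQ_2$-representation of $\pi$ is of real type, using the same element-by-element check. Once the criterion is fixed, the verification is a finite parity computation, which can be confirmed in GAP.
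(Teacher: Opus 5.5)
Your route is the paper's. Oliver's formula $H^1(\Wh'(\Zhat\pi))\cong H^0(\pi^{\ab})/\langle [g] : g\sim g^{-1}\rangle$, together with $\pi^{\ab}\cong(\cy 2)^4$, reduces the lemma to finding elements conjugate to their inverses whose classes span $\pi^{\ab}$. Your check for $x_4$ is correct: $x_1x_4x_1^{-1}=x_8x_4=x_4^{-1}$. Your remark about $x_3$ is also correct, and in fact slightly stronger than you say. No element of the coset $x_3[\pi,\pi]$ is inverted, because $[h,x_3]\in\langle x_6,x_7\rangle$ never lies in $x_5\sigma$. So $[x_3]$ can only be reached through products.

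However, the proposal stops there. Once the criterion is fixed, exhibiting the spanning set is the whole content of the lemma, so as written this is a plan rather than a proof. The paper supplies the missing elements. Since commutators in $\wpi$ are central, they are bilinear, and one computes
\[
[x_2x_3x_4,x_1]=x_5x_6x_8=x_1^2,\qquad
[x_2x_4,x_1x_2]=x_5x_8=(x_1x_2)^2,\qquad
[x_3x_4,x_1x_2x_3]=x_6x_7x_8=(x_1x_2x_3)^2.
\]
Hence $x_2x_3x_4$, $x_2x_4$ and $x_3x_4$ invert $x_1$, $x_1x_2$ and $x_1x_2x_3$ respectively. These relations already hold in $\wpi$, so the extra target $x_7x_8$ is never needed. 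Adding $x_1x_4x_1^{-1}=x_4^{-1}$, the classes $[x_1],[x_1x_2],[x_1x_2x_3],[x_4]$ span $\pi^{\ab}$, so $H^1(\Wh'(\Zhat\pi))=0$.

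Your hedges can be dropped. The formula you stated is exactly the one the paper uses, with no further ``central/abelian'' generators, and it needs no representation-theoretic fallback.
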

\begin{proof}
    Note that
    \eqncount
    \begin{equation}\label{eq:fourseven}
    H^1(\Wh'(\Zhat \pi))\cong \frac{\langle g\in \pi_{\textup{\ab} }  \vv
        g^2=1\rangle }{\langle  g \vv g\sim g^{-1}\rangle }\cong \frac{\langle
        [x_1],[x_2],[x_3],[x_4]\rangle }{\langle [x_1],[x_1x_2],[x_3],[x_4]\rangle }\cong 0
        \end{equation}
    because  we have the relations
    \begin{enumerate}
    \item  $(x_2x_3x_4)x_1(x_2x_3x_4)^{-1}=x_1^{-1}$
    \item  $(x_2x_4)(x_1x_2)(x_2x_4)^{-1}=(x_1x_2)^{-1}$ 
    \item $(x_3x_4)(x_1x_2x_3)(x_3x_4)^{-1}=(x_1x_2x_3)^{-1}$, and
    \item $x_1x_4x_1^{-1}=x_4^{-1}$ \hfill \qedhere
    \end{enumerate}
 
\end{proof}

\begin{corollary}\label{cor:Ethree}  The group $\pi  = SG[128,1377]$ has the property that
$\kappa_2^s(\pi) \neq 0$ and $\bar\kappa'_2(\pi) = 0$. 
Moreover, $\pi^{\ab}$ is elementary abelian and $\kappa^h_2(\pi)  = 0$.
\end{corollary}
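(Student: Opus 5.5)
The plan is to assemble the lemmas of this subsection with Theorem C and then run the argument from the proof of Corollary D.

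\emph{Step 1: verify the hypotheses of Theorem C for $\pi = SG[128,1377]$.}
Theorem \ref{thm:t1}, together with the lemma $\sigma\cap\Omega(\alpha)=1$, shows that $SK_1(\Zhat\wpi)\to SK_1(\Zhat\pi)$ is not surjective. Lemma \ref{lem:fourfour} strengthens this to non-surjectivity on $H^1(SK_1(-))$. To see why, recall that the involution acts by $-1$, so $H^1(SK_1)\cong SK_1\otimes\cy 2$, and $SK_1(\Zhat\pi)$ has exponent $2$.

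The lemma on lifting elements conjugate to their inverses supplies the remaining hypothesis of Theorem \ref{thm:t2}. That theorem then gives that $\bd\colon H^1(SK_1(\Zhat\pi))\to L^s_0(\Zhat\pi)$ is non-zero, which is condition (i) of Theorem C. Condition (ii) is exactly Lemma \ref{lem:foursix}.

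\emph{Step 2: conclude from Theorem C.}
Theorem C now gives $\bar\kappa'_2(\pi)=0$. It also gives that $\bar\kappa^s_2(\pi)\ne0$, hence $\kappa^s_2(\pi)\neq 0$. Concretely, $\theta\colon H_2(\pi;\bZ)\to H^1(SK_1(\Zhat\pi))$ is surjective, so the commutative square of Theorem \ref{thm:kappatwo} shows that the image of $\rho_2\circ\kappa^s_2$ contains the non-zero image of $\bd$.

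\emph{Step 3: show that $\pi^{\ab}$ is elementary abelian.}
This is a direct check from the presentation. In $\wpi$ the commutator subgroup $[\wpi,\wpi]$ is generated by $x_5,\dots,x_8$. The squares $x_1^2,\dots,x_4^2$ all lie in $\langle x_5,\dots,x_8\rangle$, and $x_5^2=\dots=x_8^2=1$. Hence $\wpi^{\ab}\cong(\cy 2)^4$ on $[x_1],\dots,[x_4]$. Since $\sigma\subseteq[\wpi,\wpi]$, we get $\pi^{\ab}=\wpi^{\ab}$. This agrees with the description of $\pi^{\ab}$ used in \eqref{eq:fourseven}.

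\emph{Step 4: show that $\kappa^h_2(\pi)=0$.}
The argument follows the proof of Corollary D. The group $L'_0(\Zpi)\to L^h_0(\Zpi)$ is injective, so $\ker\kappa'_2=\ker\kappa^h_2$, and it suffices to show $\kappa'_2(\pi)=0$. The image of $\kappa'_2$ lies in the torsion subgroup, since $H_2(\pi;\cy 2)$ is $2$-torsion. By Lemma \ref{lem:threethree}, this torsion is detected in $L'_0(\Zhat\pi)\oplus L'_0(\Zpi^{\ab})$.

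The $2$-adic component is $\bar\kappa'_2(\pi)=0$; alternatively, $L'_0(\Zhat\pi)\cong H^1(\Wh'(\Zhat\pi))=0$ by Lemma \ref{lem:threetwo}. The other component is, by naturality, in the image of $\kappa'_2(\pi^{\ab})$. This vanishes by Theorem A: $\pi^{\ab}$ is elementary abelian, $SK_1=0$, and $\kappa^s_2=\kappa'_2=0$ for elementary abelian groups. Hence $\kappa'_2(\pi)=0$ and therefore $\kappa^h_2(\pi)=0$.

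The main obstacle is the group-theoretic input to Theorem \ref{thm:t2}, namely the two parity case analyses. These are already carried out in the preceding lemmas, so what remains is essentially bookkeeping.
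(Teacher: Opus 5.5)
Your proposal is correct and follows the paper's proof. You verify the hypotheses of Theorem C via Lemma \ref{lem:fourfour}, Lemma \ref{lem:foursix} and Theorem \ref{thm:t2}, and you get $\kappa^h_2(\pi)=0$ from three inputs: $\bar\kappa'_2(\pi)=0$, $\kappa'_2(\pi^{\ab})=0$ by Theorem A, and the detection statement in Theorem B. That detection statement is exactly Lemma \ref{lem:threethree}, which you cite directly; your other additions (the exponent-two argument on $H^1(SK_1)$, the explicit abelianization check, and injectivity of $L'_0\to L^h_0$) only spell out steps the paper leaves implicit.
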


\begin{proof} The first property follows from Lemma \ref{lem:fourfour},
Lemma \ref{lem:foursix} and Theorem \ref{thm:t2}, which establish the
conditions of Theorem C. 
By inspection, we see that $\pi^{\ab}$ is elementary abelian, and hence $\kappa'_2(\pi^{\ab}) = 0$. Since $H^1(\Wh'(\Zhat \pi))=0$, we have $\bar\kappa'_2(\pi) = 0$ and Theorem B implies that $\kappa'_2(\pi) = 0$. 
Therefore $\kappa^h_2(\pi) = 0$ and $\ker \kappa_2^s \subsetneq \ker\kappa_2^h$.
\end{proof}

\begin{remark}\label{rem:fournine}
Let  $\pi$ be a $2$-group of order less than or equal to $64$  and $\sigma { \rightarrowtail  }
\wpi  \overset{ \alpha }{ \twoheadrightarrow } \pi   $  a central group extension which
satisfies the hypothesis of Theorem \ref{thm:t1}. Then $SK_1(\Zhat\pi)$ is non-zero, and  by Remark \ref{rem:r1}, without loss of generality we can assume that $\sigma = \cy 2$ in this central extention.  Then by a GAP computation (see Appendix \ref{sec:apptwo}), we show that  $\pi$ is of order $64$ with group number $149$, $150$,  $151$, $170$, $171$, $172$,  $177$, or $182$. Moreover, $H^1(\Wh'(\Zhat\pi))=0$ for the groups in this list. 

One can ask if these groups can fit into a central extension that satisfies both the hypotheses of Theorem \ref{thm:t1} and Theorem \ref{thm:t2}.  
\end{remark}

\begin{remark}\label{rem:fourtwelve}  Let $\pi $ be the $i$th group of order $64$ where $i$ is one of numbers in the previous list \eqref{rem:fournine}. Then by a GAP computation (see Appendix \ref{sec:apptwo}, Listing 6) we show that $SK_1(\Zhat\pi)\cong \cy 2$.  Then there does not exist
 any central group extension $\sigma { \rightarrowtail  } \wpi  \overset{ \alpha }{ \twoheadrightarrow } \pi   $   which satisfies the hypothesis of Theorem \ref{thm:t1}, and those of Theorem \ref{thm:t2}, where $\sigma\cong \cy 2$.

 We expect that  $0 = \bar\kappa_2^s(\pi)\colon H_2(\pi;\cy 2) \to L^s_0(\Zhat\pi)$ for all the groups in the list \eqref{rem:fournine}. If this holds, then there will not exist any groups $\pi$ of order $\leq 64$ for which $\bar\kappa^s_2(\pi) \neq 0$. Note that  all these groups  have $H^1(\Wh'(\Zhat\pi))=0$, hence $\bar\kappa'_2(\pi)=0$.
\end{remark}

\begin{example}\label{ex:thmEgroups}
  A complete list of extensions among groups of order 128  that satisfies the hypothesis of
Theorem \ref{thm:t1} and Theorem \ref{thm:t2} can be given as follows: 
\begin{enumerate}
\item[] $ (287 , 1327 ) $,  $ (288 , 1328 ) $,  $ (290 , 1330 ) $,  $
(693 , 3996 ) $,  $ (692 , 3998 ) $,  $ (704 , 4010 ) $, \\
$ (703 , 4012 ) $,  $ (668 , 4054 ) $,  $ (667 , 4055 ) $,  $ (670 ,
4056 ) $, $ (568 , 4255 ) $, $ (579 , 4312 ) $, \\
 $ (676 , 4316 ) $,  $ (725 , 4372 ) $,  $ (1375 , 7736 ) $,  $ (1376 ,
8129 ) $,   $ (1377 , 8177 ) $, \\
  $ (1547 , 9240 ) $,  $ (1549 , 9242 ) $,  $ (1576 , 10060 ) $
\end{enumerate}
where the pair $(i,j)$ being in this list means the $j$th group of order
$256$ has a quotient that is isomorphic to the $i$th group of order
$128$ and considered as a $\cy 2$ extension it satisfies the conditions
in Theorem \ref{thm:t1}  and Theorem \ref{thm:t2}.  All these pairs also
satisfy the result of Lemma \ref{lem:foursix} except  $ (568 , 4255 ) $.
Hence from the above list we can obtain a complete list of the minimal
 examples (all of order 128) that satisfy the conditions of Theorem C. 
 For all the groups in the above list, $SK_1(\Zhat\pi) \cong H_2(\pi;\bZ)/H^{\ab}_2(\pi) \neq 1$,
  by a GAP calculation (see Appendix \ref{sec:apptwo}),  and all except for $ (1547 , 9240 ) $,  $ (1549 , 9242 ) $,  $ (1576 , 10060 ) $
  have $SK_1(\Zhat\pi) = \cy 2$. The last three have $SK_1(\Zhat\pi)\cong \cy 2 \oplus \cy 2$.
\end{example}

\begin{example}\label{ex:corDgroups} The groups of order 128 in the list \eqref{ex:thmEgroups} with elementary abelianization are the following: 

$ (1375 , 7736 ) $,  $ (1376 ,
8129 ) $,   $ (1377 , 8177 ) $, 
  $ (1547 , 9240 ) $,  $ (1549 , 9242 ) $,  $ (1576 , 10060 ) $.

\smallskip\noindent
These groups all satisfy Corollary D.
\end{example}
%%%%%%%%%%%%%%%%%%%%%%%%%%%%%%%%%%%%%%%%%%%%%%%%%

\begin{remark}
      Let
              $\pi$ be the the Sylow $2$-subgroup of $U_3(4)$, which is $SG[64,245]$ in
the Small Groups Library.
              Then  $H^1(\Wh'(\Zhat\pi))=0$. Since $\pi$ has no $\cy 2$ extension
that satisfy the hypothesis of Theorem \ref{thm:t1},   we know that $H^1(SK_1(\Zhat\pi))=0$ by Remark
\ref{rem:r1}.  Hence we cannot use
our method for $\pi$ to decide if $\kappa^s_2(\pi) \neq0$. It is therefore important to study $SK_1(\Zpi)$ in more detail.      \end{remark}

      \begin{question} At present we do not have enough information about  the subgroup $Cl_1 (\Zpi) \subseteq SK_1(\Zpi)$ to make effective calculations of the maps in the Ranicki-Rothenberg exact sequences
$$ \dots \to H^1(SK_1(\Zpi)) \to L_0^s(\Zpi) \to L'_0(\Zpi) \to H^0(SK_1(\Zpi)) \to \dots$$
or in the exact sequence in Tate cohomology
$$ \dots \to H^0(SK_1(\Zhat\pi)) \to H^1(Cl_1(\Zpi)) \to H^1(SK_1(\Zpi))\to H^1(SK_1(\Zhat\pi)) \to \dots$$
This remains a challenge for the future (see \cite[Theorem 9.6]{Oliver:1988a}). Here are some sample questions:
      \begin{enumerate}
      \setlength{\itemsep}{0pt plus 8pt}  
      \item      Let $\pi$ be the the Sylow $2$-subgroup of $U_3(4)$. Is the map
$H^1(SK_1(\Zpi)) \to L_2^s(\Zpi)$ non-trivial ? For this group $\bar\kappa^s_2(\pi) = 0$ since $SK_1(\Zhat\pi) = 0$.
\item Let $\pi$ be one of the groups in the list \eqref{ex:thmEgroups}, but not in \eqref{ex:corDgroups}.
 Is it true that $\kappa'_2(\pi) \neq 0$ ? Does $\ker\kappa_2^s(\pi) = \ker\kappa_2^h(\pi)$ hold for these groups ?
\item More generally, do there exist examples where $\bar\kappa^s_2(\pi) = 0$, but $\kappa^s_2(\pi) \neq 0 $ ?
\item Do there exist examples where $\ker\kappa_2^s(\pi) \subsetneq \ker\kappa_2^{Cl_1}(\pi)$ ?
 
 \end{enumerate} 
\end{question}

\section{The proof of Theorem E}\label{sec:five}

We reduce the original ``oozing conjecture" for surgery up to homotopy equivalence to an explicit calculation in group homology.  As an application, we provide a counterexample to the conjecture (see Section \ref{subsec:fivethree}), but we are far from a systematic understanding about how to generate such examples. Are there infinitely many finite $2$-groups with non-trivial $\kappa'_4$ ?
For any finite $2$-group $\pi$, we introduced the notation
$$\lambda_4(\pi) := \delta\circ \beta\circ s_* \colon  H_4(\pi;\cy 2) \to H^1(\Wh'(\Zhat\pi))$$
for the composite, where 
$s_*\colon H_4(\pi;\cy 2) \to H_2(\pi; \cy 2)$ denote the dual homomorphism to $Sq^2 \colon H^2(\pi;\cy 2) \to H^4(\pi;\cy 2)$, and $\beta\colon H_2(\pi;\cy 2) \to H^0(\pi^{\ab})$ induced by the Bockstein. In addition, the short exact sequence
$$0 \to \Wh'(\Zpi) \to \overline{I(\Zhat\pi)} \to \pi^{\ab} \to 0$$
induces a (surjective) coboundary map $\delta\colon H^0(\pi^{\ab}) \to  H^1(\Wh'(\Zhat\pi))$. For the notation see \cite[p.~163]{Oliver:1988a}. 

\begin{proof}[The proof of Theorem E]
By Proposition \ref{prop:kappafourprime}, we have a commutative diagram:
$$\xymatrix@C+7pt{
H_4(\pi;\cy 2) \ar[r]^(0.4){\lambda_4(\pi)} \ar[d]_{\kappa'_4(\pi)} & H^1(\Wh'(\Zhat\pi))\ar[d]^{\bd}\\
L'_2(\Zpi) \ar[r]^{\rho_2} & L'_2(\Zhat\pi)
}$$
in which the boundary map $\bd$ is injective. 
By Lemma \ref{lem:threethree}, the torsion subgroup of $L'_2(\Zpi)$ is detected by the natural map
$$\rho_2\oplus p_*\colon L'_2(\Zpi) \to L'_2(\Zhat\pi) \oplus L'_2(\bZ\pi^{\ab})$$
 induced by $\rho_2$ and the projection $p \colon \pi \to \pi^{\ab}$. Moveover, the composite
$\rho_2\circ \kappa'_4 = 0$ for abelian groups since $H^1(\Wh'(\Zhat\pi)) = 0$, and  $\rho_2$ is injective for $\pi$ abelian (see \cite[5.2.2]{Wall:1976a}). 

Therefore the computation of $\kappa'_4(\pi)$ is reduced to determining $\lambda_4(\pi)$, and 
the fact that $\kappa'_4(\pi)$ vanishes on integral homology classes follows from Corollary \ref{cor:fivethree}. 
Finally,  there is an injection $L'_2(\Zpi)  \to L^h_2(\Zpi) $ and hence $\kappa^h_4 \neq 0$ if and only if $\kappa'_4\neq 0$, so it is enough to determine $\kappa'_4(\pi)$.
This reduces the original oozing conjecture that $\kappa^h_j = 0$, for $j \geq 4$, to an explicit group homology calculation, namely computing the map $\lambda_4(\pi)$.  
\end{proof}

\subsection{Necessary conditions for $\beta\circ s_* \neq 0$}
\label{subsec:fiveone}

\begin{definition} For any internal direct sum 
$\pi^{\ab} = C_1\oplus C_2\oplus \dots \oplus C_n$ of non-trivial cyclic subgroups, the \emph{associated basis} 
 $\cB=\{v_1,v_2,\dots v_n\}$ for
$H^0(\pi^{\ab})$ is the linearly independent set consisting of the elements $v_i \in C_i$ of order two, for $1\leq i \leq n$.
\end{definition}

In any such decomposition of  $\pi^{\ab}$  the orders of the $C_i$, and the number of cyclic subgroups of the same order, are invariants of $\pi$. However, the factors $C_i \subseteq \pi^{\ab}$ are not unique as subgroups of $\pi$.

\begin{lemma}\label{lem:cyclic_basis} Suppose that  $H^1(Wh'(\Zhat\pi))\neq 0$.Then there exist
cyclic subgroups $\{C_i\}$ of $\pi^{\ab}$, such that
$\pi^{\ab} =C_1\oplus C_2\oplus \dots \oplus C_n$,  where the associated basis $\cB=\{v_1,v_2,\dots v_n\}$ for 
$H^0(\pi^{\ab})$ satisfies:
\begin{enumerate}
\item For some $ k\leq n$, the set $\{\delta(v_1),\dots, \delta(v_k)\}$ is a basis for $H^1(Wh'(\Zhat\pi))$;
\item The set $\{v_{k+1},\dots, v_n\} \subseteq \cB$ is a basis for $K$.
\end{enumerate} 
\end{lemma}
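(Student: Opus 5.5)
The plan is to reduce the statement to linear algebra over $\bF_2$ on the socle $H^0(\pi^{\ab}) = \Omega_1(\pi^{\ab}) = \{a\in\pi^{\ab} : 2a = 0\}$. I take $K := \ker\{\delta\colon H^0(\pi^{\ab}) \to H^1(\Wh'(\Zhat\pi))\}$. Since $\delta$ is surjective, any basis $\{v_1,\dots,v_n\}$ of $H^0(\pi^{\ab})$ whose last $n-k$ members form a basis of $K$ automatically has $\{\delta(v_1),\dots,\delta(v_k)\}$ a basis of $H^1(\Wh'(\Zhat\pi))$, with $k = \dim H^1(\Wh'(\Zhat\pi))\geq 1$ by hypothesis. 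So everything hinges on realising such a basis as the associated basis of a cyclic decomposition of $A := \pi^{\ab}$.

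The first step is to identify which bases of $\Omega_1(A)$ arise as associated bases. Consider the height filtration
$$\Omega_1(A) = F_0 \supseteq F_1 \supseteq F_2 \supseteq \cdots, \qquad F_j := \Omega_1(A)\cap 2^jA.$$
I claim that a basis $\cB$ of $\Omega_1(A)$ is an associated basis if and only if it is adapted to this filtration, that is, $\cB\cap F_j$ is a basis of $F_j$ for every $j$.

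The direction needed here is the converse. For each $v\in\cB$, let $j(v)$ be the largest $j$ with $v\in F_j$, and choose $g_v\in A$ with $2^{j(v)}g_v = v$; then $g_v$ has order $2^{j(v)+1}$. To see that the sum $\sum_v \langle g_v\rangle$ is direct, take a nontrivial relation $\sum_v c_v g_v = 0$ and multiply it by a suitable power of $2$. This produces a nontrivial relation among those $v$ of a single height $j$, taken modulo $F_{j+1}$, which contradicts adaptedness. The orders of the cyclic subgroups $\langle g_v\rangle$ then multiply to $|A|$, because the number of invariants of exponent $2^{j+1}$ equals $\dim F_j/F_{j+1}$. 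Hence $A = \bigoplus_v \langle g_v\rangle$, and its associated basis is exactly $\cB$.

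The remaining step, which I expect to be the only real point, is to produce a basis of $\Omega_1(A)$ that is adapted to the flag $\{F_j\}$ and simultaneously contains a basis of $K$. I would construct it by descending induction on $j$. At each stage I choose vectors in $K\cap F_j$ completing a basis of $K\cap F_{j+1}$ to a basis of $K\cap F_j$. I then choose further vectors in $F_j$ completing the resulting basis of $(K\cap F_j)+F_{j+1}$ to a basis of $F_j$; this is possible because $F_{j+1}$ is already spanned by the earlier choices.

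Then I check the two required properties. Independence holds because, modulo $F_{j+1}$, the new $K$-vectors span $(K\cap F_j+F_{j+1})/F_{j+1}$ independently, and the complementary vectors are independent from these. The $K$-vectors span all of $K$, since at each level $K\cap F_j$ is spanned by $K\cap F_{j+1}$ together with the new $K$-vectors. (This is the standard fact that a flag and a single subspace always admit a common adapted basis.) Finally, ordering the non-$K$ vectors first as $v_1,\dots,v_k$ and the $K$-vectors as $v_{k+1},\dots,v_n$, and lifting via the previous paragraph, gives the cyclic subgroups $C_i$ with properties (i) and (ii).
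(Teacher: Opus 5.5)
Your proof is correct, but it takes a different route from the paper's. The paper starts from an arbitrary cyclic decomposition with orders $m_1\geq m_2\geq\dots\geq m_n$ and writes $\delta$ as a matrix in the associated basis. It puts this matrix in reduced row echelon form, then clears the off-pivot entries by column operations $\mathrm{col}_j\mapsto\mathrm{col}_j+\mathrm{col}_k$ with $k<j$. Each such operation is realized by replacing the generator $g_j$ by $g_jg_k^{t}$, $t=m_k/m_j$, and the non-pivot columns then give the basis of $K$.

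You instead characterize intrinsically which bases of $H^0(\pi^{\ab})=\Omega_1(\pi^{\ab})$ are associated bases, namely those adapted to the height filtration $F_j=\Omega_1(\pi^{\ab})\cap 2^j\pi^{\ab}$. You then use the standard fact that a flag and a single subspace admit a common adapted basis. The two arguments are closely related: the paper's admissible moves add the socle element of a factor of larger or equal order to that of a smaller or equal one, and these are exactly elementary basis changes preserving adaptedness to your filtration. Both proofs ultimately build a basis adapted simultaneously to the height flag and to $K$. The paper does this by Gaussian elimination with columns ordered by decreasing order, and you do it by descending induction on height.

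Your version gives a self-contained justification of a step the paper leaves implicit: that each replacement $C_j\to\langle g_jg_k^{t}\rangle$ still gives an internal direct sum. It also describes all associated bases, not just one. The paper's version is more algorithmic, in keeping with its GAP computations.

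One small simplification to your directness step: adaptedness is not needed there. Multiplying a nontrivial relation $\sum c_vg_v=0$ by the appropriate power of $2$ gives $\sum_{v\in S}v=0$ for some nonempty $S\subseteq\cB$, which already contradicts linear independence of $\cB$. Adaptedness is only needed for the count showing that the orders of the $\langle g_v\rangle$ multiply to $|\pi^{\ab}|$.
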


\begin{proof}  Let $\{ C_i \vv 1 \leq i \leq n\}$ denote a collection of cyclic subgroups of $\pi^{\ab}$ giving an internal direct sum decomposition. For each factor, pick a generator so that $C_i = \la g_i \ra$.  If $|C_i| = m_i$, for $1 \leq i \leq n$, we can choose the ordering so that
$m_1 \geq m_2 \geq  \dots \geq m_n >1$.

 Let $\{w_1, w_2, \dots, w_m\}$, with $0 <m \leq n$,  be a basis for $W:= H^1(Wh'(\Zhat\pi)\cong (\cy 2)^m$  over $\bF_2$.
 In terms of the basis $\{v_1, v_2, \dots, v_n\}$  for $V: = H^0(\pi^\ab)$, we can express the linear map $\delta\colon V \to W$ as  an $m \times n$ matrix $M = (a_{ij})$ of rank $m$ such that 
$$w_i = \sum_{i = 1}^m a_{ij} v_j\ .$$
By using row operations, we can assume that $M$ is in reduced row echelon form. Moreover, we are also allowed to use elementary column operations of the form $ \text{col}_j \to \text{col}_j  +  \text{col}_i$, if $j > i$. For example, if $ \text{row}_i$  has its first non-zero entry in column $k$,  and $a_{ij} = 1$ for some $j > k$, we can replace the factor $C_j = \la g_j\ra$  by $C'_j = \la g_j g_k^t\ra$, where $t= m_k/m_j$. After repeating these column operations, we may assume that $M$ is  only a single non-zero entry in each row. By re-ordering the cyclic factors, we obtain an associated basis $\cB$ of the required form.
\end{proof}

 Suppose that $\pi$ is a $2$-group such that  $\beta \circ
s_*(\wx )\neq 0$ for some $\wx  \in H_4(\pi;\cy 2)$. Let 
$\pi^{\ab} =C_1\oplus C_2\oplus \dots \oplus C_n$ be a decomposition of $\pi^{\ab}$ with the properties given in Lemma \ref{lem:cyclic_basis}. By re-ordering the cyclic factors if necessary, we may suppose that 
$p_*(\beta \circ s_*(\wx ))\neq 0 \in H^0(C_1; \cy 2)$, where $p \colon \pi \to C_1$ denotes projection onto the first factor. In particular, the preimage
 $N \leq \pi$ of $C_2\oplus
\dots \oplus C_n$ under the natural projection from $\pi$  to $\pi^{\ab}$  contains the preimage of $K$.

Let $g_1$ be an element in $\pi$ such that $[g_1]$ generates $C_1$ in
$\pi^{\ab}$. Let $\widehat C_1$ denote the subgroup of $\pi $ generated by $g_1$.
Since $v_1$ is in $C_1$, there exists a positive integer $r$  such that
$[g_1]^r=v_1$. Hence $g_1^r$ is not conjugate to its inverse in $\pi$.
Therefore the order of $g_1$ is strictly greater than $2r$. This means
$|\widehat C_1|>|C_1|$. Also notice that $\pi/N$ is a cyclic group and $|\pi/N|=|C_1|$.
Now let $p\colon \pi \to \pi/N$ denote the natural projection  and
consider the diagram

\eqncount
\begin{equation}\label{eq:fivesix}
\vcenter{\xymatrix{ H_4(\pi;\cy 2) \ar[d]_{p_*} \ar[r]^{s_*} &H_2(\pi;\cy
2)
\ar[d]_{p_*} \ar[r]^{\beta} &
             H^0(\pi^{\ab}) \ar[d]_{p_*}  \\
             H_4(\pi/N ;\cy 2)  \ar[r]^{s_*} &H_2(\pi/N ;\cy 2)
\ar[r]^{\beta} &
             H^0(\pi/N)
     }}
  \end{equation}
  Since $\pi/N$ is cyclic, the maps $s_*$ and $\beta$ in the lower row are both isomorphisms.
     Then we have  $p_* \circ \beta \circ s_*(\wx )\neq 0$, and hence $p_*\colon H_4(\pi;\cy 2) \to  H_4(\pi/N;\cy 2)$ is 
     non-zero (and conversely). Therefore
$p_*\colon H_2(\pi;\cy 2)\rightarrow H_2(\pi/N ;\cy 2)$ is also nonzero.
Dually this means $p^*\colon H^2(\pi/N ;\cy 2) \rightarrow H^2(\pi;\cy
2)$ is nonzero. 

\smallskip
We can summarize this last point as follows:
\begin{lemma}\label{lem:fivetwo}
  If $(\beta \circ s_*) (\wx ) \neq 0$, for some $\wx  \in H_4(\pi;\cy 2)$, then there exists a cyclic quotient $p\colon \pi \to C_1$ such that $0 \neq p_*(\wx ) \in H_4(C_1;\cy 2)$.  Moreover, 
  \begin{enumerate}
  \item $0 \neq \wx \cap \theta \in H_2(\pi; \cy 2)$, where
 $\theta = p^*(\bar\theta) \in H^2(\pi;\cy 2)$ is the pullback of the generator of $H^2(C_1; \cy 2)$.
 \item  If $\pi/N = C_1 \cong \cy 2$, we have $\theta = \phi^2$ where  $\phi = p^*(\phi_1) \in H^1(\pi; \cy 2)$ is the pullback of the generator $\phi_1 \in H^1(C_1; \cy 2)$.
 \end{enumerate}
 \end{lemma}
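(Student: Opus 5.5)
The plan is to extract the statement from the naturality diagram \eqref{eq:fivesix}, using the decomposition of $\pi^{\ab}$ supplied by Lemma \ref{lem:cyclic_basis}. Write $\pi^{\ab} = C_1\oplus\dots\oplus C_n$ with associated basis $\cB$. Since $(\beta\circ s_*)(\wx)\neq 0$ in $H^0(\pi^{\ab})$, some coordinate of this element with respect to $\cB$ is non-zero. After reordering we may assume it is the $v_1$-coordinate. Let $N$ be the preimage of $C_2\oplus\dots\oplus C_n$ and let $p\colon \pi\to \pi/N\cong C_1$ be the quotient map.

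Step 1 is the main claim. The induced map $H^0(\pi^{\ab})\to H^0(C_1)$ is projection onto the $v_1$-coordinate, so $p_*\beta s_*(\wx)\neq 0$. By commutativity of \eqref{eq:fivesix} this equals $\beta s_*(p_*\wx)$. For a cyclic $2$-group $C_1$, the maps $s_*\colon H_4\to H_2$ and $\beta\colon H_2(C_1;\cy 2)\to H^0(C_1)$ are isomorphisms between groups of order two. For $s_*$, this holds because $Sq^2$ carries the degree-$2$ generator to its square, and the square is the degree-$4$ generator, for every cyclic $2$-group including $C_1=\cy 2$. For $\beta$, the Bockstein identifies the mod $2$ reduction with $H_1(C_1;\bZ)$, and its elements of order $2$ form $H^0$. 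Hence $p_*\wx\neq 0$.

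Step 2 proves part (i). Let $\bar\theta$ be the generator of $H^2(C_1;\cy 2)$. Since $H^4(C_1;\cy 2)$ is generated by $\bar\theta^2$ and $p_*\wx\neq 0$, we get $\langle \theta^2,\wx\rangle = \langle \bar\theta^2, p_*\wx\rangle = 1$. Using $\langle a\cup b, x\rangle = \langle a, x\cap b\rangle$, this gives $\langle \theta, \wx\cap\theta\rangle = 1$, so $\wx\cap\theta\neq0$. Equivalently, $s_*\wx$ pairs non-trivially with $\theta$, since $Sq^2\theta=\theta^2$.

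Step 3 proves part (ii). If $C_1\cong\cy 2$, then $H^*(C_1;\cy 2)=\bF_2[\phi_1]$, so $\bar\theta=\phi_1^2$. Naturality of cup products then gives $\theta=p^*(\phi_1)^2=\phi^2$.

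The only real point to watch is the claim that the bottom-row maps in \eqref{eq:fivesix} are isomorphisms. For $\beta$, I would check it via the long exact Bockstein sequence for the cyclic group.
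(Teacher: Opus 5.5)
Your proposal is correct and is essentially the paper's argument: project onto a cyclic summand of $\pi^{\ab}$ on which $(\beta\circ s_*)(\wx)$ has a non-zero coordinate, then use that $s_*$ and $\beta$ are isomorphisms for cyclic $2$-groups in the naturality diagram \eqref{eq:fivesix}. Any decomposition of $\pi^{\ab}$ into cyclic summands works here, so you do not need Lemma \ref{lem:cyclic_basis} or its hypothesis $H^1(\Wh'(\Zhat\pi))\neq 0$; also, your pairing computation $\langle \theta, \wx\cap\theta\rangle = \langle \bar\theta^2, p_*\wx\rangle = 1$ makes part (i) more explicit than the paper, which only concludes dually that $p^*\colon H^2(\pi/N;\cy 2)\to H^2(\pi;\cy 2)$ is non-zero.
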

 \begin{corollary}\label{cor:fivethree} If $ \wx \in \Image\{H_4(\pi; \bZ) \to H_4(\pi; \cy 2)\}$, then  $(\beta \circ s_*) (\wx ) =0$. 
 \end{corollary}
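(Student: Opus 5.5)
We argue by contradiction using Lemma \ref{lem:fivetwo}, reducing to a computation on a cyclic quotient. Suppose $\wx$ is the mod $2$ reduction of an integral class $x \in H_4(\pi;\bZ)$ and that $(\beta\circ s_*)(\wx)\neq 0$. Lemma \ref{lem:fivetwo} gives a cyclic quotient $p\colon \pi \to C_1$ with $p_*(\wx)\neq 0$ in $H_4(C_1;\cy 2)$. Diagram \eqref{eq:fivesix} makes the construction natural in $p$: the lower row consists of isomorphisms for $\pi/N$ cyclic, and nonvanishing of $p_*\circ\beta\circ s_*(\wx)$ is equivalent to $p_*(\wx)\neq 0$.

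The key step is naturality of reduction mod $2$ with respect to $p$. This gives $p_*(\wx) = \rho(p_*(x))$, where $p_*(x) \in H_4(C_1;\bZ)$ and $\rho\colon H_4(C_1;\bZ)\to H_4(C_1;\cy 2)$ is reduction. For a finite cyclic group $C_1 \cong \cy{2^k}$, the integral homology vanishes in positive even degrees, since $H_{2i}(C_1;\bZ) = 0$ for $i>0$. Hence $H_4(C_1;\bZ)=0$, so $p_*(\wx)=\rho(0)=0$. This contradicts the conclusion of Lemma \ref{lem:fivetwo}, and therefore $(\beta\circ s_*)(\wx)=0$.

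There is one point to check. Lemma \ref{lem:fivetwo} is applied to the decomposition from Lemma \ref{lem:cyclic_basis}, which assumes $H^1(\Wh'(\Zhat\pi))\neq 0$. However, the part of the argument we use needs only $\beta\circ s_*(\wx)\neq 0$ in $H^0(\pi^{\ab})$. For that, pick any cyclic decomposition of $\pi^{\ab}$ and a factor $C_1$ on which the projection of $\beta\circ s_*(\wx)$ is nonzero; diagram \eqref{eq:fivesix} still applies. So no extra hypothesis is needed.

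As a check, the equivalent cohomological form can be recovered through Lemma \ref{lem:fivetwo}(i). Dually, the class $\bar\theta^2$ generating $H^4(C_1;\cy 2)$ is the reduction of an integral class $\bar\theta_{\bZ}^2$, where $\bar\theta_{\bZ}$ generates $H^2(C_1;\bZ)$. Its pairing with $p_*(\wx)$ equals the mod $2$ reduction of $\langle \bar\theta_{\bZ}^2, p_*(x)\rangle$, taken with $\cy{2^k}$ coefficients appropriately. This vanishes because $p_*(x)$ is an integral class of finite order paired through a torsion-free route. The direct homology vanishing argument above is simpler, so it is the one I would write.
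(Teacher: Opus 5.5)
Your proof is correct and is the paper's argument: the cyclic quotient from Lemma \ref{lem:fivetwo} gives $p_*(\wx)=\rho(p_*(x))$, and $p_*(x)$ lies in $H_4(\pi/N;\bZ)=0$, a contradiction. Your observation that the lemma only needs some cyclic factor on which $\beta\circ s_*(\wx)$ projects nontrivially, rather than $H^1(\Wh'(\Zhat\pi))\neq 0$, is also correct; the closing ``check'' paragraph is vaguely worded and adds nothing to the direct vanishing argument.
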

 \begin{proof} If $ \wx \in \Image\{i_*\colon H_4(\pi; \bZ) \to H_4(\pi; \cy 2)\}$,  then the composite $p_* \circ i_*$ factors through $H_4(\pi/N; \bZ) = 0$,  for any cyclic quotient $\pi/N$. Hence 
 $p_*(\wx ) = 0$ and  $(\beta \circ s_*) (\wx ) = 0$ by Lemma \ref{lem:fivetwo}. 
  \end{proof}
\subsection{A sufficient condition for $\delta \circ \beta\circ s_*
\neq 0$}\label{subsec:fivetwo}   
 We assume that $H^1(Wh'(\Zhat\pi)) \neq 0$ and that
 $$\pi^{\ab} = C_1 \oplus C_2 \oplus \dots \oplus C_n$$ is an internal direct sum satisfying the conditions of Lemma \ref{lem:cyclic_basis}. We will call this an \emph{adapted decomposition of $\pi^{\ab}$}. 
 Let $p \colon \pi \to C_1$ be the first factor projection with respect to an adapted decomposition of $\pi^{\ab}$, and let $N = \ker p$.  Since $[\pi, \pi] \leq N$. we have  a natural map $N/[\pi,\pi] \to \pi^{\ab}$.
If $H^1(Wh'(\Zhat\pi)) \neq 0$, then  $\delta(v_1) \neq 0$, and $K = \ker \delta$ is contained in the image of  the induced map $H^0(N/[\pi, \pi]) \to H^0(\pi^\ab)$.
We
have the following lemma that gives a sufficient condition for  $\delta
\circ \beta\circ s_* \neq 0$.

\begin{lemma}\label{lem:suff_cond}   Let $p \colon \pi \to C_1$ be the first factor projection with respect to an adapted decomposition of $\pi^{\ab}$,  such that $\delta(v_1) \neq 0$, and let $N = \ker p$.
      If $p^* (x) \neq 0$, for some $x \in H^4(\pi/N;\cy 2)$, then 
      $(\delta\circ \beta\circ s_*) (\wx )\neq 0$, for some
$\wx  \in H_4(\pi;\cy 2)$. 
\end{lemma}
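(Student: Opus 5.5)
The plan is to dualize the hypothesis and then run the naturality diagram \eqref{eq:fivesix} in reverse. Since $\pi/N = C_1$ is cyclic, $H^4(\pi/N;\cy 2)\cong \cy 2$, generated by some $x$. Suppose $p^*(x)\neq 0$ in $H^4(\pi;\cy 2)$. Because mod 2 homology and cohomology are Kronecker dual over $\bF_2$, we can choose $\wx \in H_4(\pi;\cy 2)$ with $\la p^*(x), \wx\ra = 1$. Then $\la x, p_*(\wx)\ra = 1$, so $p_*(\wx)\neq 0$ in $H_4(C_1;\cy 2)$. This $\wx$ is the candidate.

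The next step is to push $\wx$ through the bottom row of \eqref{eq:fivesix}. For the cyclic group $C_1$ the maps $s_*$ and $\beta$ are isomorphisms, as noted before Lemma \ref{lem:fivetwo}: $Sq^2$ sends the degree-2 generator to the degree-4 generator, since on $H^*(C_1;\cy 2)$ it is the square of the Bockstein-type class, and the Bockstein identifies $H_2(C_1;\cy 2)$ with the element of order two in $C_1$. Commutativity of \eqref{eq:fivesix}, which is naturality of $s_*$ and of $\beta$, gives
$$p_*(\beta\circ s_*(\wx)) = \beta\circ s_*(p_*(\wx)) \neq 0 \in H^0(C_1).$$
Here $p_*\colon H^0(\pi^{\ab})\to H^0(C_1)$ is the projection of the adapted decomposition onto the first factor. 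In the associated basis $\cB$, this says that $\beta s_*(\wx) = v_1 + \sum_{j\ge 2} c_j v_j$ for some $c_j \in \cy 2$.

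Finally apply $\delta$. By Lemma \ref{lem:cyclic_basis}, the $v_j$ with $j>k$ lie in $K=\ker\delta$, and $\delta(v_1),\dots,\delta(v_k)$ are linearly independent. It follows that
$$\delta\beta s_*(\wx) = \delta(v_1)+\sum_{2\le j\le k} c_j\delta(v_j) \neq 0,$$
using $\delta(v_1)\neq 0$ together with the independence. If one prefers to assume only $\delta(v_1)\neq 0$, without using the full basis property, then the $c_j$ must be controlled more carefully. One option is to replace $\wx$ by $\wx + \widehat y$, where the correction $\widehat y$ lies in $\ker p_*$. The other is to observe that in an adapted decomposition $k$ can be taken with the $\delta(v_i)$ independent, which is exactly what Lemma \ref{lem:cyclic_basis} gives.

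I expect the main obstacle to be this last independence step: making sure the components $c_j$ for $2\le j\le k$ cannot cancel $\delta(v_1)$. The cleanest route is the basis property (1) of Lemma \ref{lem:cyclic_basis}. If that turns out to be insufficient as stated, I would fall back on projecting further through $\delta$ onto the coordinate dual to $\delta(v_1)$ in $H^1(\Wh'(\Zhat\pi))$.
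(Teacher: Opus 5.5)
Your proposal is correct and is essentially the paper's proof: you dualize to get $p_*(\wx)\neq 0$, push through the naturality diagram using that $\beta\circ s_*$ is an isomorphism for the cyclic quotient $C_1$, and then use the adapted decomposition to see that $\delta$ cannot kill $\beta s_*(\wx)$. The paper phrases the last step as $K=\ker\delta\subseteq\ker p_*$, which comes from property (2) together with $\delta(v_1)\neq 0$ and is equivalent to your coordinate-and-independence argument, so the fallback options in your final paragraph are not needed.
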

\begin{proof}
 First, notice that $H^4(\pi/N;\cy 2)\cong \cy 2$, hence $p_*\colon
H_4(\pi;\cy 2)\rightarrow  H_4(\pi/N;\cy 2)$ is nonzero if and only if
$p^*\colon H^4(\pi/N;\cy 2)\rightarrow  H^4(\pi;\cy 2)$ is nonzero.
Hence the existence of $x \in H^4(\pi/N;\cy 2)$ implies that there exists $\wx  \in 
H_4(\pi;\cy 2)$ such that $0 \neq p_*(\wx ) \in H_4(\pi/N; \cy 2)$. 

Second, notice that for all $\wz\in H^0(\pi^{\ab})$ we have $\delta(\wz)\neq
0$ whenever $p_*(\wz)\neq 0$. To see this, let $L :=\ker p_* \subset H^0(\pi^{\ab})$, $\bar L := L/K\cap L$,
and consider the diagram 
\eqncount
\begin{equation}\label{eq:detect}
\vcenter{\xymatrix@C-8pt@R-8pt{&0 \ar[d] &0 \ar[d] & 0 \ar[d]&\\
0 \ar[r]&K \cap L \ar[r] \ar[d]& L \ar[d] \ar[r] & \bar L\ar[d] \ar[r] & 0\\
0 \ar[r]& K \ar[r] \ar[d]& H^0(\pi^{\ab}) \ar[r]^(0.4){\delta}\ar[d]^{p_*}&  H^1(Wh(\Zhat\pi)) \ar[r]\ar[d] & 0\\
0 \ar[r]&K/K\cap L\ar[r]\ar[d]& H^0(\pi/N) \ar[r] \ar[d]& V \ar[d] \ar[r] & 0\\
&0& 0 & 0 &\\
}}
\end{equation}
 Note that  $H^0(N/[\pi, \pi]) = L$, so that  $K \subseteq L$ by construction. Hence $K\cap L = K$, $K/K\cap L = 0$, and $H^0(\pi/N)\cong V$, so that $L \cong \bar L$. Now we claim that $p_*(\wz) \neq 0$, for $\wz = \beta\circ s_*(\wx)$.
 Since $\pi/N =C_1$ is cyclic, by assumption, this follows by the diagram \eqref{eq:fivesix}
 together with the fact that the composite 
$\beta\circ s_*\colon H_4(\pi/N; \cy 2) \to H^0(\pi/N)$ is an isomorphism. Therefore $(\delta\circ \beta\circ s_*) (\wx )\neq 0$.
\end{proof}

\subsection{An example with $\delta \circ \beta \circ s_*\neq 0$  }\label{subsec:fivethree}

Let $ V \overset{ i }{ \rightarrowtail  } \pi  \overset{ \alpha }{
      \twoheadrightarrow } W   $ be a group extension  and $M$ a
$\pi$-module.
Then we have the Lyndon-Hochschild-Serre spectral sequence $
\{E^{n,m}_{r}(\alpha,M),d_{r}\}$ which converges to $H^{*}(\pi;M)$ and
has second page given by:
\begin{equation*}
      E^{n,m}_{2}(\alpha,M)=H^{n}(W ;H^{m}(V ;M)).
\end{equation*}

Let $\pi:= G(16384)$ be the group of order $2^{14} = 16384$
given by the following presentation
\begin{equation*}
      \pi= \left\langle
      x_1,x_2,x_3,x_4,x_5,x_6,x_7,z_1,z_2,z_3,z_4,z_5,z_6,z_7 \vv r\in R
\right\rangle
\end{equation*}
where $R$ contains the relations
$x_i^2 = z_i$,
$z_i^2 = 1$,
$[x_i,z_j]=1$,
$[z_i,z_j]=1$,
$[x_i,x_1]=1$ for all $i, j\in \{1,2,3,4,5,6,7\}$,
$[x_i,x_j]=1$ for all $i \in \{2,3\}$, $j\in \{4,5,6,7\}$,
$[x_i,x_j]=1$ for all $i \in \{4,5\}$, $j\in \{6,7\}$,
$[x_2,x_3]=z_1$, $[x_4,x_5]=z_2$, $[x_6,x_7]=z_3$.
Notice that we have a central group extension  $ V \overset{ i }{
      \rightarrowtail  } \pi  \overset{ \alpha }{
      \twoheadrightarrow } W   $ where both
\begin{equation*}
      V=\left\langle z_1,z_2,z_3,z_4,z_5,z_6,z_7 \right\rangle
      \cong (\cy 2 )^{7}
\end{equation*}
and
\begin{equation*}
      W=\pi/V=\left\langle [x_1],[x_2],[x_3],[x_4],[x_5],[x_6],[x_7]
      \right\rangle
      \cong (\cy 2 )^{7}
\end{equation*}
are elementary abelian  (see  \cite{Pakianathan:2012} for information about the spectral sequence calculations in this setting).

\begin{corollary}\label{cor:ooze_example} The group $\pi = G(16384)$ has $\kappa'_4(\pi) \neq 0$.  
\end{corollary}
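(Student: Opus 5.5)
By Theorem E it is enough to show $\lambda_4(\pi)=\delta\circ\beta\circ s_*\neq 0$. I would get this from Lemma \ref{lem:suff_cond}. That needs two things: an adapted decomposition of $\pi^{\ab}$ whose first factor $C_1$ has $\delta(v_1)\neq 0$, and a nonzero pullback $p^*\colon H^4(\pi/N;\cy 2)\to H^4(\pi;\cy 2)$ for the projection $p\colon\pi\to C_1$.

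\textbf{Step 1: the abelianization.} The commutator subgroup is $[\pi,\pi]=\langle z_1,z_2,z_3\rangle$. Hence $\pi^{\ab}$ is generated by the $[x_i]$. Each $[x_i]$ has order $4$ if $i\ge 4$, since then $x_i^2=z_i\notin[\pi,\pi]$. For $i=1,2,3$ the class $[x_i]$ has order $2$, since $z_i\in[\pi,\pi]$ there. So
$$\pi^{\ab}\cong (\cy 2)^3\oplus(\cy 4)^4,$$
with $C_1=\langle[x_1]\rangle\cong\cy 2$ among the order-two factors.

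\textbf{Step 2: $H^1(\Wh'(\Zhat\pi))$ via formula \eqref{eq:fourseven}.} I would compute the quotient of $H^0(\pi^{\ab})$ by classes of elements conjugate to their inverses. First, $x_1$ is central with $x_1^2=z_1\neq 1$, so $x_1$ is not conjugate to $x_1^{-1}$. I then need to show that no element $g$ with $[g]=[x_1]\cdot(\text{element of }K)$ is conjugate to its inverse. Concretely, for $g$ mapping to $[x_1]$ modulo the other generators, I would check that $hgh^{-1}=g^{-1}$, i.e.\ $[h,g]=g^{-2}$, is impossible. The point is that $g^2$ always contains a $z_1$ factor coming from $x_1^2$, and commutators of any $g$ with $x_1$ in it cannot produce that factor in the needed way. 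This parity bookkeeping is the same kind of argument as in Section \ref{subsec:k2nonzero}. It would give $\delta(v_1)\neq0$. By Lemma \ref{lem:cyclic_basis} we can then arrange an adapted decomposition with $C_1=\langle[x_1]\rangle$ and $N=\langle x_2,\dots,x_7,V\rangle$, which is in fact a direct-factor complement, $\pi=\langle x_1\rangle N$ with $x_1$ central of order $4$.

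\textbf{Step 3: the cohomological condition.} By Lemma \ref{lem:fivetwo}(ii), the generator of $H^4(\cy 2;\cy 2)$ is $\phi_1^4$. So I need $\phi^4\neq0$ in $H^4(\pi;\cy 2)$, where $\phi$ is dual to $x_1$. The subtlety is that $x_1^2=z_1$, and $z_1=[x_2,x_3]$ is a commutator in $N$, so $\phi^2$ is related to the $k$-invariant class. I would use the LHS spectral sequence for the central extension $V\rightarrowtail\pi\twoheadrightarrow W$. Here $E_2=H^*(W)\otimes H^*(V)$, and $d_2(t_i)$ is the class of the extension in $H^2(W)$, for $t_i$ dual to $z_i$:
$$d_2 t_1=a_1^2+a_2a_3,\quad d_2t_2=a_4a_5,\quad d_2t_3=a_6a_7,\quad d_2t_i=a_i^2\ \ (i\ge4),$$
with $a_i$ dual to $x_i$. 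The class $\phi$ is $a_1$ inflated. I must show $a_1^4$ survives to $E_\infty^{4,0}$, i.e.\ $a_1^4$ is not in the ideal generated by the transgressions up to $E_3$ (and by $d_3,\dots$ images in degree $4$). Modulo $d_2$-images, $a_1^2\equiv a_2a_3$, so $a_1^4\equiv a_2^2a_3^2$. The question becomes whether $a_2^2a_3^2$ lies in the ideal $(a_1^2+a_2a_3,\ a_4a_5,\ a_6a_7,\ a_4^2,\dots,a_7^2)$ in degree $4$ of the polynomial ring $\bF_2[a_1,\dots,a_7]$. It does not: the generators involving only $a_1,a_2,a_3$ reduce to the single relation $a_1^2+a_2a_3$, and $a_2^2a_3^2$ is not a multiple of it in degree $4$. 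I would finish with a monomial-grading check, setting $a_4=\dots=a_7=0$. Higher differentials hitting $E^{4,0}$ come from $E_3^{1,2}$ and $E_4^{0,3}$, and these need the Kudenko/Pakianathan description of the transgressions of $t_i^2$ (namely $Sq^1$ of the $d_2$-classes). I would verify directly that the image of these in degree $4$ does not contain $a_1^4$. The easiest way is to test on a restriction: restrict to the subgroup $\langle x_1,x_2,x_3,z_1\rangle$, a group of order $16$ with $x_1$ central, where $\phi^4\neq0$ can be checked directly, or via a GAP computation.

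\textbf{Main obstacle.} I expect the main obstacle to be the Step 3 verification that $\phi^4$ survives, because of the higher differentials $d_3$ (transgressions of $t_i^2$, giving $Sq^1(a_1^2+a_2a_3)=a_2^2a_3+a_2a_3^2$). These could in principle kill degree-$4$ classes. I would first try the restriction to the small subgroup, and fall back on a GAP/Magma cohomology ring computation if needed. Once $\phi^4\neq0$ is established, Lemma \ref{lem:suff_cond} gives $\lambda_4(\pi)\neq0$, and Theorem E gives $\kappa'_4(\pi)\neq0$.
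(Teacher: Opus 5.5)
Your outline follows the paper's route. You use Theorem E and Lemma \ref{lem:suff_cond} with $C_1=\langle[x_1]\rangle\cong\cy 2$ and $N=\langle z_1,x_2,\dots,x_7\rangle$, which reduces everything to showing that $a_1^4$ is not in the kernel $I$ of inflation $H^4(W;\cy 2)\to H^4(\pi;\cy 2)$.

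\textbf{The gap is in Step 3, which starts from the wrong transgressions.} Since $x_2^2=z_2$ and $x_3^2=z_3$ (which you use yourself in Step 1), one has $d_2t_2=a_2^2+a_4a_5$ and $d_2t_3=a_3^2+a_6a_7$, not $a_4a_5$ and $a_6a_7$. With the correct ideal your test collapses. Setting $a_4=\dots=a_7=0$ turns $d_2t_2$ and $d_2t_3$ into $a_2^2$ and $a_3^2$. Then $a_1^4=(a_1^2+a_2a_3)^2+a_2^2\cdot a_3^2$ lies in the specialized ideal, so this specialization detects nothing. Your fallback fails for the same reason. The subgroup $\langle x_1,x_2,x_3\rangle$ contains $z_1,z_2,z_3$ and has order $64$, not $16$. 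There $d_2t_2=a_2^2$ and $d_2t_3=a_3^2$, so $\phi^4$ restricts to zero. The generators $x_4,\dots,x_7$ exist precisely to keep $a_2^2$ and $a_3^2$ alive, so any check that discards them must fail.

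\textbf{What is missing is the computation of the paper's Lemma \ref{lem:fivefive}.} Let $I_2=(a_1^2+a_2a_3,\,a_2^2+a_4a_5,\,a_3^2+a_6a_7,\,a_4^2,\dots,a_7^2)$. Modulo $I_2$ one gets $a_1^4\equiv a_2^2a_3^2\equiv a_4a_5a_6a_7$. In degree $4$, $I$ is $I_2$ plus the linear multiples of $\theta=d_3(t_1^2)=Sq^1(a_1^2+a_2a_3)=a_2^2a_3+a_2a_3^2$. The other $d_3(t_i^2)$ already lie in $I_2$, and nothing else reaches $E^{4,0}$ because the $d_2t_i$ form a regular sequence of quadrics. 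So you must show that $a_4a_5a_6a_7$ is not in the span of the $a_j\theta$ modulo $I_2$.

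One quick way to do this:
\begin{itemize}
\item The generators of $I_2$ have pairwise coprime leading terms $a_1^2,\dots,a_7^2$ under lex order, so normal forms are squarefree monomials.
\item Then $\theta\equiv a_3a_4a_5+a_2a_6a_7$.
\item Among the $a_j\theta$, only $a_2\theta\equiv a_2a_3a_4a_5+a_4a_5a_6a_7$ and $a_3\theta\equiv a_2a_3a_6a_7+a_4a_5a_6a_7$ involve $a_4a_5a_6a_7$.
\item Each of these also carries a monomial that occurs in no other $a_j\theta$, so $a_4a_5a_6a_7$ is not in the span.
\end{itemize}
Without this, or an equivalent machine computation, Step 3 is unproved.

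\textbf{Two smaller points.} In Step 2, the claim that $g^2$ always carries a $z_1$ factor is false: $g=x_1x_2x_3$ has $g^2=z_2z_3$. The correct bookkeeping runs as follows. The condition $g^2\in[\pi,\pi]$ forces $g=x_1^{m_1}x_2^{m_2}x_3^{m_3}v$ with $v\in V$, so $g^2=z_1^{m_1+m_2m_3}z_2^{m_2}z_3^{m_3}$. Every $[h,g]$ lies in $\langle z_1\rangle$, so $g\sim g^{-1}$ forces $g\in V$. Also, $\langle x_1\rangle\cap N=\langle z_1\rangle$, so $N$ is not a direct-factor complement.
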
      
\begin{proof}
We start by recording a GAP computation, showing that
$$H^1(\Wh'(\hat{\bZ}_2\pi))\cong \frac{\{ g\in \pi_{\textup{\ab}}  \vv
      g^2=1 \}}{\langle g \vv g\sim g^{-1}\rangle }\cong \frac{\langle
      [x_1],[x_2],[x_3],[z_4],[z_5],[z_6],[z_7] \rangle }{
      \langle [z_4],[z_5],[z_6],[z_7] \rangle}\cong (\cy 2 )^{3}$$

The Lyndon-Hochschild-Serre spectral sequence for the
group extension
$ V \overset{ i }{ \rightarrowtail  } \pi  \overset{ \alpha }{
      \twoheadrightarrow } W $
implies that
\begin{equation*}
      E^{0,1}_{2}(\alpha,\cy 2)=H^{1}(V ;\cy 2)=V^*=\Hom(V,\cy 2)=\langle
      \zeta_1, \zeta_2, \zeta_3, \zeta_4, \zeta_5, \zeta_6, \zeta_7\rangle
\end{equation*}
where $\zeta_i=z_i^*$ for $i\in \{1,2,3,4,5,6,7\}$.  Considering $$H^1(W
;\cy
2)=W^*=\langle X_1,X_2,X_3,X_4,X_5,X_6,X_7\rangle $$ where $X_i=[x_i]^*$
for $i\in
\{1,2,3,4,5,6,7\}$. We have
\begin{equation*}
      \begin{aligned}
              d_2(\zeta_1)  & = X_1^2 + X_2X_3 \\
              d_2(\zeta_2)  & = X_2^2 + X_4X_5 \\
              d_2(\zeta_3)  & = X_3^2 + X_6X_7 \\
          d_2(\zeta_4)  & = X_4^2 \\
              d_2(\zeta_5)  & = X_5^2 \\
          d_2(\zeta_6)  & = X_6^2 \\
          d_2(\zeta_7)  & = X_7^2 \\
      \end{aligned}
\end{equation*}
Let $I_2$ be the ideal generated by the image of $d_2$. Then we have
\begin{equation*}
      \begin{aligned}
              d_3(\zeta^2_1)  & = X_2^2X_3 + X_2X_3^2 +I_2\\
              d_3(\zeta^2_i)  & = 0 +I_2\\
      \end{aligned}
\end{equation*}
for all $i$ in $\{2,3,4,5,6,7\}$.
Let $I$ be the kernel of the edge homomorphism from  $H^*(W ;\cy
2)=E^{*,0}_{2}(\alpha,\cy 2)$ to  $H^*(\pi ;\cy 2)$.  By Lemma \ref{lem:fivefive} below,  it follows that  $X_1^4$ is not in
$I$. We have $$X_2^4=(d_2(\zeta_2))^2 + X_5^2d_2(\zeta_4) $$
$$X_3^4=(d_2(\zeta_3))^2 + X_6^2d_2(\zeta_7) $$
Hence $X_i^4$ is in $I$ for all $i$ in $\{2,3,4,5,6,7\}$.

\medskip
As in Section \ref{subsec:fivetwo}, 
let $N=\langle z_1, x_2, x_3, x_4, x_5, x_6, x_7\rangle$, and let $p \colon \pi \to \pi/N$ denote the quotient map. 
Since $d_2$ is injective, $H^1(W; \cy 2) \xrightarrow{\ \alpha^*\ } H^1(\pi; \cy 2)$ is an isomorphism, and  there is a unique class $\bar X^1 \in H^1(\pi; \cy 2)$ such that $p^*(\bar X_1) = \alpha^* (X_1) \in H^1(\pi; \cy 2)$.  We let $x:= \bar X_1^4 \in H^4(\pi/N; \cy 2)$ for use in  Lemma \ref{lem:suff_cond}. The following result checks that $X_1^4$ is not in the ideal $I$, and hence $0 \neq p^*(x) \in H^4(\pi; \cy 2)$, implying that $\delta \circ
\beta\circ s_* (\wx )\neq 0$ for some $\wx \in H_4(\pi; \cy 2)$.  \end{proof}

\begin{lemma}\label{lem:fivefive}
Let 
$$\alpha =X_1^2 + X_2X_3,$$ 
$$\beta =X_2^2 + X_4X_5,$$ 
$$\gamma = X_3^2 + X_6X_7,$$
$$\theta = X_2^2X_3+X_2X_3^2 $$
be  in $R=\mathbb{F}_2[X_1,X_2,\dots,X_7]$. Then $X_1^4$ is not in the ideal generated by $\alpha $, $\beta $, $\gamma $, $\theta$, $X_4^2$, $X_5^2$, $X_6^2$, $X_7^2$, $X_4^2X_5+X_4X_5^2$, $X_6^2X_7+X_6X_7^2$.
\end{lemma}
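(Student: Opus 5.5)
The plan is to pass to the quotient ring step by step and reduce the question to a finite-dimensional linear algebra check in a graded $\mathbb{F}_2$-algebra. Let $J \subseteq R$ be the ideal in the statement. First I will simplify the generating set. In $R/(X_4^2,X_5^2,X_6^2,X_7^2)$ the generators $X_4^2X_5+X_4X_5^2$ and $X_6^2X_7+X_6X_7^2$ are already zero, so they can be dropped. Set
$$C=\mathbb{F}_2[X_4,X_5,X_6,X_7]/(X_4^2,X_5^2,X_6^2,X_7^2).$$
This is a graded algebra with basis the $16$ square-free monomials, and its top class $X_4X_5X_6X_7$ in degree $4$ is nonzero.

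Next I will eliminate the variables one at a time, using the fact that each of $\alpha,\beta,\gamma$ is monic quadratic in a new variable. Let
$$B' = C[X_2,X_3]/(X_2^2+X_4X_5,\ X_3^2+X_6X_7).$$
This is a free $C$-module with basis $1,X_2,X_3,X_2X_3$. Then
$$R/J \cong \bigl(B'/(\theta)\bigr)[X_1]/(X_1^2+X_2X_3),$$
and this is free over $B:=B'/(\theta)$ on the basis $1,X_1$. In $R/J$ we have $X_1^4=(X_2X_3)^2=X_2^2X_3^2=X_4X_5X_6X_7$. So the lemma reduces to the claim that $X_4X_5X_6X_7 \notin \theta B'$.

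To prove this claim, I note that in $B'$ one has $\theta = X_4X_5\,X_3 + X_6X_7\,X_2$. I will write a general element as $c=c_0+c_2X_2+c_3X_3+c_{23}X_2X_3$ with $c_i\in C$ and expand $\theta c$ in the basis $1,X_2,X_3,X_2X_3$. The $X_2X_3$-coefficient is $c_2X_4X_5+c_3X_6X_7$. The constant coefficient is $(c_2+c_3)X_4X_5X_6X_7$.

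Suppose $\theta c = X_4X_5X_6X_7$. Then the $X_2X_3$-coefficient vanishes, so $c_2X_4X_5=c_3X_6X_7$ in $C$. The constant coefficient equals the top class, which forces $\epsilon_2+\epsilon_3=1$, where $\epsilon_i$ denotes the degree-$0$ part of $c_i$. Comparing degree-$2$ components of $c_2X_4X_5=c_3X_6X_7$ gives $\epsilon_2X_4X_5=\epsilon_3X_6X_7$, hence $\epsilon_2=\epsilon_3=0$. This is a contradiction, so $X_1^4\notin J$.

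The step that needs the most care is the reduction itself. I must check that $\theta$ has no $X_1$-dependence, so that the quotient by $\alpha$ really is free of rank $2$ over $B$. I must also check that the identifications $X_2^2=X_4X_5$ and $X_3^2=X_6X_7$ are used consistently, so that $B'$ is free over $C$. The remaining checks are that no other coefficient contributes to the constant term and that the comparison is degree-by-degree; this is straightforward because all relations are homogeneous.
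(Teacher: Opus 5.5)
Your proof is correct, and it takes a different route from the paper.

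The paper works directly in $R$. It assumes $X_1^4=f\alpha+g\beta+h\gamma+k\theta+z$ and introduces sixteen undetermined $\mathbb{F}_2$-coefficients for selected monomials in $f,g,h,k$. It then compares the coefficients of fourteen chosen degree-$4$ monomials and reaches $0=1$. None of those monomials is divisible by $X_i^2$ for $i\ge 4$, so $z$ does not contribute.

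You instead use that $\alpha,\beta,\gamma$ are monic in $X_1,X_2,X_3$ to get normal forms. $R/J$ is free over $B=B'/\theta B'$ on $1,X_1$, and $B'$ is free over $C$ on $1,X_2,X_3,X_2X_3$. So $X_1^4$ reduces to the top class $X_4X_5X_6X_7$ of $C$, and the question becomes whether this class lies in $\theta B'$. Your two coefficients of $\theta c$ are right: the constant term $(c_2+c_3)X_4X_5X_6X_7$ and the $X_2X_3$-term $c_2X_4X_5+c_3X_6X_7$. Together they give the contradiction $\epsilon_2+\epsilon_3=1$ versus $\epsilon_2=\epsilon_3=0$.

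The paper's version is fully elementary and needs no structural facts. However, its choice of monomials is unexplained, and each of the fourteen equations must be checked by hand. Yours is shorter to verify and explains why the lemma holds: $X_1^4$ is forced to equal the top class $X_4X_5X_6X_7$, and $\theta$ cannot produce that class. Your reduction is also a normal-form procedure, so other membership questions in this ideal become routine in the same way.
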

\begin{proof}
Suppose otherwise. Then
\eqncount
\begin{equation} \label{eq:main_in_lem}
X_1^4=f\alpha +g\beta +h\gamma + k\theta + z
\end{equation}
for some $f$, $g$, $h$, $k$ in $R$ and for some $z$ in the ideal generated by the elements $X_4^2$, $X_5^2$, $X_6^2$, $X_7^2$, $X_4^2X_5+X_4X_5^2$, $X_6^2X_7+X_6X_7^2$. Assume that $m_1$, $m_2$, ..., $m_k$ are distinct monomials in $X_1$, $X_2$,  ..., $X_7$. In the rest of the proof, when we write 
$$w=c_1m_1+c_2m_2+\cdots +c_km_k + \bigstar $$ 
it will mean that 
$$w=c_1m_1+c_2m_2+\cdots +c_km_k +  \widetilde{w}$$ for some $\widetilde{w}$ which has no $m_1$, $m_2$, ... $m_k$ terms. For the rest of the proof, the coefficients $\{ c_i\}$ are elements in $\mathbb{F}_2$. Assume that 
$$f=c_1X_1^2+c_2X_2X_3+ c_3X_2^2 +c_4X_3^2+c_5X_4X_5+c_6X_6X_7+ \bigstar ,$$
and
$$g=c_7X_1^2+c_8X_3^2+c_9X_2X_3+c_{10}X_6X_7+\bigstar$$
and
$$h=c_{11}X_1^2+c_{12}X_2^2+c_{13}X_2X_3+c_{14}X_4X_5+\bigstar$$
and
$$k=c_{15}X_2+c_{16}X_3+\bigstar$$
Let  $m$ be a monomial in  $X_1$, $X_2$,  ..., $X_7$. We will write $(m)$ to denote the equation on $c_i$'s that we get by considering the coefficient of the monomial $m$ in the  Equation \ref{eq:main_in_lem}.
$$
\begin{array}{rcll}
1 & = & c_1 & (\,  X_1^4  \,) \\[0.8ex]
0 & = & c_2+c_8+c_{12}+c_{15}+c_{16} & (\,  X_2^2X_3^2  \,) \\[0.8ex]
0 & = & c_3+c_9+c_{15} & (\,  X_2^3X_3  \,) \\[0.8ex]
0 & = & c_4+c_{13}+c_{16} & (\,  X_2X_3^3  \,) \\[0.8ex]
0 & = & c_5+c_7 & (\,  X_1^2X_4X_5  \,) \\[0.8ex]
0 & = & c_5+c_9 & (\,  X_2X_3X_4X_5  \,) \\[0.8ex]
0 & = & c_1+c_2 & (\,  X_1^2X_2X_3  \,) \\[0.8ex]
0 & = & c_8+c_{14} & (\,  X_3^2X_4X_5  \,) \\[0.8ex]
0 & = & c_3+c_7 & (\,  X_1^2X_2^2  \,) \\[0.8ex]
0 & = & c_4+c_{11} & (\,  X_1^2X_3^2  \,) \\[0.8ex]
0 & = & c_6+c_{11} & (\,  X_1^2X_6X_7  \,) \\[0.8ex]
0 & = & c_6+c_{13} & (\,  X_2X_3X_6X_7  \,) \\[0.8ex]
0 & = & c_{10}+c_{12} & (\,  X_2^2X_6X_7  \,) \\[0.8ex]
0 & = & c_{10}+c_{14} & (\,  X_4X_5X_6X_7  \,) 
\end{array} 
$$
By the last 10 equations above we have  $c_1=c_2$ and $c_3=c_5=c_7=c_9$ and $c_4=c_6=c_{11}=c_{13}$ and $c_8=c_{10}=c_{12}=c_{14}$. By the first equation we get $c_1=c_2=1$. Now by the second equation we get  $0=1+c_{15}+c_{16}$ because $c_8=c_{12}$. By third equation we get $c_{15}=0$ because $c_3=c_9$. By fourth equation we get $c_{16}=0$ because $c_4=c_{13}$.
Hence $0=1+c_{15}+c_{16}=1+0+0=1$. This is a contradiction.   
\end{proof}

\begin{remark} By GAP computations we have verified that $\kappa'_4(\pi) = 0$, for all elementary by elementary $2$-groups of order $\leq 256$, and for about a third of such groups of order $512$.
\end{remark}

\subsection{An example with $\beta \circ s_*\neq 0$ and $\delta \circ
\beta\neq 0$ } 
Here we give an example to show that both $\beta \circ s_*$ and $\delta
\circ \beta$ could be nonzero, but the composite $\delta
\circ \beta\circ s_* = 0$. Such examples illustrate the difficulty of finding groups with $\kappa'_4(\pi) \neq 0$.
The spectral sequence calculations follow the same methods as above

\medskip
Let $\pi$ be the group of order $256$, identified as  $SG[256,9039]$ in the Small Groups Library. It is
given by the following presentation
\begin{equation*}
      \pi= \left\langle
      x_1,x_2,x_3,x_4,x_5,x_6,x_7,x_8 \vv r\in R \right\rangle
\end{equation*}
where $R$ contains the relations
$x_1^2 = x_5 x_7 x_8$,
$x_2^2 = x_6$,
$x_3^2 = x_5x_6$,
$x_4^2 = x_5x_6$,
$x_5^2=x_6^2=x_7^2=x_8^2=1$,
$[x_1,x_2] = x_5$,
$[x_1,x_3] = x_6$,
$[x_1,x_4]= x_8$,
$[x_2,x_3]= x_7$,
$[x_2,x_4]= x_5$,
$[x_3,x_4] = 1$, and
$[x_i,x_5]=[x_i,x_6]=[x_i,x_7]=[x_i,x_8]$ for every $i\in
\{1,2,3,4,5,6,7\}$.
Notice that we have a central group extension  $ V \overset{ i }{
\rightarrowtail  } \pi  \overset{ \alpha }{
      \twoheadrightarrow } W   $ where
\begin{equation*}
      V= Z(\pi) = [\pi,\pi]=\left\langle x_5,x_6,x_7,x_8 \right\rangle
      \cong \cy 2 \times \cy 2 \times \cy 2 \times \cy 2
\end{equation*}
and
\begin{equation*}
      W= \pi^{\ab}=\pi/[\pi,\pi]=\left\langle [x_1],[x_2],[x_3],[x_4]
\right\rangle
      \cong \cy 2 \times \cy 2 \times \cy 2 \times \cy 2.
\end{equation*}
Also note that the GAP code in Appendix \ref{sec:appone} shows that
$$H^1(\Wh'(\hat{\bZ}_2\pi))\cong \frac{\{ g\in \pi^{\ab}  \vv
g^2=1 \}}{\langle g \vv g\sim g^{-1}\rangle }\cong \frac{\langle
[x_1],[x_2],[x_3],[x_4] \rangle }{
\langle[x_1]+[x_2],[x_2]+[x_3],[x_3]+[x_4]\rangle}\cong \cy 2. $$
By considering the Lyndon-Hochschild-Serre spectral sequence for the
group extension
$ V \overset{ i }{ \rightarrowtail  } \pi  \overset{ \alpha }{
\twoheadrightarrow } W $
we get
\begin{equation*}
      E^{0,1}_{2}(\alpha,\cy 2)=H^{1}(V ;\cy 2)=V^*=\Hom(V,\cy 2)=\langle
\zeta_5,\zeta_6,\zeta_7,\zeta_8\rangle
\end{equation*}
where $\zeta_i=x_i^*$ for $i\in \{5,6,7,8\}$. Considering $$H^1(W ;\cy
2)=W^*=\langle X_1,X_2,X_3,X_4\rangle $$ where $X_i=[x_i]^*$ for $i\in
\{1,2,3,4\}$. We use a GAP program to obtain the following results: we have
\begin{equation*}
      \begin{aligned}
              d_2(\zeta_5)  & = X_1^2 + X_1X_2 +  X_2X_4 + X_3^2 + X_4^2\\
              d_2(\zeta_6)  & = X_1X_3 + X_2^2 + X_3^2 + X_4^2 \\
              d_2(\zeta_7)  & = X_1^2 + X_2X_3 \\
              d_2(\zeta_8)  & = X_1^2 +  + X_1X_4  .
      \end{aligned}
\end{equation*}
Let $I_2$ be the image of $d_2$.  Then we have
\begin{equation*}
      \begin{aligned}
              d_3(\zeta^2_5)  & = X_1^2X_2 +X_1X_2^2 + X_2^2X_4 + X_2X_4^2 +I_2\\
              d_3(\zeta^2_6)  & =  X_1^2X_3 + X_1X_3^2 +I_2\\
              d_3(\zeta^2_7)  & =  X_2^2X_3 + X_2X_3^2 +I_2\\
              d_3(\zeta^2_8)  & = X_1^2X_4 + X_1X_4^2 +I_2
      \end{aligned}
\end{equation*}
Let $I$ be the kernel of the edge homomorphism from  $H^*(W ;\cy 2)=E^{*,0}_{2}(\alpha,\cy 2)$ to  $H^*(\pi ;\cy 2)$.  Then it is
straight forward to check $X^4_2+X^4_3$ and $X^4_3+X^4_4$ are not in
$I$. It is also  straight forward to check that $X^4_1$ and
$X^4_2+X^4_3+X^4_4$  are in $I$. Hence the image of $s_*$  in $H_4(\pi; \cy 2)$ is generated
by the duals of the cohomology classes  $X^2_2+X^2_3+I$  and $X^2_3+X^2_4+I$. 

 It follows that the image of  $\beta \circ
s_*$ is generated by $[x_2]+[x_3]$ and $[x_3]+[x_4]$, and hence nonzero
in   $H^0(\pi^{ab})$. Moreover $\delta \circ \beta([x_i])\neq 0$ for all
$i$ in $\{1,2,3,4\}$, implying that $\delta\circ \beta\circ s_* = 0$.

\section{More information about  $\kappa_4(\pi)$}\label{sec:six}
In Theorem E we have provided an explicit algebraic condition to check for non-vanishing examples for which $\kappa_4'(\pi) \neq 0$, and Corollary \ref{cor:ooze} provides one such example,  in which the group $\pi$ is elementary abelian by elementary abelian. We first present a conjecture for the vanishing of $\kappa_4'(\pi ) $ in case $\pi$ does not satisfy this  assumption. In the last sections we show how to obtain further non-vanishing $\kappa^s_4$ examples (see Proposition \ref{prop:compat_pair}, Proposition \ref{prop:kappas_ex2}, and  and Remark \ref{rem:moreex}).

\subsection{A conjecture about  $\kappa_4'(\pi) $.}
Note that if $\kappa_4'(\pi) \neq 0$, then by Theorem E the composite
$\delta\circ\beta\circ s_* \neq 0$ for the group $\pi$. 
In particular, if $\delta\circ\beta\circ s_*(\wx )\neq 0$, for some $\wx  \in H_4(\pi;\cy 2)$,  then by Lemma \ref{lem:fivetwo} there must exist a cyclic quotient  $p\colon \pi \to \pi/N = C_1$ such that $0 \neq p_*(\wx ) \in H_4(C_1;\cy 2)$. The following conjecture is motivated by trying to show that  $C_1$ must have order two if
$\kappa_4'(\wx ) \neq 0$.

\begin{question} If $\kappa'_4(\pi) \neq 0$, does it follow that $\pi^{\ab}$ is elementary abelian ?
\end{question}

If $\pi^\ab$ has cyclic factors of orders $\geq 4$ in an internal direct sum decomposition, then we can use an filtration approach to study the map 
$\delta\circ\beta\circ s_*$.

\begin{conjecture}\label{conj:subquotients} Let $N \triangleleft T \trianglelefteq W \triangleleft \pi$ be an increasing sequence of normal subgroups of a $2$-group $\pi$ such that  $\pi/N$ is a cyclic group, $T/N\cong \cy 2$ and $\pi/W\cong \cy 2$. Let $i\colon W\rightarrow \pi $ be the inclusion and $p\colon \pi\rightarrow \pi/N$ be the natural quotient map. 
Assume that  the maps
\begin{itemize} \setlength{\itemsep}{0pt plus 8pt}  
\item $p_*\colon H_4(\pi;\cy 2) \rightarrow  H_4(\pi/N;\cy 2)\cong \cy 2$, and 
\item $(p\circ i)_*\colon H_2(W;\cy 2) \rightarrow  H_2(\pi/N;\cy 2)\cong \cy 2$ 
\end{itemize}
 are both  non-zero, but $(p\circ i)_*\colon H_4(W;\cy 2) \rightarrow  H_4(\pi/N;\cy 2)\cong \cy 2$ is zero. Then the number of $\pi$-conjugacy classes of elements in $T-N$ is odd.
\end{conjecture}

 \begin{remark} In Appendix \ref{app:appthree} we provide some GAP code to test this conjecture. The smallest groups that contains such an increasing sequence are $SG[32,8]$, $SG[64,13]$ and $SG[64,14]$. In these groups, each such increasing sequence of subgroups $T-N$ has an odd number of $\pi$-conjugacy classes, verifying the conjecture.  
 \end{remark}

Recall the formula:
$$H^1(\Wh'(\hat{\bZ}_2\pi))\cong \frac{\{ [g]\in \pi^{\ab}  :
[g]^2=1 \}}{\langle [g] : g\sim g^{-1}\rangle }\cong S/C,$$
where $S=\{ g\in \pi  \vv
g^2\in[\pi,\pi] \}$  and $C=\langle g\in S \vv g\sim g^{-1}\text{ or } g\in
[\pi,\pi]\rangle$. Now let $$T=\langle g\in S  \vv g\in
C, \text{ or } 
t^2=g \text{ for some \ } t\in \pi\rangle,$$  and notice that $T/C$ is a subgroup of $H^1(\Wh'(\hat{\bZ}_2\pi))$. For example, when $\pi=SG(32,4)$ we have $S=T$. 
 
\begin{theorem} Let $\pi$ be a finite $2$-group.
Assume that Conjecture \textup{\ref{conj:subquotients}} is true and $\kappa'_4(H)=0$,  for every proper subgroup $H$ of $\pi$. Then the intersection of the image of $\kappa'_4(\pi)$ with $T/C$ is trivial. 
\end{theorem}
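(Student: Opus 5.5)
We read the statement via Theorem E and Proposition \ref{prop:kappafourprime}: the image of $\kappa'_4(\pi)$ in $L'_2(\Zhat\pi)$ is $\bd(\Image \lambda_4(\pi))$ with $\bd$ injective. So we identify $H^1(\Wh'(\Zhat\pi))$ with $S/C$ and must show that $\Image(\delta\circ\beta\circ s_*)\cap T/C = 0$. The argument is by contradiction. Suppose $\wx\in H_4(\pi;\cy 2)$ has $0\neq \delta\beta s_*(\wx) \in T/C$. We then construct a chain $N\triangleleft T'\trianglelefteq W\triangleleft\pi$ satisfying the hypotheses of Conjecture \ref{conj:subquotients}, and derive a contradiction from the parity conclusion together with $\kappa'_4(W)=0$.

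\emph{Step 1: the cyclic quotient.} Choose an adapted decomposition of $\pi^{\ab}$ (Lemma \ref{lem:cyclic_basis}) whose first vector $v_1$ satisfies $\delta(v_1)=\delta\beta s_*(\wx)\in T/C$. This should be possible by performing the row/column reduction with a basis of $W$ extending this element. Lemma \ref{lem:fivetwo} and diagram \eqref{eq:fivesix} then give the projection $p\colon\pi\to \pi/N=C_1$ with $p_*(\wx)\neq0$ in $H_4(C_1;\cy2)$. Because $\delta(v_1)$ lies in $T/C$ but not in $C$, the element $v_1$ is represented by a square $t^2$ that is not conjugate to its inverse. We take $T'=p^{-1}(\langle v_1\rangle)$, so $T'/N\cong\cy2$, and $W=p^{-1}(2C_1)$, the index-two subgroup. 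Then $T'\le W$ exactly when $|C_1|\ge4$.

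\emph{Step 2: the case $|C_1|=2$.} Here $v_1=[g_1]$ itself, yet $v_1$ must be a square modulo $C$. We expect this to force $\delta(v_1)\in C$, contradicting $\delta(v_1)\neq0$. This is exactly how $T/C$ is designed to be used.

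\emph{Step 3: verifying the hypotheses for $|C_1|\ge4$.} The first hypothesis, $p_*\neq0$ on $H_4$, is Step 1. For the second, we need $(p\circ i)_*\colon H_2(W;\cy2)\to H_2(\pi/N;\cy2)$ to be nonzero. We will obtain this from $v_1=t^2$: the class $t$ maps to a generator of $C_1$, and the image of $s_*(\wx)$ under $p_*$ is nonzero. The plan is to use the transfer $H_2(\pi)\to H_2(W)$ and the fact that for the index-two inclusion $2C_1\subset C_1$ the map on $H_2(-;\cy2)$ is nonzero precisely when $|C_1|\ge4$. For the third hypothesis, suppose $(p\circ i)_*$ were nonzero on $H_4(W;\cy2)$. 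Then Lemma \ref{lem:suff_cond} applied to $W$, with the same detection argument, would give $\kappa'_4(W)\neq0$. By Theorem E this contradicts the assumption that $\kappa'_4$ vanishes on proper subgroups, so the map is zero.

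\emph{Step 4: parity contradiction.} The conjecture now says that the number of $\pi$-conjugacy classes in $T'-N$ is odd. Elements of $T'-N$ all map to $v_1$. Inversion acts on these conjugacy classes, and since no element of $T'-N$ is conjugate to its inverse, it acts freely. This requires checking that inverses stay in $T'-N$, which holds because $v_1$ has order two. A free involution on a finite set forces an even count, which is the desired contradiction.

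We expect Step 3 to be the main obstacle. Transporting $\delta(v_1)\neq0$ and the hypothesis $\kappa'_4(W)=0$ into the nonvanishing on $H_2(W)$ and the vanishing on $H_4(W)$ requires care, since $\delta_W$ and the adapted decomposition for $W^{\ab}$ differ from those for $\pi$. We would first try restriction/transfer compatibility of $\beta$, $s_*$ and $\delta$ along $W\subset\pi$.
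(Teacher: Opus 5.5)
Your outline matches the paper's proof: an adapted decomposition with $\delta(v_1)\in T/C$, the chain $N\triangleleft T'\trianglelefteq W\triangleleft\pi$, a check of the three hypotheses of Conjecture~\ref{conj:subquotients}, and a free inversion action on the $\pi$-conjugacy classes in $T'-N$. The gap is the second hypothesis: the transfer plan in Step~3 cannot work. Since $N\le W$ and $[\pi:W]=[\pi/N:W/N]$, the transfer is natural for $(\pi,W)\to(\pi/N,W/N)$. With $j\colon W/N\to\pi/N$ the inclusion, this gives $(p\circ i)_*\circ\mathrm{tr}^{\pi}_{W}=j_*\circ\mathrm{tr}^{\pi/N}_{W/N}\circ p_*=2\,p_*=0$ on $H_2(-;\cy2)$. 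So every class you produce by transfer dies in $H_2(\pi/N;\cy2)$, whatever $s_*(\wx)$ is. (Also, $t$ maps to an element of order $4$ in $C_1$, which is a generator only if $|C_1|=4$.) The paper instead pushes forward from the subgroup $T'$ (its $T$). It asserts that $H_2(T';\cy2)\to H_2(\pi;\cy2)\to H^0(\pi^{\ab})\to H^0(\pi/N)$ is onto, that $H_2(T')\to H_2(\pi)$ factors through $H_2(W)$, and that $H_2(T'/N)\cong H_2(W/N)\cong H_2(\pi/N)$.

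A self-contained way to get the nonvanishing uses the real input, $p_*(\wx)\neq0$ in degree $4$, through the Gysin sequence of the index-two subgroup $W$. Write $H^*(C_1;\cy2)=\Lambda(x)\otimes\bF_2[y]$; then $x^2=0$ since $|C_1|\ge4$, and $y^2$ spans $H^4$. Let $\chi=p^*(x)$, the class of $\pi\to\pi/W$. The sequence $H^1(\pi;\cy2)\xrightarrow{\cup\chi}H^2(\pi;\cy2)\xrightarrow{i^*}H^2(W;\cy2)$ is exact. So if $(p\circ i)^*(y)=0$, then $p^*(y)=\chi\cup u$ for some $u$, hence $p^*(y^2)=p^*(x^2)\,u^2=0$, contradicting $p_*(\wx)\neq0$. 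Dually, $(p\circ i)_*\neq0$ on $H_2(W;\cy2)$.

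The remaining steps can be closed without the adapted decomposition of $W^{\ab}$ that worries you. The key fact, which your Step~4 also uses without proof, is that any $g\in\pi$ conjugate to its inverse lies in $N$: its class lies in $K=\la v_{k+1},\dots,v_n\ra$, which has zero $C_1$-component. For the third hypothesis, suppose $\wx'\in H_4(W;\cy2)$ had $(p\circ i)_*(\wx')\neq0$. Since $\beta\circ s_*$ is an isomorphism for the cyclic group $W/N$, the class $\beta_W s_*(\wx')\in H^0(W^{\ab})$ maps to $v_1\neq0$ in $W/N$. But $\ker\delta_W$ is generated by classes of elements $W$-conjugate, hence $\pi$-conjugate, to their inverses, so it maps to $0$ in $W/N$. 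Thus $\lambda_4(W)\neq0$, contradicting $\kappa'_4(W)=0$ by Theorem~E.

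Step~2 is settled by the identity $T/C=\delta\bigl(H^0(\pi^{\ab})\cap2\pi^{\ab}\bigr)$. In an adapted decomposition this is spanned by the $\delta(v_i)$ with $i\le k$ and $|C_i|\ge4$. Linear independence of $\delta(v_1),\dots,\delta(v_k)$ then lets you reorder so that $p_*\beta s_*(\wx)\neq0$ and $\delta(v_1)\in T/C$, and it forces $|C_1|\ge4$.
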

   \begin{proof}
      Suppose otherwise.  Then there exists an element $\wx \in H_4(\pi; \cy 2)$, and  an adapted decomposition of $\pi^\ab$ such that  $0 \neq p_*(\beta\circ s_*(\wx ))$ under the first factor projection $p\colon  \pi \to C_1$,  for which $\delta (v_1)\neq 0$ is in $T/C$. Let $N$ be as in Section \ref{subsec:fivetwo}. Since $v_1$ is in $T$, $\pi/N$ is cyclic group of order at least $4$. In fact there exists  an increasing sequence  $N \triangleleft T \trianglelefteq W \triangleleft \pi$  of normal subgroups of the $2$-group $\pi$. Notice that we have $p_*\colon H_4(\pi;\cy 2) \rightarrow  H_4(\pi/N;\cy 2)\cong \cy 2$ non-zero by Lemma \ref{lem:fivetwo} because  $\delta (v_1)\neq 0$. 
      
      Moreover, $(p\circ i)_*\colon H_2(W;\cy 2) \rightarrow  H_2(\pi/N;\cy 2)\cong \cy 2$  is non-zero, since the composite $$H_2(T;\cy 2) \to H_2(\pi;\cy 2) \to H^0(\pi^{\ab}) \to H^0(\pi/N)$$
       is surjective, and $H^0(T/N) \cong H^0(\pi/N)$. This uses the fact that $H_2(T;\cy 2) \to H_2(\pi;\cy 2)$ factors through the map $ H_2(W;\cy 2)  \to H_2(\pi;\cy 2)$, and the isomorphisms
$$H_2(T/N;\cy 2) \cong H_2(W/N;\cy 2) \cong H_2(\pi/N;\cy 2)$$ induced by the inclusions.
      
      But $(p\circ i)_*\colon H_4(W;\cy 2) \rightarrow  H_4(\pi/N;\cy 2)\cong \cy 2$ is zero because $\kappa'_4(W)=0$.  Then the number of $\pi$-conjugacy classes of elements in $T-N$ is odd. Consider the action on the set $\pi$-conjugacy classes of elements in $T-N$ given by taking inverses. This action must have a fixed point and hence contradicting $\delta (v_1)\neq 0$.   
         \end{proof}

\begin{corollary} Assume that Conjecture \textup{\ref{conj:subquotients}} is true, and that $\pi^{\ab}$ has no direct factors of order two. Then $\kappa'_4(\pi) = 0$.
\end{corollary}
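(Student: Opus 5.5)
The plan is to derive the Corollary from the preceding Theorem, applying it inductively over subgroups. The Theorem says that if Conjecture \ref{conj:subquotients} holds and $\kappa'_4$ vanishes on every proper subgroup, then the image of $\kappa'_4(\pi)$ meets $T/C$ trivially. I will argue by induction on $|\pi|$, but the hypothesis ``$\pi^{\ab}$ has no direct factors of order two'' is not inherited by subgroups. So instead I will take a minimal counterexample: a subgroup $H\le \pi$ of minimal order with $\kappa'_4(H)\neq 0$. Then every proper subgroup of $H$ has $\kappa'_4=0$, and the Theorem applies to $H$.

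To make this work I will replace the hypothesis on $\pi^{\ab}$ with something more robust. By Theorem E and Lemma \ref{lem:fivetwo}, $\kappa'_4(\pi)\neq 0$ yields $\wx$ and an adapted decomposition with first factor projection $p\colon\pi\to C_1$ such that $p_*(\beta s_*\wx)\neq 0$ and $\delta(v_1)\neq 0$. If every cyclic factor has order $\ge 4$, then $v_1=[g_1]^{r}$ with $r\ge 2$ even. Hence $v_1=[g_1^{r/2}]^2$ is a square, so $v_1\in T$ by definition of $T$, and $\delta(v_1)\in T/C$. By Proposition \ref{prop:kappafourprime}, $\bd\circ\delta\circ\beta\circ s_*=\rho_2\circ\kappa'_4$ with $\bd$ injective. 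So $\rho_2\kappa'_4(\wx)$ corresponds to $\lambda_4(\wx)$.

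In fact the stronger statement holds: every element of $H^0(\pi^{\ab})$ is a square of a class in $\pi^{\ab}$ when there are no $\cy 2$ factors. So $T/C=S/C=H^1(\Wh'(\Zhat\pi))$, and the image of $\lambda_4(\pi)$ lies in $T/C$. The Theorem then forces the image of $\kappa'_4(\pi)$, read in $H^1(\Wh'(\Zhat\pi))$ via $\rho_2$, to be zero. Lemma \ref{lem:threethree} and the argument in the proof of Theorem E reduce detection to $\rho_2$ together with the abelianization. The abelianization contributes nothing, since $\kappa'_4(\pi^{\ab})$ vanishes by Theorem A(iv) (or directly, as $H^1(\Wh'(\Zhat\pi^{\ab}))=0$ and $\rho_2$ is injective for abelian groups). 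Therefore $\kappa'_4(\pi)=0$, provided the Theorem's hypothesis on proper subgroups holds.

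The main obstacle is exactly that hypothesis: proper subgroups $H<\pi$ may have $\cy 2$ factors in $H^{\ab}$, so induction on the Corollary itself fails. My first attempt is to examine the proof of the Theorem, which uses the subgroup hypothesis only for the specific index-two subgroup $W$ containing $T\supseteq N$. I would check whether $W$ inherits enough structure, namely that the relevant class $v_1$ for $W$ is still a square. If not, I would fall back on reading the Corollary as intended under the inductive hypothesis on proper subgroups. I would then verify that for a minimal-order counterexample $\pi$, the class realizing $\kappa'_4\neq0$ has $C_1$ of order $\ge4$, which the hypothesis on $\pi^{\ab}$ guarantees. This reaches the contradiction with the Theorem.
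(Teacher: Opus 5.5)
Your key observation is correct, and it is the evident link to the preceding Theorem; the paper states this Corollary without a separate proof. If $\pi^{\ab}$ has no $\cy 2$ summands, every element of order $\le 2$ in $\pi^{\ab}$ is a square. Hence $T=S$ and $T/C=H^1(\Wh'(\Zhat\pi))$, the Theorem kills $\lambda_4(\pi)$, and Theorem E gives $\kappa'_4(\pi)=0$.

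\textbf{The gap.} The Theorem has a second hypothesis: $\kappa'_4(H)=0$ for every proper subgroup $H<\pi$. Your proposal never establishes it, so as written it proves only the weaker statement with that hypothesis added; your ``fallback'' is exactly this weaker statement. The minimal-counterexample device does not repair it, in either reading, and your final sentence conflates the two.
\begin{itemize}
\item If you take $H\le\pi$ of minimal order with $\kappa'_4(H)\neq 0$, the Theorem applies to $H$. But $H^{\ab}$ may have $\cy 2$ summands, so $T_H/C_H$ can be a proper subgroup of $H^1(\Wh'(\Zhat H))$, and there is no contradiction.
\item If you take $\pi$ to be a minimal counterexample to the Corollary, its proper subgroups need not satisfy the Corollary's hypothesis. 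So nothing forces their $\kappa'_4$ to vanish.
\end{itemize}

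\textbf{The idea of looking only at $W$.} This is the right place to look. In the Theorem's proof, the subgroup hypothesis is used only to make $(p\circ i)_*\colon H_4(W;\cy 2)\to H_4(\pi/N;\cy 2)$ vanish. But it does not close the argument either. Let $T$ denote the subgroup in the chain $N\triangleleft T\trianglelefteq W\triangleleft\pi$, so $T/N\cong\cy 2$.
\begin{itemize}
\item For an adapted decomposition, $\delta(\wz)\neq 0$ whenever $p_*(\wz)\neq 0$ (proof of Lemma \ref{lem:suff_cond}). Hence no element of $T\setminus N$ is $\pi$-conjugate to its inverse.
\item Consequently, if $H_4(W;\cy 2)\to H_4(W/N;\cy 2)$ is nonzero, the cyclic quotient $W\to W/N$ already witnesses $\lambda_4(W)\neq 0$.
\item You could therefore try rerunning the Theorem's argument with $W$ in place of $\pi$, keeping $N$ and $T$, and invoking the Conjecture at each stage. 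This works only while $|W/N|\ge 4$, which is exactly while your ``still a square'' check succeeds.
\item The descent bottoms out at $T$ itself, with $T/N\cong\cy 2$. There the Conjecture says nothing, and you would need a separate argument that $H_4(T;\cy 2)\to H_4(T/N;\cy 2)$ vanishes.
\end{itemize}
That last case is precisely the order-two situation. The hypothesis on $\pi^{\ab}$ excludes it for $\pi$ but cannot exclude it for subgroups, and it is not vacuous in general (cf.\ Corollary \ref{cor:ooze_example}). Some additional idea is needed at this point, and your proposal does not supply one.
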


\subsection{Codimension two twisting diagrams}\label{subsec:codim_two_ex} Here is an approach to finding an example such that $\kappa^s_4 \neq 0$, but $\kappa'_4 = 0$. We look for a finite $2$-group $\pi$ equipped with certain  properties. For simplicity, we restrict to groups $\pi$  such that $\pi^{\ab}$ has exponent two. Recall from \cite[Theorem 3]{Oliver:1980a} that
there is a natural isomorphism
$$\omega\colon H_2(\pi;\bZ)/H_2^{\ab}(\pi) \xrightarrow{\cong} SK_1(\Zhat\pi)$$
where $H_2^{\ab}(\pi) \subseteq H_2(\pi; \bZ)$ denotes the image under induction from abelian subgroups of $\pi$, and a boundary map $\bd\colon H^1(SK_(\Zhat\pi)) \to L_{2i}(\Zhat\pi)$, for $i = 0,1 \pmod 4$.

\begin{definition}\label{def:kappalist}  Let $\pi$ be a non-abelian finite 2-group and $\theta \in H^2(\pi; \cy 2)$ a cohomology class. We say that the pair $(\pi,  \theta)$ is \emph{compatible} provided that the following conditions hold. 
\begin{enumerate}\setlength{\itemsep}{0pt plus 6pt}  
\item The abelianization $\pi^{\ab}$ has exponent two.
\item A central extension
$1 \to \sigma \to \tilde \pi \to \pi \to 1$,
  classified by  $\theta \in H^2(\pi;\cy 2)$, with $\sigma = \la t\ra\cong \cy 2$, defines a 2-cover $\alpha\colon \tilde \pi \to \pi$. We require that $t \in [\tilde\pi, \tilde\pi]$.
  \item 
$H^1(Wh'(\Zhat\tilde\pi)) = 0$.
\item  $SK_1(\Zhat\pi) \neq 0$, but the map 
  $SK_1(\Zhat\tilde\pi)\to SK_1(\Zhat\pi)$ is zero.
 \item The boundary map $\bd\colon H^1(SK_1(\Zhat\pi)) \to L_{0}(\Zhat\pi)$ is injective.
 \item There exists a class 
 $\wx \in H_4(\pi; \cy 2)$ such that    $0 \neq \wz = \wx \cap \theta \in H_2(\pi;\cy 2)$ is the image $\wz = i_*(\wz_1)$ of an integral class $\wz_1 \in H_2(\pi; \bZ)$.
 \item For the image $[\wz_1] \in H_2(\pi;\bZ)/H_2^{\ab}(\pi)$, we have
 $1 \neq \omega([\wz_1]) \in SK_1(\Zhat\pi)$.
\end{enumerate}
\end{definition}
\begin{remark}\label{rem:cohom_form}
Alternately,  by duality in terms of the cohomology Bockstein, for condition (vi) we need a class $ z \in H^2(\pi; \cy 2)$ such that $\beta^*( z) \neq 0$ and $z \cup \theta \neq 0$.
\end{remark}

\begin{proposition}
\label{prop:compat_pair} If $(\pi, \theta)$ is a compatible pair, then $\kappa^s_4(\pi) \neq 0$ and $\kappa'_4(\pi) = 0$.
\end{proposition}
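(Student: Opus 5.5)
The proof splits into two independent claims: $\kappa'_4(\pi)=0$, and $\kappa^s_4(\pi)\neq 0$.

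\emph{Vanishing of $\kappa'_4(\pi)$.} I would use Theorem E, which reduces this to showing $\lambda_4(\pi)=\delta\circ\beta\circ s_*=0$. By condition (i), $\pi^{\ab}$ has exponent two. Suppose $(\beta\circ s_*)(\wx)\neq 0$ for some $\wx\in H_4(\pi;\cy 2)$. Lemma \ref{lem:fivetwo}(ii) then gives a projection $p\colon\pi\to C_1\cong\cy 2$ with $p_*(\wx)\neq 0$, and $\theta_1=\phi^2$. The aim is to show that $\delta(\beta s_*\wx)=0$. I would try to prove $H^1(\Wh'(\Zhat\pi))$ is small, so that $\delta$ kills the image of $\beta$, by comparing with $\tilde\pi$ via condition (iii). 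Every element $g\in\pi$ with $[g]$ of order two is a candidate. If its lift $\tilde g$ is conjugate to its inverse in $\tilde\pi$, then $g$ is conjugate to $g^{-1}$ in $\pi$. Also $\tilde\pi^{\ab}\to\pi^{\ab}$ is an isomorphism, because $t\in[\tilde\pi,\tilde\pi]$ by (ii). Combining these, the formula for $H^1(\Wh')$ as $\{[g]:[g]^2=1\}/\langle [g]: g\sim g^{-1}\rangle$ shows that $H^1(\Wh'(\Zhat\tilde\pi))=0$ surjects onto $H^1(\Wh'(\Zhat\pi))$. Hence $H^1(\Wh'(\Zhat\pi))=0$, so $\lambda_4(\pi)=0$ and $\kappa'_4(\pi)=0$ by Theorem E. In fact this also gives $\bar\kappa'_*(\pi)=0$ in the relevant degrees.

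\emph{Non-vanishing of $\kappa^s_4(\pi)$.} I would exhibit a specific class: I expect $\kappa^s_4(\wx)\neq 0$ for the $\wx$ of condition (vi). The idea is a ``codimension two'' reduction along the $2$-cover $\alpha$, whose twisting class is $\theta$. Geometrically, the codimension-two Arf problem over $\wx$ should restrict, via the transfer/twisting relation, to the class $\wz=\wx\cap\theta$ in degree two. The formal version is the following identity, to be established by lifting to $R\pi$ as in Proposition \ref{prop:kappafourprime} and using the 2-adic detection theorem:
$$\rho_2\circ\kappa^s_4(\wx)=\bd\bigl(\omega[\wz_1]\bigr)\in L^s_2(\Zhat\pi),$$
possibly up to the image of $\bd$ from $H^1(SK_1(\Zhat\tilde\pi))$.

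Granting this identity, the argument concludes in four steps.
\begin{enumerate}
\item Condition (vii) gives $\omega([\wz_1])\neq 0$ in $SK_1(\Zhat\pi)$.
\item Since the involution acts by $-1$, $H^1(SK_1(\Zhat\pi))\cong SK_1(\Zhat\pi)\otimes\cy 2$. I need the image of $\omega([\wz_1])$ there to be nonzero. This holds provided $SK_1$ has exponent two, or the class is not divisible by two; I would check this using the structure in condition (iv).
\item Condition (v) says $\bd$ is injective, so $\bd(\omega[\wz_1])\neq 0$.
\item Condition (iv) says the contribution from $\tilde\pi$ maps to zero, so it cannot cancel this term.
\end{enumerate}
Therefore $\rho_2\kappa^s_4(\wx)\neq 0$, and in particular $\kappa^s_4(\pi)\neq 0$.

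The main obstacle is establishing the identity that links $\kappa^s_4$ on $\wx$ to $\theta$ and $\wz$. The earlier diagrams only treat $\kappa'$ and pass through $\pi^{\ab}$. The simple decoration instead requires tracking $SK_1$ through the transfer along $\alpha$. My first approach would be the Milgram--Oliver diagram (Theorem \ref{thm:kappatwo}) applied to the $R$-lift $\wkappa^s_2$. Concretely, I would write $\kappa^s_4(\wx)$ as a transfer/cap-product with $\theta$ of $\wkappa^s$ evaluated on $\wz$, using the product formula for the twisted product with the $2$-cover defined by $\theta$. Conditions (ii)--(iv) are exactly what is needed to make this twisted product's obstruction land in the $SK_1$-part rather than the $\Wh'$-part.
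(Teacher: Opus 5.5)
Your first half is correct and is essentially the paper's argument. Since $t\in[\tilde\pi,\tilde\pi]$, the map $\tilde\pi^{\ab}\to\pi^{\ab}$ is an isomorphism, and elements conjugate to their inverses map to such elements. So $H^1(\Wh'(\Zhat\tilde\pi))=0$ surjects onto $H^1(\Wh'(\Zhat\pi))$, giving $\lambda_4(\pi)=0$ and hence $\kappa'_4(\pi)=0$ by Theorem E. The detour through Lemma \ref{lem:fivetwo} is unnecessary.

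The second half has a genuine gap. Everything rests on the identity $\rho_2\kappa^s_4(\wx)=\bd(\omega[\wz_1])$ in $L^s_2(\Zhat\pi)$, which you do not prove, and the route you suggest for it cannot work. The $R$-lifts in Proposition \ref{prop:kappafourprime} use the round decoration $\tilde Y$, which already contains $SK_1$. Moreover, $L^{\tilde Y}_*(\Rhat\pi)$ is detected on $\pi^{\ab}$, so that machinery only sees the $\kappa'$-part, which is zero here; the $SK_1$-contribution you need is invisible to it.

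Even granting the identity, step 3 misapplies condition (v). Condition (v) asserts injectivity of $\bd\colon H^1(SK_1(\Zhat\pi))\to L^s_0(\Zhat\pi)$, whose kernel is the image of $L'_1(\Zhat\pi)$. Your identity lives in $L^s_2(\Zhat\pi)$, where the kernel of the boundary map is the image of $L'_3(\Zhat\pi)$, and nothing is assumed about that. (Your worry in step 2 about passing from $SK_1$ to $SK_1\otimes\cy 2$ is legitimate; the paper leaves it implicit too.)

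The missing idea is that one never has to compute $\bar\kappa^s_4(\wx)$ at all. The paper uses the circle bundle over $K(\pi,1)$ determined by $\theta$ to obtain a codimension-two twisting diagram
$a\circ\bar\kappa^s_4=b\circ\bar\kappa^s_2\circ(\cap\,\theta)$, with maps
$a\colon L^s_2(\Zhat\pi)\to L^s_2(\Zhat\tilde\pi\to\Zhat\pi)$ and $b\colon L^s_0(\Zhat\pi)\to L^s_2(\Zhat\tilde\pi\to\Zhat\pi)$.

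The map $b$ sits in the Wall--Ranicki exact sequence
$L^s_0(\Zhat\tilde\pi,\beta,t)\xrightarrow{\alpha_*}L^s_0(\Zhat\pi)\xrightarrow{b}L^s_2(\Zhat\tilde\pi\to\Zhat\pi)$.
Since $H^1(\Wh')$ vanishes for both $\tilde\pi$ and $\pi$, the first two terms are identified with $H^1(SK_1(\Zhat\tilde\pi))$ and $H^1(SK_1(\Zhat\pi))$. Condition (iv) then forces $\alpha_*=0$, so $b$ is injective.

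Next, the Milgram--Oliver diagram together with (v)--(vii) gives $\bar\kappa^s_2(\wz)\neq 0$. By the twisting diagram, $a(\bar\kappa^s_4(\wx))=b(\bar\kappa^s_2(\wz))\neq 0$, hence $\kappa^s_4(\wx)\neq 0$. This is where condition (iv) genuinely enters: as $\alpha_*=0$, which makes $b$ injective, rather than as an informal ``no cancellation'' statement.
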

\begin{proof} Note first that $H^1(Wh'(\Zhat\pi)) = 0$ by condition (iii). Hence $\kappa'_4(\pi) = 0$ by Theorem E and condition (i). Furthermore, $\kappa_2^s(\wz) \neq 0$ by conditions (iv), (v), and Theorem C applied to $\wz = \wx\cap \theta \in H_2(\pi; \cy 2)$.

 We have a central extension
 $1 \to \sigma \to \tilde \pi \to \pi \to 1$,
 such that $\sigma \subset [\tilde\pi, \tilde\pi]$, and by condition (ii) we have $H^0(\tilde\pi^{\ab}) \cong H^0(\pi^{\ab})$ under the projection map.
 The class $\theta \in H^2(\pi; \cy 2)$ defines a circle bundle
 $$ S^1 \to Y \to K(\pi, 1)$$
 inducing the given central extension on fundamental groups (see \cite[pp128-131]{Wall:1970}).  This circle bundle is orientable if and only if  $\theta$ is the image of an integral class,  or equivalently $\beta^*(\theta) = 0$, where $\beta^*\colon H^2(\pi; \cy 2) \to H^3(\pi; \bZ)$ is the Bockstein on cohomology.

 Hence we have a codimension 2 twisting diagram:

 \eqncount
\begin{equation}\label{eq:fiveeleven}
\vcenter{
\xymatrix@C+35pt{H_4(\pi;\cy 2) \ar[d]^{\cap\,\theta}\ar[r]^(0.5){\bar\kappa^s_4} &
L_2^s(\Zhat\pi) \ar[dr]^a&\\
H_2(\pi; \cy 2) \ar[r]^(0.5){\bar\kappa^s_2} & L_0^s(\Zhat\pi)\ar[r]^b&
L_2^s(\Zhat\tilde\pi\to \Zhat\pi)
}
}
\end{equation}
We claim that the map $b$ in this diagram is injective on the image of $\kappa^s_2$ restricted to $H_2(\pi;\bZ)\subseteq H_2(\pi;\cy 2)$ by the Ranicki-Rothenberg comparison sequences. We have an exact sequence
$$ \dots \to L^s_{n+3}(\tilde\pi \to \pi) \to LS_{n}(\Phi) \to L^s_n(\Zhat\pi) \to L^s_{n+2}(\tilde\pi \to \pi) \to \dots$$
arising from the circle bundle (see \cite[p127]{Wall:1970}). In our setting, 
$$LS_{n}(\Phi) = LN_n(\tilde\pi \to \pi) \cong L_n^s(\Zhat\tilde\pi, \beta, t),$$  and this long exact sequence can be identified with the sequence
$$ \dots \to L_3^s(\Zhat\tilde\pi\to \Zhat\pi) \xrightarrow{\ \bd\ } L_0^s(\Zhat\tilde\pi, \beta, t) \xrightarrow{\ \alpha_*\ } L_0^s(\Zhat\pi) \xrightarrow{\ b\ }  L_2^s(\Zhat\tilde\pi\to \Zhat\pi)$$
where $(\Zhat\tilde\pi, \beta, t)$ is the induced twisted antistructure and the middle map is induced by the projection $\alpha\colon \tilde\pi \to \pi$ (see Ranicki \cite[p805,808]{Ranicki:1981}). 
 Under our assumptions, $H^1(Wh'(\Zhat\tilde\pi)) = 0$  by condition (iii), and $\wpi^\ab \cong \pi^\ab$ implies that 
  $H^1(Wh'(\Zhat\pi)) = 0$. It follows that
 $L_0^s(\Zhat\pi) \cong H^1(SK_1(\Zhat\pi))$ and $L_0^s(\Zhat\tilde\pi, \beta, t)  \cong 
 H^1(SK_1(\Zhat\tilde\pi))$ (see  \cite[\S 8]{Hambleton:2000}). Since the map
 $SK_1(\Zhat\tilde\pi)\to SK_1(\Zhat\pi)$ is zero, we conclude that the $\alpha_* = 0$ and $b$ is injective.
 
Since $0 \neq \kappa_2^s(\wz) = \kappa_2^s(i_*(\wz_1)) \in L_0^s(\Zhat\pi)$, it follows that $\kappa_4^s(\wx) \neq 0$ by commutativity of the twisting diagram and the injectivity of the map $b$. 
\end{proof}

\subsection{An example which satisfies the hypothesis of Proposition \ref{prop:compat_pair}
}
We can use GAP to search for examples satisfying the conditions in Definition \ref{def:kappalist}. For finite $2$-groups of orders up to 128, we will use the cohomology calculations of Green and King \cite{Green:2011} (see the webpage \url{https://users.fmi.uni-jena.de/cohomology/}).  The spectral sequence calculations follow the same methods as in previous sections.
\begin{proposition}\label{prop:kappas_ex2} There exists a compatible pair $(\pi, \theta)$ for $\pi = SG[128,1376]$.
\end{proposition}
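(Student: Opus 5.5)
We verify the seven conditions of Definition \ref{def:kappalist} one at a time for $\pi = SG[128,1376]$. We use the data already gathered in Section \ref{sec:three}: $\pi$ occurs in Example \ref{ex:corDgroups} as the quotient in the pair $(1376, 8129)$. Hence $\pi^{\ab}$ is elementary abelian, giving condition (i). The extension $\sigma \rightarrowtail SG[256,8129] \twoheadrightarrow \pi$ satisfies the hypotheses of Theorem \ref{thm:t1} and Theorem \ref{thm:t2}, and $SK_1(\Zhat\pi)\cong \cy 2$. Since $SK_1(\Zhat\pi)$ has order two, $H^1(SK_1(\Zhat\pi)) \cong \cy 2$. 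Theorem \ref{thm:t2} says $\bd\colon H^1(SK_1(\Zhat\pi)) \to L_0^s(\Zhat\pi)$ is non-trivial, so it is injective; this is condition (v).

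Next we must choose $\theta$. The natural first try is the class $\theta \in H^2(\pi;\cy 2)$ classifying the central extension $\wpi = SG[256,8129] \to \pi$, since $\sigma\subseteq[\wpi,\wpi]$ gives condition (ii) directly. The non-surjectivity from Theorem \ref{thm:t1}, combined with $SK_1(\Zhat\pi)\cong\cy 2$, forces $SK_1(\Zhat\wpi)\to SK_1(\Zhat\pi)$ to be zero. That gives condition (iv). For condition (iii) we compute $H^1(\Wh'(\Zhat\wpi))$ with the formula \eqref{eq:fourseven}. The task is to exhibit, for each generator of $\wpi^{\ab}$ (which equals $\pi^{\ab}$ under projection), a product of generators that is conjugate to its inverse in $\wpi$. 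This is done by a GAP check or by explicit conjugation relations, as in Lemma \ref{lem:foursix}. If this $\theta$ fails (iii), we would search the other $\cy 2$-central extensions in GAP for one satisfying (ii)–(iv).

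The remaining and hardest part is conditions (vi) and (vii). By Remark \ref{rem:cohom_form}, condition (vi) means finding $z\in H^2(\pi;\cy 2)$ with $\beta^*(z)\neq 0$ and $z\cup\theta\neq 0$ in $H^4(\pi;\cy 2)$. We will get the mod 2 cohomology ring of $\pi$ from the Green–King tables, or build it from the Lyndon–Hochschild–Serre spectral sequence of the central extension $Z(\pi)\rightarrowtail\pi\twoheadrightarrow\pi^{\ab}$. That spectral sequence has $d_2$ determined by the commutator and squaring relations, as in Section \ref{subsec:fivethree}. We then check, by a Gröbner-style coefficient argument like Lemma \ref{lem:fivefive}, that $z\cup\theta$ is not in the kernel ideal of the edge map. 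We take $\wx$ dual to $z\cup\theta$; then $\wx\cap\theta$ pairs non-trivially with $z$.

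The main obstacle is arranging simultaneously that the class $\wz = \wx\cap\theta$ can be chosen as the reduction of an integral class $\wz_1$, and that $\wz_1$ maps non-trivially to $H_2(\pi;\bZ)/H_2^{\ab}(\pi)\cong SK_1(\Zhat\pi)$, which is condition (vii). Our plan is to identify the generator of $SK_1(\Zhat\pi)\cong\cy 2$ via Oliver's $\Lambda$/$\delta^\alpha$ description. It corresponds to the element $t$ of $\ker\delta^{\alpha}$ not generated by commuting pairs. We then check that its mod 2 reduction pairs non-trivially with $\theta\cup(\,\cdot\,)$ for a suitable degree-2 class. This is the step we expect to need the explicit computer calculation. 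Once (i)–(vii) are verified, the pair $(\pi,\theta)$ is compatible.
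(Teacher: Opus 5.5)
\textbf{Verdict: genuine gap.} Conditions (vi) and (vii), which carry the content of the proposition, are not established, and you have no working criterion for (vii).

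\textbf{What is fine.} Your treatment of conditions (i), (ii), (iv) and (v) is sound and matches the paper. In particular, your remark that a non-surjective homomorphism onto $SK_1(\Zhat\pi)\cong\cy 2$ must vanish is exactly why (iv) follows from Theorem \ref{thm:t1}.

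\textbf{Condition (iii).} This is only promised, and your fallback clause shows the argument is not closed. The paper settles it by explicit inversions in $\tilde\pi=SG[256,8129]$: $\tilde x_3$ has order two, $\tilde x_3\tilde x_4$ inverts $\tilde x_4$, $\tilde x_1\tilde x_2\tilde x_4$ inverts $\tilde x_2\tilde x_3$, and $\tilde x_2\tilde x_4$ inverts $\tilde x_1\tilde x_3$. These four classes span $\tilde\pi^{\ab}\cong(\cy 2)^4$.

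\textbf{The gap in (vi)--(vii).} Three things are missing.
\begin{enumerate}
\item You never write $\theta$ in terms of generators. The paper gets $\theta=[X_1X_2+X_1X_3]$ by comparing $d_2$ in the two Lyndon--Hochschild--Serre spectral sequences, via $\alpha^*(d_2X_5)=d_2\tilde X_5+d_2\tilde X_8$.
\item You choose no $z$, and nothing in your plan links the choice of $z$ to non-membership in $H_2^{\ab}(\pi)$.
\item Your proposed test does not do what you need. Exhibiting some $w\in H^2(\pi;\cy 2)$ with $\la\wz,w\ra\neq 0$ and $\theta\cup w\neq 0$ does not put a prescribed $\wz$ in the image of $\cap\,\theta$. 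That requires $\wz$ to annihilate $\ker(\theta\cup -)$.
\end{enumerate}

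\textbf{The missing idea.} The paper detects $H_2^{\ab}(\pi)$ through $q_*\colon H_2(\pi;\bZ)\to H_2(\pi^{\ab};\bZ)\cong\Lambda^2(\pi^{\ab})$.
\begin{enumerate}
\item The image of $q_*$ is the kernel of the commutator map $\Lambda^2(\pi^{\ab})\to[\pi,\pi]$, namely $\la e_{12}+e_{34},e_{14},e_{24}\ra$.
\item An abelian subgroup only contributes classes $\bar a\wedge\bar b$ with $[a,b]=1$. The commutator relations of $SG[128,1376]$ force these into $\la e_{14},e_{24}\ra$.
\item Hence any integral class whose mod $2$ reduction pairs non-trivially with $z=q^*(X_3X_4)$ is non-zero in $SK_1(\Zhat\pi)$.
\end{enumerate}
With this in hand, the paper only needs $\theta\cup z\neq 0$, a choice of $\wx$ with $\wx\cap(\theta\cup z)\neq 0$, and $Sq^1z\neq 0$ together with $H_2(\pi;\bZ)$ having exponent two to produce $\wz_1$.

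\textbf{Your $\delta^\alpha$ idea.} This can also be made precise. One has $\delta^\alpha(\wz_1)=\la\theta,i_*\wz_1\ra$. The proof of Theorem \ref{thm:t1} shows $\la\Lambda(\alpha)\ra\subseteq\ker\delta^\alpha$, so classes with $\delta^\alpha\neq 1$ survive in $SK_1(\Zhat\pi)$. But that route needs $\la\wx,\theta^2\ra\neq 0$ together with integrality of $\wx\cap\theta$, and you verify neither.
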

\begin{proof}
Let $\tilde \pi $ be the group of order $256$, identified as
$SG[256,8129]$ in the Small Groups Library. We will use the following
presentation of this group
\begin{equation*}
      \tilde  \pi= \left\langle
      \tilde x_1,\tilde x_2,\tilde x_3,\tilde x_4,\tilde x_5,\tilde
x_6,\tilde x_7,\tilde x_8 \vv r\in \tilde R \right\rangle
\end{equation*}
where $\tilde R$ contains the relations
$\tilde x_1^2 = \tilde x_6 \tilde x_7$,
$\tilde x_2^2 = \tilde x_6$,
$\tilde x_4^2 = \tilde x_5$,
$\tilde x_3^2 = \tilde x_5^2=\tilde x_6^2=\tilde x_7^2=\tilde x_8^2=1$,
$[\tilde x_1,\tilde x_2] = \tilde x_8$,
$[\tilde x_1,\tilde x_3] = \tilde x_5 \tilde x_6 \tilde x_8$,
$[\tilde x_1,\tilde x_4] = 1$,
$[\tilde x_2,\tilde x_3] = \tilde x_7$,
$[\tilde x_2,\tilde x_4] = 1$,
$[\tilde x_3,\tilde x_4] = \tilde x_5$, and
$[\tilde x_i,\tilde x_5] = [\tilde x_i,\tilde x_6]=[\tilde x_i,\tilde
x_7]=[\tilde x_i,\tilde x_8]=1$ for every $i\in
\{1,2,3,4,5,6,7\}$.

\smallskip
Let $\pi $ be the group of order $128$, identified as  $SG[128,1376]$ in
the Small Groups Library. This group has the following presentation
\begin{equation*}
       \pi= \left\langle
      x_1,x_2,x_3,x_4,x_5,x_6,x_7 \vv r\in R \right\rangle
\end{equation*}
where $R$ contains the relations
$x_1^2 = x_6 x_7$,
$x_2^2 = x_6$
$x_4^2 = x_5$,
$x_3^2=x_5^2=x_6^2=x_7^2=1$,
$[x_1,x_2] = x_5$,
$[x_1,x_3] = x_6$,
$[x_1,x_4]= 1$,
$[x_2,x_3]= x_7$,
$[x_2,x_4]= 1$,
$[x_3,x_4] = x_5$, and
$[x_i,x_5]=[x_i,x_6]=[x_i,x_7]=1$ for every $i\in
\{1,2,3,4,5,6,7\}$.
Now we define a surjective group homomorphism $\alpha$ from $\tilde \pi
$ to $\pi$ by sending $ \tilde x_1,\tilde x_2,\tilde x_3,\tilde
x_4,\tilde x_5,\tilde x_6,\tilde x_7,\tilde x_8$ to
$       x_1,x_2,x_3,x_4,x_5,x_6,x_7,x_5$ respectively. The kernel of this
group homomorphism is $$\sigma = \langle \tilde x_5\tilde
x_8\rangle\cong \cy 2.$$ Hence  we have a central extension
$$1 \to \sigma \to \tilde \pi \overset{\alpha}{\to} \pi \to 1.$$
Let
$$\tilde V=Z(\tilde \pi) = [\tilde \pi,\tilde \pi]=\left\langle \tilde
x_5,\tilde x_6,\tilde x_7,\tilde x_8 \right\rangle
\cong \cy 2 \times \cy 2 \times \cy 2 \times \cy 2$$
and
    $$V=Z(\pi) = [\pi,\pi]=\left\langle x_5,x_6,x_7 \right\rangle
\cong \cy 2 \times \cy 2 \times \cy 2 $$
and
$$\tilde W=\tilde \pi^{\ab}=\tilde \pi/[\tilde \pi,\tilde
\pi]=\left\langle [\tilde x_1],[\tilde x_2],[\tilde x_3],[\tilde x_4]
\right\rangle \cong \cy 2 \times \cy 2 \times \cy 2 \times \cy 2$$
and
$$W= \pi^{\ab}=\pi/[\pi,\pi]=\left\langle
[x_1],[x_2],[x_3],[x_4]
\right\rangle \cong \cy 2 \times \cy 2 \times \cy 2 \times \cy 2$$
where $\alpha $ induces an isomorphism from $\tilde \pi^{\ab}$
to $\pi^{\ab}$ which sends $[\tilde x_i]$ to $[x_i]$ for all $i$
in $\{1,2,3,4\}$. 
Now we have $gxg^{-1}=x^{-1}$ when 
$$(g,x) \in \left\{(1,\tilde x_3), (\tilde x_3 \tilde x_4, \tilde x_4), (\tilde x_1 \tilde x_2 \tilde x_4, \tilde x_2 \tilde x_3), (\tilde x_2 \tilde x_4, \tilde x_1 \tilde x_3), \right\}.$$
 Hence $[\tilde x_3]$, $[\tilde x_4]$,  $[\tilde x_2\tilde x_3]$, and $[\tilde x_1\tilde x_3]$ are all zero in   $H^1(\Wh'(\hat{\bZ}_2\tilde \pi))$, and  we have
$$H^1(\Wh'(\hat{\bZ}_2\tilde \pi))\cong H^1(\Wh'(\hat{\bZ}_2\pi)) \cong
0.$$
Therefore condition (iii) holds, and condition (iv) is immediate from the GAP computations in Appendix \ref{sec:apptwo},  which show that  $SK_1(\Zhat\pi) \cong \cy 2$. Condition (v) follows by applying Theorem \ref{thm:t1} and  Theorem \ref{thm:t2} (see List L2).

Now we get the corresponding Lyndon-Hochschild-Serre spectral sequences
with
\begin{equation*}
     \begin{aligned}
             d_2(\tilde X_5)  & = \tilde X_1 \tilde X_3 + \tilde
X_3\tilde X_4 + \tilde X_4^2 \\
             d_2(\tilde X_6)  & =  \tilde X_1^2 + \tilde X_1\tilde X_3
+ \tilde X_2^2  \\
             d_2(\tilde X_7)  & =  \tilde X_1^2 + \tilde X_2\tilde X_3
\\
             d_2(\tilde X_8)  & = \tilde X_1\tilde X_2  +   \tilde
X_1\tilde X_3.
     \end{aligned}
\end{equation*}
and
\begin{equation*}
     \begin{aligned}
             d_2(X_5)  & =  X_1X_2 + X_3X_4 + X_4^2\\
             d_2(X_6)  & =  X_1^2 + X_1X_3 + X_2^2\\
             d_2(X_7)  & =  X_1^2 + X_2X_3\\
     \end{aligned}
\end{equation*}
Assume that $I$ denotes the ideal generated by $d_2(X_5), d_2(X_6),
d_2(X_7)$ that is closed under Steenrod operations.  Notice that
$\alpha^*$ sends $d_2(X_5)$ to $d_2(\tilde X_5) + d_2(\tilde X_8)$.
Hence
$$\theta = [X_1X_2 + X_1X_3]\in H^2(\pi ;\cy 2)$$
is the cohomology class that corresponds to the central extension $$1
\to \sigma \to \tilde \pi \to \pi \to 1.$$ Now take
$$z=[X_3X_4]\in H^2(\pi ;\cy 2).$$
Then $\theta \cup z \neq 0$ in $H^4(\pi ;\cy 2)$, and pick  $\hat x\in H_4(\pi;\cy 2)$ such that $\hat x \, \cap \,  (\theta \,\cup\, z)\neq 0$. Then $(\hat x \, \cap \,  \theta)\, \cap \,  z=1$, and for  the class $\hat z = \hat x \, \cap \, \theta$  we have  
$ \hat z\, \cap \, z=\hat z \, \cap \,  [X_3X_4]=1 $.

\smallskip
    Now $H_2(\pi;  \bZ)$ has exponent two  (by a GAP computation) and $\mathrm{Sq}^1(z)\neq 0$ (by using the GAP package SINGULAR), so 
    $$0 \neq \beta^*(z) \in H^3(\pi; \bZ) \cong \Hom_\bZ(H_2(\pi, \bZ), \bbQ/\bZ).$$
 By duality, there exists a unique class $\hat z_1\in H_2(\pi; \bZ)$ such that $i_*(\hat z_1) = \hat z$.

. 
Next we claim that the class $\hat z_1$ is not in the subgroup $H_2^{\ab}(\pi) \subseteq H_2(\pi, \bZ)$. Let $q_*\colon H_2(\pi ;\bZ)\to H_2(\pi ^{\ab} ;\bZ) $ denote the natural map induced by the quotient $q\colon \pi \to \pi^{\ab}$. Then we have the formula
$$H_2(\pi^{\textup{ab}};\bZ)=\bigoplus_{1\leq i<j\leq 4}\cy 2\cdot e_{ij} \cong(\cy 2)^6.$$
The image of $q_*$  is
generated by $\{ e_{12}+e_{34}, e_{14}, e_{24}\}$, and only the basis elements  $e_{14}$ and $e_{24}$
are in the image under $q_*$ of $H_2^{\textup{ab}}(\pi)$,  since $\la x_1, x_4\ra \cong \cy 4 \times \cy 4$ and $\la x_2, x_4\ra \cong \cy 4 \times \cy 4$, but $[x_1, x_2] = [x_3,x_4] = x_5$.

Since $z= [X_3X_4] \in H^2(\pi;\cy 2)$, we have $z = q^*(X_3X_4)$ and hence
 $$ q_*(\hat z) \, \cap \, (X_3X_4) =  \hat z \, \cap \, q^*(X_3X_4)=  \hat z \, \cap \, [X_3X_4] = 1.$$
 We have $(e_{12}+e_{34})\cap (X_3X_4)=1$, $e_{14}\cap (X_3X_4)=0$, $e_{24}\cap (X_3X_4)=0$. Hence $q_*(\hat z_1)=(e_{12}+e_{34}) +c_1e_{14} + c_2e_{24}$ for some constants $c_1$, $c_2$. Therefore, $\hat z_1  \notin H_2^{\ab}(\pi)$ and we have $\omega ([\hat z_1])\neq 0$ as required for condition (vii).
We refer to the webpage:

\smallskip
\url{https://users.fmi.uni-jena.de/cohomology/128web/128gp1376.html}
 
\smallskip
\noindent
for information about the cohomology ring of $\pi=SG[128,1376]$. 
\end{proof}

\begin{remark}\label{rem:moreex} The list of groups in Example \ref{ex:thmEgroups} is a possible source for further examples. Conditions (iii), (iv) and (v) in Definition \ref{def:kappalist} were checked explicitly in Section \ref{subsec:k2nonzero} for the group $\pi = SG[128, 1377]$ and its 2-extension $\wpi = SG[256, 8177]$. Using the methods above, it is not hard to verify the remaining conditions to conclude that $\kappa^s_4(\pi) \neq 0$, but $\kappa'_4(\pi) = 0$.
\end{remark}

\appendix
\section{Some GAP computations for $H^1(\Wh'(\Zhat G))$}\label{sec:appone}
The following code takes a group $G$ as input and it outputs
$H^1(\Wh'(\Zhat G))$. It uses the fact that
$$H^1(\Wh'(\hat{\bZ}_2G))\cong \frac{\{ g\in G_{\ab}  \vv
      g^2=1 \}}{\langle g \vv g\sim g^{-1}\rangle }\cong \frac{S}{C}$$
where $S=\{ g\in G  \vv
g^2\in[G,G] \}$  and $C=\langle g\in S \vv g\sim g^{-1}\text{ or } g\in
[G,G]\rangle$
Here the variable \texttt{Sq\_in\_Comm} is loaded with a list of
elements that generate $S$ and the variable \texttt{Conj\_to\_inverse}
is loaded with elements that generate the group $C$.

\medskip
\begin{lstlisting}[language=GAP,
	basicstyle=\ttfamily\small,
	frame=single,               
	breaklines=true,            
	columns=fullflexible,      
	keepspaces=true,             
	caption={Function for computing
		$H^1(\Wh'(\Zhat G))$} ]
H1_Wh_prime:=function(G)
local Comm_G,g,Sq_in_Comm,Conj_to_inverse;
      Comm_G:=CommutatorSubgroup(G,G);
      Sq_in_Comm:=[];
      Conj_to_inverse:=List(Comm_G);
      for g in G do
          if not(g^2 in Comm_G) then continue; fi;
          Add(Sq_in_Comm,g);
          if (g^(-1) in g^G) then Add(Conj_to_inverse,g); fi;
      od;
      return Group(Sq_in_Comm)/Group(Conj_to_inverse);
end;;
      \end{lstlisting}
Here we can obtain a list of groups $G$ with nonzero  $H^1(\Wh'(\Zhat
G))$ as follows:
\begin{lstlisting}[language=GAP,
	basicstyle=\ttfamily\small,
	frame=single,               
	breaklines=true,            
	columns=fullflexible,      
	keepspaces=true,             
	caption={Code for listing groups with  nonzero
		$H^1(\Wh'(\Zhat G))$} ]
for n in [2,4,8,16,32,64,128] do
   for i in [1..NumberSmallGroups( n )] do
      G:=SmallGroup(n,i);
      H:=H1_Wh_prime(G);
      if Size(H)>1 then Print("[",n,",",i,",",Rank(H1_Wh_prime(G)),"], "); fi;
   od;
od;
      \end{lstlisting}
The result of the above listing is given as below. Note that
$\texttt{[s,i,r]}$ being in the list means $H^1(\Wh'(\Zhat
SG[s,i]))\cong (\cy 2)^{\oplus r} $.
\begin{lstlisting}[language=GAP,
	basicstyle=\ttfamily\small,
	frame=single,               
	breaklines=true,            
	columns=fullflexible,      
	keepspaces=true,             
	caption={The output of above listing} ]
[32,4,1], [32,13,1], [64,3,1], [64,15,1], [64,27,1], [64,28,1], [64,46,1], 
[64,48,1], [64,57,1], [64,62,1], [64,63,1], [64,64,2], [64,78,1], [64,81,1], 
[64,82,3], [64,84,1], [64,106,1], [64,113,1], [64,162,1], [64,164,1],
[128,27,1], [128,43,1], [128,44,1], [128,45,1], [128,49,1], [128,56,1], 
[128,76,1], [128,85,1], [128,99,1], [128,100,1], [128,101,1], [128,104,1], 
[128,129,1], [128,130,1], [128,156,1], [128,158,1], [128,166,1], [128,172,1],
[128,174,1], [128,175,2], [128,177,1], [128,180,1], [128,183,1], [128,184,1], 
[128,294,1], [128,298,1], [128,300,1], [128,302,1], [128,340,1], [128,348,1],
[128,457,1], [128,458,1], [128,476,1], [128,477,2], [128,478,2], [128,479,1], 
[128,481,1], [128,484,1], [128,499,1], [128,502,1], [128,506,1], [128,539,1],
[128,549,1], [128,550,1], [128,551,1], [128,552,1], [128,553,1], [128,554,1], 
[128,555,1], [128,556,1], [128,557,1], [128,558,1], [128,559,1], [128,560,1], 
[128,561,1], [128,562,1], [128,563,1], [128,564,1], [128,565,1], [128,566,1], 
[128,567,2], [128,568,1], [128,569,1], [128,570,1], [128,571,2], [128,572,3],
[128,573,2], [128,576,1], [128,578,1], [128,585,1], [128,649,1], [128,651,1], 
[128,652,1], [128,656,1], [128,658,1], [128,659,1], [128,680,1], [128,762,1],
[128,766,1], [128,767,1], [128,772,1], [128,803,1], [128,804,1], [128,805,2], 
[128,806,2], [128,807,1], [128,808,1], [128,809,2], [128,810,1], [128,811,2],
[128,812,1], [128,813,1], [128,814,1], [128,816,1], [128,817,1], [128,824,1], 
[128,826,1], [128,828,1], [128,829,1], [128,831,1], [128,832,1], [128,833,2],
[128,834,1], [128,835,1], [128,836,3], [128,889,1], [128,895,1], [128,897,1], 
[128,898,1], [128,965,1], [128,967,1], [128,970,1], [128,971,1], [128,999,1],
[128,1011,1], [128,1013,1], [128,1014,2], [128,1015,1], [128,1122,1], 
[128,1126,1], [128,1132,3], [128,1133,1], [128,1215,1], [128,1235,1], 
[128,1236,1], [128,1303,1], [128,1602,1], [128,1639,1], [128,1650,1], 
[128,1818,1], [128,1821,1]
\end{lstlisting}

\section{Some GAP computations for $H^1(SK_1(\Zhat\pi))$}\label{sec:apptwo}
Given a group $2$-group $G$, the following function find elements $g$ in the commutator group $[G,G]$ such that the order of $g$ is $2$ and $ \langle g \rangle { \rightarrowtail  }
G  \overset{ \alpha }{ \twoheadrightarrow }  G/\langle g \rangle  $ is a
central group extension and $g$ is not in  $\Omega (\alpha)  = \left\{
\,[\widetilde{g}_1,\widetilde{g}_2]\,\, \left| \,\,
\widetilde{g}_1,\widetilde{g}_2\in G \textup{ and }
\alpha([\widetilde{g}_1,\widetilde{g}_2])=1\, \right. \right\}$. Hence by  Theorem \ref{thm:t1}, if we set $\pi := G/\langle g \rangle$,  we obtain examples of   finite $2$-groups of order $\leq 128$ with
$SK_1(\Zhat \pi)\neq 0$. The function also determines if the extra condition in Theorem \ref{thm:t2} is
satisfied.

\medskip
\begin{lstlisting}[language=GAP,
	basicstyle=\ttfamily\small,
	frame=single,               
	breaklines=true,            
	columns=fullflexible,      
	keepspaces=true,             
	caption={Function listing quotients $\pi$ of a group $G$ such that $SK_1(\Zhat \pi)\neq 0$} ]
Some_quotients_with_nonzero_SK1:=function(G)
local List_of_commutators_in_G, ZG_cap_CommG, g, h, List_of_subgroups,
List_with_extra_cond, extra_condition_holds, id_gr, h_tilde, h_tilde_set,
hom_from_cover, G_quotient_g, there_exist_lift;
List_of_commutators_in_G:=[];
for g in G do
 for h in G do
List_of_commutators_in_G:=Union(List_of_commutators_in_G,[g^(-1)*h^(-1)*g*h]);
 od;
od;
List_of_subgroups:=[];
List_with_extra_cond:=[];
ZG_cap_CommG:=Intersection(Center(G),CommutatorSubgroup(G,G));
for g in ZG_cap_CommG do
 if not(Order(g)=2) then continue; fi;
 if g in List_of_commutators_in_G then continue; fi;
 id_gr:=IdGroup(G/Group(g));
 if id_gr in List_of_subgroups then continue; fi;
 Add(List_of_subgroups,IdGroup(G/Group(g)));
 hom_from_cover:=NaturalHomomorphismByNormalSubgroup(G,Group(g));
 G_quotient_g:=Image(hom_from_cover);
 extra_condition_holds:=true;
 for h in G_quotient_g do
  if h^(-1) in h^G_quotient_g then
   h_tilde_set:=PreImages(hom_from_cover,h);
       there_exist_lift:=false;
       for h_tilde in h_tilde_set do
        if h_tilde^(-1) in h_tilde^G then
         there_exist_lift:=true;
         break;
        fi;
       od;
       extra_condition_holds := extra_condition_holds and there_exist_lift;
      fi;
  od;
  if extra_condition_holds then
      List_with_extra_cond:=Union(List_with_extra_cond,[id_gr]);
  fi;
od;
return [List_of_subgroups,List_with_extra_cond];
end;;
\end{lstlisting}
Then one can find the list L1 of all $2$-groups $\pi$ of  order up to 128, which satisfy the criterion of Theorem \ref{thm:t1}, and hence have  $SK_1(\Zhat\pi)$
is nonzero,  The list L2 contains those groups in L1 which also satisfy the extra condition in Theorem \ref{thm:t2}.

\medskip
\begin{lstlisting}[language=GAP,
	basicstyle=\ttfamily\small,
	frame=single,               
	breaklines=true,            
	columns=fullflexible,      
	keepspaces=true,             
	caption={Code for generating the lists L1 and L2 as above} ]
L1:=[];
L2:=[];
List_of_2_covers:=[];
for size_G in [8,16,32,64,128,256] do
for index_G in [1..NumberSmallGroups( size_G )] do
 G:=SmallGroup(size_G,index_G);
 Result_of_func:=Some_quotients_with_nonzero_SK1(G);
 List_with_SK1_nonzero:=Result_of_func[1];
 List_which_satisfy_extra_condition:=Result_of_func[2];
 if Size(List_with_SK1_nonzero)=0 then continue; fi;
 Add(List_of_2_covers,[size_G,index_G,List_with_SK1_nonzero]);
 L1:=Union(L1,List_with_SK1_nonzero);
 L2:=Union(L2,List_which_satisfy_extra_condition);
od;
od;
\end{lstlisting}
Here we display the results of the above  code.
\begin{lstlisting}[language=GAP,
	basicstyle=\ttfamily\footnotesize,
	frame=single,               
	breaklines=true,            
	columns=fullflexible,      
	keepspaces=true,            
	caption={The output of above listing for L1 and L2} ]
gap> L1;
[ [ 64, 149 ], [ 64, 150 ], [ 64, 151 ], [ 64, 170 ], [ 64, 171 ], [ 64, 172 ], 
[ 64, 177 ], [ 64, 178 ], [ 64, 182 ], [ 128, 36 ], [ 128, 37 ], [ 128, 38 ], 
[ 128, 39 ], [ 128, 40 ], [ 128, 41 ], [ 128, 138 ], [ 128, 139 ], [ 128, 144 ], 
[128, 145 ], [ 128, 227 ], [ 128, 228 ], [ 128, 229 ], [ 128, 242 ], [128, 243 ], 
[ 128, 244 ], [ 128, 245 ], [ 128, 246 ],[ 128, 247 ], [ 128, 265 ], [ 128, 266 ], 
[ 128, 267 ], [ 128, 268 ], [128, 269 ], [ 128, 287 ], [ 128, 288 ], [ 128, 289 ], 
[ 128, 290 ], [ 128, 291 ], [ 128, 292 ], [ 128, 293 ], [ 128, 301 ], [ 128, 324 ], 
[ 128, 325 ], [ 128, 326 ], [ 128, 417 ], [ 128, 418 ], [ 128, 419 ], [ 128, 420 ], 
[ 128, 421 ], [ 128, 422 ], [ 128, 423 ], [ 128, 424 ], [ 128, 425 ], [ 128, 426 ],
[ 128, 427 ], [ 128, 428 ], [ 128, 429 ], [ 128, 430 ], [ 128, 431 ], [ 128, 432 ], 
[ 128, 433 ], [ 128, 434 ], [ 128, 435 ], [ 128, 436 ], [ 128, 446 ], [ 128, 447 ], 
[ 128, 448 ], [ 128, 449 ], [ 128, 450 ], [ 128, 451 ], [ 128, 452 ], [ 128, 453 ], 
[ 128, 454 ], [ 128, 455 ], [ 128, 541 ], [ 128, 543 ], [ 128, 568 ], [ 128, 570 ], 
[ 128, 579 ], [ 128, 581 ], [ 128, 626 ], [ 128, 627 ], [ 128, 629 ], [ 128, 667 ], 
[ 128, 668 ], [ 128, 670 ], [ 128, 675 ], [ 128, 676 ], [ 128, 678 ], [ 128, 691 ], 
[ 128, 692 ], [ 128, 693 ], [ 128, 695 ], [ 128, 703 ], [ 128, 704 ], [ 128, 705 ], 
[ 128, 707 ], [ 128, 724 ], [ 128, 725 ], [ 128, 727 ], [ 128, 950 ], [ 128, 951 ], 
[ 128, 952 ], [ 128, 975 ], [ 128, 976 ], [ 128, 977 ], [ 128, 982 ], [ 128, 983 ],
[ 128, 987 ], [ 128, 1345 ], [ 128, 1346 ], [ 128, 1347 ], [ 128, 1348 ], 
[ 128, 1349 ], [ 128, 1350 ], [ 128, 1351 ], [ 128, 1352 ], [ 128, 1353 ], 
[ 128, 1354 ], [ 128, 1355 ], [ 128, 1356 ], [ 128, 1357 ], [ 128, 1358 ], 
[ 128, 1359 ], [ 128, 1360 ], [ 128, 1361 ], [ 128, 1362 ], [ 128, 1363 ], 
[ 128, 1364 ], [ 128, 1365 ], [ 128, 1366 ], [ 128, 1367 ], [ 128, 1368 ], 
[ 128, 1369 ], [ 128, 1370 ], [ 128, 1371 ], [ 128, 1372 ], [ 128, 1373 ], 
[ 128, 1374 ], [ 128, 1375 ], [ 128, 1376 ], [ 128, 1377 ], [ 128, 1378 ], 
[ 128, 1379 ], [ 128, 1380 ], [ 128, 1381 ], [ 128, 1382 ], [ 128, 1383 ], 
[ 128, 1384 ], [ 128, 1385 ], [ 128, 1386 ], [ 128, 1387 ], [ 128, 1388 ], 
[ 128, 1389 ], [ 128, 1390 ], [ 128, 1391 ], [ 128, 1392 ], [ 128, 1393 ], 
[ 128, 1394 ], [ 128, 1395 ], [ 128, 1396 ], [ 128, 1397 ], [ 128, 1398 ], 
[ 128, 1399 ], [ 128, 1544 ], [ 128, 1545 ], [ 128, 1546 ], [ 128, 1547 ], 
[ 128, 1548 ], [ 128, 1549 ], [ 128, 1550 ], [ 128, 1551 ], [ 128, 1552 ],
[ 128, 1553 ], [ 128, 1554 ], [ 128, 1555 ], [ 128, 1556 ], [ 128, 1557 ], 
[ 128, 1558 ], [ 128, 1559 ], [ 128, 1560 ], [ 128, 1561 ], [ 128, 1562 ], 
[ 128, 1563 ], [ 128, 1564 ], [ 128, 1565 ], [ 128, 1566 ], [ 128, 1567 ], 
[ 128, 1568 ], [ 128, 1569 ], [ 128, 1570 ], [ 128, 1571 ], [ 128, 1572 ], 
[ 128, 1573 ], [ 128, 1574 ], [ 128, 1575 ], [ 128, 1576 ], [ 128, 1577 ], 
[ 128, 1783 ], [ 128, 1784 ], [ 128, 1785 ], [ 128, 1786 ], [ 128, 1864 ], 
[ 128, 1865 ], [ 128, 1866 ], [ 128, 1867 ], [ 128, 1880 ], [ 128, 1881 ], 
[ 128, 1882 ], [ 128, 1893 ], [ 128, 1894 ], [ 128, 1903 ], [ 128, 1904 ], 
[ 128, 1924 ], [ 128, 1925 ], [ 128, 1926 ], [ 128, 1927 ], [ 128, 1928 ], 
[ 128, 1929 ], [ 128, 1945 ], [ 128, 1946 ], [ 128, 1947 ], [ 128, 1948 ], 
[ 128, 1949 ], [ 128, 1950 ], [ 128, 1951 ], [ 128, 1966 ], [ 128, 1967 ], 
[ 128, 1968 ], [ 128, 1969 ], [ 128, 1970 ], [ 128, 1971 ], [ 128, 1972 ], 
[ 128, 1983 ], [ 128, 1984 ], [ 128, 1985 ], [ 128, 1986 ], [ 128, 1987 ],
[ 128, 1988 ] ]
gap> L2;
[ [ 128, 287 ], [ 128, 288 ], [ 128, 290 ], [ 128, 568 ], [ 128, 579 ], [ 128, 667 ], 
[ 128, 668 ], [ 128, 670 ], [ 128, 676 ], [ 128, 692 ], [128, 693 ], [ 128, 703 ], 
[ 128, 704 ], [ 128, 725 ], [ 128, 1375 ], [ 128, 1376 ], [ 128, 1377 ], 
[ 128, 1547 ], [ 128, 1549 ], [ 128, 1576 ] ]
\end{lstlisting}

Then one can compute the actual $SK_1(\Zhat \pi)$  by the following
function.  

\medskip
\begin{lstlisting}[language=GAP,
	basicstyle=\ttfamily\small,
	frame=single,               
	breaklines=true,            
	columns=fullflexible,      
	keepspaces=true,
	caption={Function that computes  $SK_1(\Zhat \pi)$ for a given group $\pi$}]
SK1:=function(G)
local hom_from_SC_to_G,SC,K,Comm_SC,List_of_comm,s1,s2,h,SC_b,K_b,iso;
hom_from_SC_to_G:=EpimorphismSchurCover(G);
SC_b:=Source(hom_from_SC_to_G);
K_b:=Kernel(hom_from_SC_to_G);
iso := IsomorphismPermGroup( SC_b );
SC := Image( iso );
K:=Image( iso , K_b);
Comm_SC:=CommutatorSubgroup(SC,SC);
List_of_comm:=[];
for s1 in SC do
for s2 in SC do
h:=s1*s2*(s1^(-1))*(s2^(-1));
if not(h in List_of_comm) and (h in K) then
Add(List_of_comm,h);
fi;
od;
od;
return IdGroup(Intersection(Comm_SC,K)/Group(List_of_comm));
end;;
\end{lstlisting}

\section{Some GAP code to test Conjecture \ref{conj:subquotients}}\label{app:appthree}
\begin{lstlisting}[language=GAP,
	basicstyle=\ttfamily\small,
	frame=single,               
	breaklines=true,            
	columns=fullflexible,      
	keepspaces=true,
	caption={Code for finding examples that satisfy the hypothesis of the conjecture.}]
LoadPackage("HAP");
for n in [2,4,8,16,32] do
 for i in [1..NumberSmallGroups( n )] do
  G:=SmallGroup(n,i);
  if IsAbelian(G) then continue; fi;
  Sb_G:=List(ConjugacyClassesSubgroups(G), Representative);
  Normal_Sb_G:=Filtered(Sb_G, x -> IsNormal(G,x));
  Cyclic_Quo:=Filtered(Normal_Sb_G, x -> IsCyclic(G/x));
  conj_G:=List(ConjugacyClasses(G),Representative);
  for N in Cyclic_Quo do
   if  Size(G)<4*Size(N) then continue; fi;
   W:=Filtered(Cyclic_Quo, x -> IsSubgroup(x,N) and Size(G)=2*Size(x))[1];
   T:=Filtered(Cyclic_Quo, x -> IsSubgroup(x,N) and Size(x)=2*Size(N))[1];
   p:=NaturalHomomorphismByNormalSubgroup(G,N);
   G_N:=Image(p);
   poi:=GroupHomomorphismByFunction(W,G_N,x->Image(p,x));;
   H_2_poi:=GroupHomology(poi,2,2);
   if Size(Image(H_2_poi))=1 then continue; fi;
   H_4_poi:=GroupHomology(poi,4,2);
   if Size(Image(H_4_poi))>1 then continue; fi;
   H_4_p:=GroupHomology(p,4,2);
   if Size(Image(H_4_p))=1 then continue; fi;
   conj_cl:=Filtered(conj_G,x -> (x in T) and not(x in N));
   Print("\n","A sequence N < T <= W < G = SG[",n,",",i,"]",
     " that satisfy hypothesis of conjecture is \n", [N,T,W,G],"\n",
     "For this sequence the number of conjugacy classes in T-N is ",
     Size(conj_cl),"\n", "They are represented by the elements: ",
     conj_cl,"\n"); 
  od;  
 od;
od;
\end{lstlisting}
The GAP code above shows that smallest groups that contains such a increasing sequence are $SG[32,8]$, $SG[64,13]$ and $SG[64,14]$. In these groups, each such increasing sequence of subgroups $T - N$ has an odd number of $\pi$-conjugacy classes. However, this is not new information about $\kappa_4'$ for these groups, since $H^1(\Wh'(\Zhat\pi)) = 0$.

%%%%%%%%%%%%%%%%%%%%%%%
%%%%%%%%%%%%%%%%%%%%%%%%%%
%\bibliographystyle{ih}
%\bibliography{kappa}
%\end{document}
%%%%%%%%%%%%%%%%%%%%%%%%%%%%%
\providecommand{\bysame}{\leavevmode\hbox to3em{\hrulefill}\thinspace}
\providecommand{\MR}{\relax\ifhmode\unskip\space\fi MR }
% \MRhref is called by the amsart/book/proc definition of \MR.
\providecommand{\MRhref}[2]{%
  \href{http://www.ams.org/mathscinet-getitem?mr=#1}{#2}
}
\providecommand{\href}[2]{#2}

\end{document}